\definecolor{color1}{RGB}{27,158,119}
\definecolor{color2}{RGB}{217,95,2}
\definecolor{color3}{RGB}{117,112,179}
\definecolor{color4}{RGB}{231,41,138}
\newtheorem{theorem}{Theorem}[section]
\newtheorem{lemma}[theorem]{Lemma}
\newtheorem{proposition}[theorem]{Proposition}
\theoremstyle{definition}
\theoremstyle{remark}
\newtheorem{remark}[theorem]{Remark}
\theoremstyle{remark}
\numberwithin{equation}{section}
\DeclareMathOperator\R{\mathbb R}
\DeclareMathOperator\C{\mathbb C}
\title[Ground States of NLS on the Tadpole Graph with a Repulsive Delta]{Ground States of the Nonlinear Schrödinger Equation on the Tadpole Graph with a Repulsive Delta Vertex Condition}
\author[R.~Duboscq]{Romain Duboscq}
\author[\'E.~Durand-Simonnet]{Élio Durand-Simonnet}
\author[S.~Le Coz]{Stefan Le Coz}
\thanks{This work was supported by the ANR LabEx CIMI (grant ANR-11-LABX-0040) within the French State Programme "Investissements d'Avenir" and the ANR project NQG ANR-23-CE40-0005.}
\address[Romain Duboscq]{Institut de Math\'ematiques de Toulouse; UMR5219,
  \newline\indent
  Universit\'e de Toulouse; CNRS,
  \newline\indent
  UPS IMT, F-31062 Toulouse Cedex 9,
  \newline\indent
  France}
\email[Romain Duboscq]{romain.duboscq@math.univ-toulouse.fr}
\address[Élio Durand-Simonnet and Stefan Le Coz]{Institut de Math\'ematiques de Toulouse; UMR5219,
  \newline\indent
  Universit\'e de Toulouse; CNRS,
  \newline\indent
  UPS IMT, F-31062 Toulouse Cedex 9,
  \newline\indent
  France}
  \email[Élio Durand-Simonnet]{elio.durand\_simonnet@math.univ-toulouse.fr}
\email[Stefan Le Coz]{slecoz@math.univ-toulouse.fr}
\subjclass[2020]{}
\date{\today}
\keywords{}
\begin{document}

\begin{abstract}
We consider the stationary nonlinear Schr\"odinger equation set on a tadpole graph with a repulsive delta vertex condition between the loop and the tail of the tadpole. We establish the existence of an action ground state when the size of the loop is either very small or very large. Our analysis relies on variational arguments, such as profile decomposition. When it exists, we study the shape of the ground state using ordinary differential equations arguments, such as the study of period functions. The theoretical results are completed with a numerical study.
\end{abstract}

\maketitle

\section{Introduction}

In the last fifteen years, the study of nonlinear Schr\"odinger equations set on metric graphs 
has gained an incredible momentum. One of the main issues under investigation is the existence and properties of stationary states, which can be obtained in various ways, e.g. as solutions of coupled ordinary differential equations, as minimizers of the Schr\"odinger energy at fixed mass, as minimizer of the action, etc. In general, the setting is either a general graph with the classical Kirchhoff conditions at the vertices, or a star graph with more generic conditions on the vertex such as $\delta$ or $\delta'$-vertex conditions. Outside the case of the star graph, the influence of the vertex condition on the analysis of stationary states has rarely been considered, and the goal of the present study is to analyze this influence in other situations, in particular in the case of a graph with a compact core to which  infinite half-lines are attached. The simplest possible example of such a graph is the one of the tadpole graph, and it is the one which will be studied in the present paper. 

The \emph{tadpole} (or \emph{lasso}) graph is a graph formed by one vertex and two  edges, one loop connecting the vertex to itself, and a infinite edge starting from the vertex (see Figure \ref{figTadpole}). The tadpole graph is a model case for the study of the wave propagation on metric graphs. One of the first mathematical studies of the tadpole graph in the context of quantum graph was performed by \cite{Ex97}, motivated by physical models of metal loops connected to electron reservoirs by a single wire, see \cite{Bu85,JaSi94}. 

As for quantum graphs in general, the first challenge posed by the tadpole graph is the understanding of the linear operator set on the graph and its associated dynamics. 
In \cite{Ex97}, Exner analyses the spectral properties of the Laplacian on the tadpole graph with generic vertex conditions and a magnetic field on the loop. 
Dispersive effects on the tadpole graph and the shrinking circle limit were investigated by Ali Mehmeti, Ammari and Nicaise in \cite{AlAmNi17}. 
The linear wave propagation on the damped tadpole graph was analyzed by Ammari, Assel and Dimassi \cite{AmAs24,AmAsDi24} while the linear Schr\"odinger and Airy propagation on looping-edge graphs (generalized tadpoles) were considered by Angulo Pava and Mu\~noz \cite{AnMu24}. 

From the nonlinear point of view, the tadpole graph serves as a model case for a graph with a compact core and semi-infinite edges. As such, it was used by Dovetta and Tarentelli \cite{DoTa20} for the study of the existence of ground states in the mass-critical case when the nonlinearity is concentrated on the compact core (i.e the tadpole loop). Noja, Pelinovski and Shaikhova investigated the bifurcation picture for standing waves (see \cite{NoPeSh15}) and their variational characterization in the $L^2$-critical case (see \cite{NoPe20}). Existence of an energy ground state on a tadpole graph with a Kirchhoff condition has been proved in \cite{AsSeTi15}. More recently, the stability of two-lobe states on the tadpole graph for nonlinear Schr\"odinger equations was considered in \cite{Pa24}. 

Few works are devoted to the nonlinear dynamics. The asymptotic behavior (modified scattering) of symmetric solutions of the cubic nonlinear Schr\"odinger equation on the tapdole graph was established in \cite{Se25}. Working toward a better understanding of  the nonlinear Schr\"odinger dynamics is one of the main existing challenges for the analysis on metric graphs.  

While we restrict the study in the present paper to the elementary tadpole described below, the tadpole graph can be generalized to the so-called flower graph, which is made of a line and several loops attached at the same vertex (see \cite{KaMaPeXi21}). There is now abundant literature on stationary states of nonlinear quantum graphs; we refer to \cite{KaNoPe22} for an overview of the existing results and references. 

In this work, we will be interested in the existence of action ground states on the tadpole graph equipped with a delta condition at the vertex. Our objective is to understand, in this model case, how the delta condition at the vertex influences the existence and shape of standing wave solutions.

Before stating our main result, we introduce some notation (some of which will be made more precise in Section \ref{secPreliminaries}). 

We consider a tadpole graph $\mathcal G_L$ with a compact edge of length $2L$. We equip it with a second order derivative operator $\operatorname{H}_\gamma$ incorporating $\delta$-vertex conditions associated to a parameter $\gamma \in\mathbb R$, i.e. formally $\operatorname{H}_\gamma = -\partial_{xx} + \gamma \delta(\cdot - \operatorname{v})$. We denote by $Q_{\operatorname{H}_\gamma}$ its associated quadratic form. See Section \ref{secPreliminaries} for more details about the Hamiltonian operator.

Let $\omega > 0$. We consider the cubic stationary nonlinear Schrödinger equation with a repulsive non-linearity on $\mathcal{G}_L$, given by
\begin{equation} \label{eqNLS}
    \operatorname{H}_\gamma u + \omega u - |u|^2u = 0.
\end{equation}

We define the \emph{action} $S_{\omega, \gamma, L}$ and the \emph{Nehari functional} $I_{\omega, \gamma, L}$ for $v \in D(Q_{\operatorname{H}_\gamma})$ by
\begin{align}
    S_{\omega, \gamma, L} (v) & = \frac{1}{2} Q_{\operatorname{H}_\gamma}(v) + \frac{\omega}{2} \|v\|_{L^2(\mathcal{G}_L)}^2 - \frac{1}{4} \|v\|_{L^4(\mathcal{G}_L)}^4, \label{eqAction} \\
    I_{\omega, \gamma, L} (v) & = Q_{\operatorname{H}_\gamma}(v) + \omega \|v\|_{L^2(\mathcal{G}_L)}^2 - \|v\|_{L^4(\mathcal{G}_L)}^4, \label{eqNehariFunctional}
\end{align}
and the \emph{Nehari manifold} $\mathcal N_{\omega, \gamma, L}$ by
\begin{equation}
    \mathcal N_{\omega, \gamma, L} = \left\{ v \in D \left( Q_{\operatorname{H}_\gamma} \right) : v \not\equiv 0 \text{ and } I_{\omega, \gamma, L} (v) = 0 \right\}. \label{eqNehariManifold}
\end{equation}
An \emph{action ground state} $u$ is a solution of the variational problem
\begin{equation} \label{eqMinAction}
    s_{\omega, \gamma, L} = \inf \left\{ S_{\omega, \gamma, L} (v) : v \in \mathcal N_{\omega, \gamma, L} \right\}.
\end{equation}
In particular, an action ground state is a solution to \eqref{eqNLS} (see Section \ref{secPreliminaries}).

Our main results can be summarized as follows.

\begin{theorem} \label{thmMain}
    The following statements hold.
    \begin{enumerate}[label=(\roman*)]
        \item Let $\omega > 0$ and $0 < \gamma < ( 1 - \sqrt{3}/2 )^{1/3} \sqrt{\omega}$. Then, there exists $L^* > 0$ such that, for all $0 < L < L^*$, there exists an action ground state.
        \item Let $\omega > 0$ and $0 < \gamma < \sqrt{\omega}$. Then, there exists $L^{**} > 0$ such that, for all $L \geq L^{**}$, there exists an action ground state.
    \end{enumerate}
    Furthermore, the action ground state has a shape that must be chosen from those described in Propositions \ref{propShapeEven} and \ref{propShapeOdd}. %
\end{theorem}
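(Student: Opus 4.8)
The plan is to obtain the ground state as a minimizer of $S_{\omega,\gamma,L}$ on the Nehari manifold $\mathcal N_{\omega,\gamma,L}$ and to recover compactness through a strict sub-additivity inequality. I would first record the elementary variational facts: on $\mathcal N_{\omega,\gamma,L}$ one has $S_{\omega,\gamma,L}(v)=\tfrac14\|v\|_{L^4(\mathcal G_L)}^4$, the Nehari constraint is non-degenerate, $s_{\omega,\gamma,L}>0$, and any minimizer is automatically a solution of \eqref{eqNLS} (the Lagrange multiplier being forced to $1$ by the constraint $I_{\omega,\gamma,L}=0$). I would also single out the comparison value $s_{\R}(\omega)=\tfrac43\omega^{3/2}$, the action of the whole-line soliton $\phi_\omega=\sqrt{2\omega}\,\sech(\sqrt\omega\,\cdot)$: this is the relevant ``problem at infinity'' because, far from the vertex, the tail is isometric to a half-line, so any bump escaping to infinity along it converges, after translation, to a soliton on $\R$.

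The core of the existence proof is a profile decomposition adapted to $\mathcal G_L$. Applying it to a minimizing sequence, the sequence splits (up to a subsequence) into a part that converges on $\mathcal G_L$ and finitely many profiles traveling to infinity along the tail; each escaping profile solves the limiting equation on $\R$ and therefore carries action at least $s_{\R}(\omega)$, while the surviving part has nonnegative action. Since the action is asymptotically additive along the decomposition, any escaping mass would force $s_{\omega,\gamma,L}\geq s_{\R}(\omega)$. Hence the strict inequality $s_{\omega,\gamma,L}<s_{\R}(\omega)$ forbids all escape, produces a nontrivial strong limit, and yields a minimizer. The existence statement is thereby reduced, in both regimes, to proving this one strict inequality.

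The strict inequality is established by exhibiting, in each regime, a competitor with action below $s_{\R}(\omega)$. For small $L$ (part (i)) I would concentrate the competitor on the tail: a soliton pushed into the half-line by the repulsive vertex, of the form $\sqrt{2\omega}\,\sech(\sqrt\omega(\cdot-x_0))$ with $x_0>0$ selected through the Robin-type relation produced by the $\delta$-condition (solvable only for $\gamma<\sqrt\omega$), prolonged by a nearly constant function on the short loop so that the vertex condition is met up to $O(L)$; expanding the action and the Nehari rescaling, the loop contributes $O(L)$ and the condition $\gamma<(1-\sqrt3/2)^{1/3}\sqrt\omega$ is precisely the range in which this construction remains below $s_{\R}(\omega)$. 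For large $L$ (part (ii)) I would instead place the competitor on the loop, near the antipode of the vertex (a $\dn$-type profile matched to the loop length), where its tails are exponentially small at the vertex, so that the $\delta$-condition is satisfied up to exponentially small errors and the action converges to $s_{\R}(\omega)$ from below; here $\gamma<\sqrt\omega$ is the admissibility threshold for routing the profile through the vertex. The main obstacle is exactly this step: converting the heuristic ``$O(L)$'' and ``exponentially small'' corrections into rigorous strict inequalities, and extracting the sharp $\gamma$-thresholds, which demands a quantitative expansion of both the action and the projection onto $\mathcal N_{\omega,\gamma,L}$.

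Finally, for the shape, any ground state solves $-u''+\omega u=u^3$ on each open edge, together with continuity and the $\delta$-derivative condition at the vertex. On each edge the first integral $\tfrac12(u')^2-\tfrac\omega2 u^2+\tfrac14 u^4=\mathrm{const}$ confines $u$ to the classical $\sech$, $\dn$ and $\cn$ families: decay forces the tail to be a piece of a $\sech$ profile, while the loop carries a periodic $\dn$ or $\cn$ arc. I would then exploit the reflection symmetry of the loop fixing the vertex to separate even and odd profiles, and use the monotonicity of the associated period function to match the arc length to $2L$ and to count the admissible configurations; this produces precisely the lists of Propositions \ref{propShapeEven} and \ref{propShapeOdd}.
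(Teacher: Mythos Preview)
Your proposal is correct and follows essentially the same route as the paper: an existence criterion $s_{\omega,\gamma,L}<s_{\mathbb R}(\omega)=\tfrac43\omega^{3/2}$ obtained from a profile decomposition on $\mathcal G_L$, verified in each regime by an explicit competitor, and a shape classification via the first integral and period functions on the phase plane. The only notable differences are cosmetic: for large $L$ the paper uses the exact $\sech$ on the loop (centered at the antipode, with a matching $\sech$ tail) rather than a $\dn$ approximant, which makes the strict inequality a one-line integral comparison; and the paper's non-even case is handled by observing that $u_c'(-L)=u_c'(L)$ forces periodicity $2L=nT$, rather than by an odd/even dichotomy.
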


%
%
%

We have chosen in the present study to restrict ourselves to the case of a repulsive delta condition on the vertex. Indeed, it is in this case that we expect to observe the richer phenomenology, the attractive delta case being usually simpler to handle and closer to the Kirchhoff case (see e.g. \cite{LeFuFiKsSi08} in the case of the line with a delta at the origin).

We also have chosen to focus on the analysis of action ground states, i.e. minimizers of the action on the Nehari manifold. The techniques that we have developed could also be useful for the analysis of energy ground states, i.e. minimizers of the Schr\"odinger energy at fixed mass. We refer to the recent work \cite{DeDoGaSe23} for the relations between action and energy ground states in the context of metric graphs.

With our current theoretical technology, we are limited in the analysis to the case of either small or large length  $L$. To further understand the phenomenology in the whole range $L\in(0,\infty)$, we can rely on numerical simulations, which are presented in Section \ref{sec:numerics}. In particular, we observed in numerical simulations the existence of action ground states for any $L\in(0,\infty)$, with data at the vertex positioned on an explicitly given curve ($\Gamma_\omega$ in Figure \ref{figGS3}). 

The rest of this article is divided as follows. In Section \ref{secPreliminaries}, we present the functional setting on the tadpole graph, we introduce the Hamiltonian operator and its associated quadratic form, we make the link between the minimization problem and the equation on the graph and we present the classical ordinary differential setting on which we will rely, including in particular the period functions. Section \ref{sec:ground_states} is devoted to the proof of the existence of ground states in the limit of small and large loop length. We first establish an existence criterion: a ground state will exist on the tadpole if the Nehari action infimum on the tadpole is strictly smaller than the one on the line. This is based on a profile decomposition argument. The existence result is then obtained by constructing a suitable competitor whose action is smaller than the Nehari minimum on the line. In the small limit case, the competitor is close to the half-soliton on the line, while in the large limit case it is close to the full soliton on the loop. In Section \ref{sec:non_negative}, we analyse the various possible shapes for non negative stationary states. To this aim, we consider the curves of possible data constructed using the boundary condition at the vertex. Depending on the position of this curve with respect to the soliton curve on the phase plane, we may obtain different shapes for the stationary states. In Section \ref{sec:numerics} we present numerical experiments which confirm the theoretical and present the type of ground states which are expected depending on the value of the length. The appendix \ref{secSpectrum} presents a detailed spectral analysis of the linear operator and closes the paper. 

\section{Preliminaries} \label{secPreliminaries}

\subsection{Tadpole graph}

Let $L > 0$. We consider the metric tadpole graph $\mathcal{G}_L$ composed of a finite edge $c$, where $c$ stands for \emph{circle}, identified to $[-L, L]$, attached to an infinite edge $h$, where $h$ stands for \emph{half-line}, identified to $[0, \infty)$ at the vertex $\operatorname{v}$ corresponding to $\{-L\}$ and $\{L\}$ on $c$ and to $\{0\}$ on $h$. A schematic representation of $\mathcal{G}_L$ is given in Figure \ref{figTadpole}. A point $\xi$ of $\mathcal{G}_L$ is represented either by $\xi_c \in (-L, L)$ if $\xi \in c\setminus\{\operatorname{v}\}$ or by $\xi_h \in (0, \infty)$ if $\xi \in h\setminus\{\operatorname{v}\}$.
\begin{figure}[ht]
    \centering
    \includegraphics{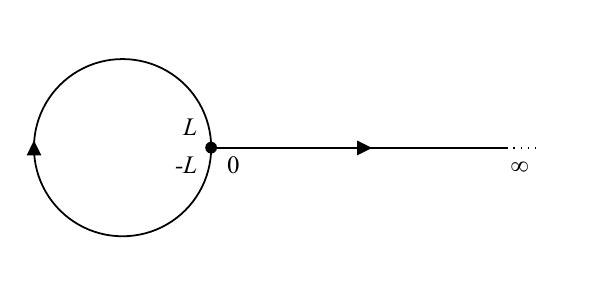}
    \caption{The tadpole graph $\mathcal{G}_L$.}
    \label{figTadpole}
\end{figure}

A function $v : \mathcal{G}_L \to \C$ is a couple of one dimensional maps $(v_c, v_h) : [-L,L] \times [0,\infty) \to \C^2$. Let be $\xi$ a point of $\mathcal{G}_L$, which is not the vertex $\operatorname{v}$, and $v$ a function on $\mathcal{G}_L$. If the context requires it, we allow ourselves to simplify the notation and use $v(\xi)$ to denote
\begin{equation*}
    v (\xi) =
    \begin{cases}
        v_c (\xi_c) & \text{ if } \xi \in c, \\
        v_h (\xi_h) & \text{ if } \xi \in h.
    \end{cases}
\end{equation*}
If $v$ satisfies continuity at the vertex, i.e.
\[
    v_c (-L) = v_c(L) = v_h(0),
\]
we denote by $v(\operatorname{v})$ the common value of $v$ at the vertex $\operatorname{v}$. We define the Lebesgue integral of a function $v$ on the graph $\mathcal{G}_L$ by
\[
    \int_{\mathcal{G}_L} v(\xi) d\xi = \int_{-L}^L v_c(x) dx + \int_0^\infty v_h(x) dx.
\]
For $p, k \in \mathbb N^*$, we define the Lebesgue space $L^p(\mathcal{G}_L)$ and the Sobolev space $H^k(\mathcal{G}_L)$ by
\[
    L^p (\mathcal{G}_L) = L^p(-L, L) \oplus L^p(0, \infty), \quad H^k (\mathcal{G}_L) = H^k(-L, L) \oplus H^k(0, \infty)
\]
endowed with the norms
\[
    \left\| v \right\|_{L^p (\mathcal{G}_L)}^p = \left\| v_c \right\|_{L^p(-L, L)}^p + \left\| v_h \right\|_{L^p(0, \infty)}^p, \quad \left\| v \right\|_{H^k(\mathcal{G}_L)}^2 = \left\| v_c \right\|_{H^k(-L, L)}^2 + \left\| v_h \right\|_{L^k(0, \infty)}^2.
\]
Finally, the scalar product on $L^2(\mathcal{G}_L)$ is given by:
\[
    (u, v)_{L^2(\mathcal{G}_L)} = \operatorname{Re} \left( \int_{\mathcal{G}_L} u(x) \overline{v(x)} dx \right) = \operatorname{Re} \left( \int_{-L}^L u_c(x) \overline{v_c(x)} dx + \int_0^\infty u_h(x) \overline{v_h(x)} dx \right).
\]
For $q \in (2,\infty]$ and $v \in H^1(\mathcal{G}_L)$, the Gagliardo–Nirenberg inequality
\begin{equation} \label{eqGN}
    \| v \|_{L^q(\mathcal{G}_L)} \leq C \| v' \|_{L^2(\mathcal{G}_L)}^{\frac{1}{2}-\frac{1}{q}} \| v \|_{L^2(\mathcal{G}_L)}^{\frac{1}{2}+\frac{1}{q}}
\end{equation}
holds for $C = C(q, L) > 0$.
The proof of \eqref{eqGN} follows from the analogous estimates for functions of the real line, by extending any component of $v$ to an even function in $H^1(\mathbb R)$. Similarly, one can prove that standard Sobolev inequalities hold on $\mathcal{G}_L$ in the same form as in the classical case.

\subsection{Hamiltonian operator}

Let $\gamma \in \mathbb R$. We equip $\mathcal{G}_L$ with the \emph{Hamiltonian operator with $\delta$-vertex condition}, i.e. the operator $\operatorname{H}_\gamma : L^2(\mathcal{G}_L) \to L^2(\mathcal{G}_L)$ defined by
\[
    \operatorname{H}_\gamma v = \left( -v_c'', -v_h'' \right)
\]
with domain
\[
    D \left( \operatorname{H}_\gamma \right) = \left\{ v \in H^2(\mathcal{G}_L) : v_c(L) = v_c(-L) = v_h(0) \text{ and } v_c'(L) - v_c'(-L) + v_h'(0) = \gamma v(\operatorname{v}) \right\}.
\]
This operator is self-adjoint (see e.g. \cite[Chapter I]{BeKu13}). Observe that the domain contains a continuity condition at the vertex and a jump condition for the derivatives and that for $\gamma = 0$, we recover the classical Kirchhoff conditions. The spectrum is given by the following proposition, which is proved in Appendix \ref{secSpectrum}.
\begin{proposition} \label{propSpectrum}
    The essential spectrum of $\operatorname{H}_\gamma$ is given by
    \begin{equation*}
        \sigma_{\operatorname{ess}} \left( \operatorname{H}_\gamma \right) = [0, \infty).
    \end{equation*}
    The discrete spectrum of $\operatorname{H}_\gamma$ is given by
    \begin{equation*}
        \sigma_{\operatorname{dis}} \left( \operatorname{H}_\gamma \right) =
        \begin{cases}
            \emptyset & \text{ if } \gamma \geq 0, \\
            \left\{ \lambda_\gamma \right\} & \text{ if } \gamma < 0,
        \end{cases}
    \end{equation*}
    where $\lambda_\gamma$ is the unique negative solution of
    \begin{equation} \label{eqNegEigVal}
        -\sqrt{|\lambda|} \left( 2 \tanh \left( \sqrt{|\lambda|}L \right) + 1 \right) = \gamma.
    \end{equation}
\end{proposition}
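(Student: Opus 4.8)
The plan is to treat the essential and discrete parts of the spectrum separately, exploiting that the only source of non-compactness in $\mathcal{G}_L$ is the half-line $h$, while the loop $c$ is compact.

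To compute $\sigma_{\operatorname{ess}}(\operatorname{H}_\gamma)$ I would first decouple the half-line from the compact core. Imposing an additional Dirichlet condition at a fixed interior point $\xi_h = a > 0$ of $h$ splits the graph into a compact metric graph and a half-line $[a, \infty)$; let $\tilde{\operatorname{H}}$ denote the resulting operator. It differs from $\operatorname{H}_\gamma$ by a perturbation of finite rank in the resolvent sense (Krein's formula), so by Weyl's theorem on the stability of the essential spectrum one gets $\sigma_{\operatorname{ess}}(\operatorname{H}_\gamma) = \sigma_{\operatorname{ess}}(\tilde{\operatorname{H}})$. On the compact component the embedding $H^1 \hookrightarrow L^2$ is compact (Rellich), so that operator has compact resolvent and empty essential spectrum, while the Dirichlet Laplacian on $[a, \infty)$ has essential spectrum $[0, \infty)$; combining these gives $\sigma_{\operatorname{ess}}(\operatorname{H}_\gamma) = [0, \infty)$. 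Alternatively, the inclusion $[0, \infty) \subseteq \sigma_{\operatorname{ess}}(\operatorname{H}_\gamma)$ can be obtained directly by constructing singular Weyl sequences of the form $\xi_h \mapsto \chi_n(\xi_h) e^{i k \xi_h}$ supported far out on $h$, where the vertex condition is invisible, the reverse inclusion following from the decoupling above.

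Since $\sigma_{\operatorname{ess}}(\operatorname{H}_\gamma) = [0, \infty)$, the discrete spectrum can only consist of isolated eigenvalues in $(-\infty, 0)$, and I would look for $\lambda = -\kappa^2$ with $\kappa > 0$, solving $\operatorname{H}_\gamma v = \lambda v$ edge by edge. On $h$, the only $L^2(0,\infty)$ solution of $-v_h'' = -\kappa^2 v_h$ is $v_h(\xi_h) = A e^{-\kappa \xi_h}$; on $c$, the solution of $-v_c'' = -\kappa^2 v_c$ is a combination of $\cosh(\kappa\,\cdot)$ and $\sinh(\kappa\,\cdot)$, and the continuity requirement $v_c(L) = v_c(-L)$ forces the odd part to vanish, leaving $v_c(\xi_c) = B \cosh(\kappa \xi_c)$. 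Matching the value at the vertex gives $A = B \cosh(\kappa L)$, and inserting these into the derivative jump condition, after dividing by $B \cosh(\kappa L) \neq 0$, produces exactly the transcendental relation \eqref{eqNegEigVal}. To count its solutions I set $g(\kappa) = -\kappa \left( 2 \tanh(\kappa L) + 1 \right)$ and check that $g(0^+) = 0$, that $g$ is strictly decreasing on $(0, \infty)$ since $g'(\kappa) = -\left( 2 \tanh(\kappa L) + 1 \right) - 2 \kappa L \sech^2(\kappa L) < 0$, and that $g(\kappa) \to -\infty$ as $\kappa \to \infty$. Thus $g$ is a strictly decreasing bijection from $(0, \infty)$ onto $(-\infty, 0)$, so for $\gamma \geq 0$ there is no admissible $\kappa$ and the discrete spectrum is empty, whereas for $\gamma < 0$ there is a unique $\kappa$ and hence a single simple eigenvalue $\lambda_\gamma = -\kappa^2 < 0$.

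The main technical point is the justification of the essential-spectrum step: one must verify that passing from $\operatorname{H}_\gamma$ to the decoupled operator $\tilde{\operatorname{H}}$ is a relatively compact perturbation at the level of resolvents, the $\delta$-condition at $\operatorname{v}$ being absorbed into this perturbation, so that Weyl's theorem applies. By contrast the discrete part is elementary, the only delicate bookkeeping being the correct orientation of the derivatives at $\operatorname{v}$ in the jump condition, which is what fixes the signs appearing in \eqref{eqNegEigVal}. I would close by noting that the eigenfunction constructed decays exponentially on $h$, confirming that it is a genuine element of $L^2(\mathcal{G}_L)$ giving an eigenvalue isolated below the essential spectrum.
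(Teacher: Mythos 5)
Your proof is correct, but it reaches the essential spectrum by a genuinely different route than the paper. The paper works entirely by hand: for $\lambda>0$ it builds explicit singular Weyl sequences by truncating the bounded generalized eigenfunctions along the half-line (Lemma \ref{lemPosLamb}); for $\lambda<0$, $\lambda\neq\lambda_\gamma$, it computes the Green's function of $\operatorname{H}_\gamma-\lambda$ explicitly and verifies that the resulting resolvent is bounded on $L^2(\mathcal{G}_L)$ (Lemma \ref{lemNegLambda}); and it only recovers $0\in\sigma_{\operatorname{ess}}(\operatorname{H}_\gamma)$ at the very end, by closedness of the essential spectrum. Along the way it also records the embedded eigenvalues $\lambda=(k\pi/L)^2$, whose eigenfunctions are supported on the loop and vanish at the vertex (Lemma \ref{lemMultPi}). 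You instead decouple the half-line by an interior Dirichlet condition and invoke Weyl's stability theorem for a rank-one (hence compact) resolvent perturbation, reducing the computation to the compact core (compact resolvent, so empty essential spectrum) and the Dirichlet half-line (essential spectrum $[0,\infty)$). Your argument is shorter, requires no explicit kernel, handles $\lambda=0$ with no separate step, and generalizes verbatim to any graph consisting of a compact core with finitely many attached half-lines; what it does not produce are the explicit resolvent formulas \eqref{eqGreenC}--\eqref{eqGreenSol} and the observation about embedded loop eigenvalues, which the paper obtains as by-products of its computation. The one step you must justify with care is precisely the Krein--Glazman decoupling, i.e. that inserting the Dirichlet condition at $\xi_h=a$ changes the resolvent by a finite-rank operator; this is classical in dimension one, but it carries the whole weight of your essential-spectrum argument. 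For the discrete spectrum the two proofs coincide in substance: your hyperbolic ansatz is the real form of the paper's complex exponentials in \eqref{eqEigFunc}, your sign convention (sum of outgoing derivatives at $\operatorname{v}$, which is the one consistent with the quadratic form $Q_{\operatorname{H}_\gamma}$ and with \eqref{eqDiffJump}) reproduces \eqref{eqNegEigVal} exactly, and your monotonicity analysis of $g(\kappa)=-\kappa\left(2\tanh(\kappa L)+1\right)$ supplies the uniqueness argument that the paper's Lemma \ref{lemEigvals} asserts without detail.
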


The quadratic form $Q_{\operatorname{H}_\gamma}$ of $\operatorname{H}_\gamma$ is given by
\[
    Q_{\operatorname{H}_\gamma}(v) = \|v'\|_{L^2(\mathcal{G}_L)}^2 + \gamma |v(\operatorname{v})|^2
\]
with domain
\[
    D \left( Q_{\operatorname{H}_\gamma} \right) = H_D^1(\mathcal{G}_L)
=
    H_D^1 \left( \mathcal{G}_L \right) = \left\{ v \in H^1(\mathcal{G}_L) : v_c(L) = v_c(-L) = v_h(0) \right\}.
\]

In this work, we will focus on the case of a \emph{repulsive} $\delta$-vertex condition. Hence $\gamma > 0$ will always be assumed in the sequel.

\subsection{Variational problem}

In this subsection, we explain the relation between equation \eqref{eqNLS} and the variational problem \eqref{eqMinAction}.

Solutions of \eqref{eqNLS} are critical points of the action $S_{\omega, \gamma, L}: H_D^1(\mathcal{G}_L) \to \mathbb R$ defined in \eqref{eqAction} and whose explicit expression is given by
\begin{equation*}
    S_{\omega, \gamma, L} (v) = \frac{1}{2} \|v'\|_{L^2(\mathcal{G}_L)}^2 + \frac{\gamma}{2} |v(\operatorname{v})|^2 + \frac{\omega}{2} \|v\|_{L^2(\mathcal{G}_L)}^2 - \frac{1}{4} \|v\|_{L^4(\mathcal{G}_L)}^4.
\end{equation*}
Indeed, a function $u \in H_D^1(\mathcal{G}_L)$ satisfies $S_{\omega, \gamma, L}' (u) = 0$ if and only if $u \in D(\operatorname{H}_\gamma)$ and is a solution of \eqref{eqNLS}. Observe that $S_{\omega, \gamma, L}$ is not bounded on $H_D^1(\mathcal{G}_L)$. Indeed, for $v \in H_D^1(\mathcal{G}_L)$, we have that
\[
    S_{\omega, \gamma, L}(tv) = \frac{t^2}{2} \|v'\|_{L^2(\mathcal{G}_L)}^2 + \frac{\gamma t^2}{2} |v(\operatorname{v})|^2 + \frac{\omega t^2}{2} \|v\|_{L^2(\mathcal{G}_L)}^2 - \frac{t^4}{4} \|v\|_{L^4(\mathcal{G}_L)}^4
    \to -\infty
\]
as $t \to \infty$.

The Nehari functional $I_{\omega, \gamma, L}: H_D^1(\mathcal{G}_L) \to \mathbb R$ defined in \eqref{eqNehariFunctional} is given by
\[
    I_{\omega, \gamma, L} (v) = \left. \frac{d}{dt} S_{\omega, \gamma, L}(t v) \right|_{t=1} = \left\langle S_{\omega, \gamma, L}' (v), v \right\rangle_{H_D^{1^\star}(\mathcal{G}_L), H_D^1(\mathcal{G}_L)},
\]
where we denoted by $H_D^{1^\star} (\mathcal{G}_L)$ the dual space of $H_D^1 (\mathcal{G}_L)$, so that for $v \in H_D^1(\mathcal{G}_L)$ we have
\begin{equation*}
    I_{\omega, \gamma, L} (v) = \|v'\|_{L^2(\mathcal{G}_L)}^2 + \gamma |v(\operatorname{v})|^2 + \omega \|v\|_{L^2(\mathcal{G}_L)}^2 - \|v\|_{L^4(\mathcal{G}_L)}^4.
\end{equation*}
 Observe that a non trivial critical point of $S_{\omega, \gamma, L}$ naturally belongs to the Nehari manifold $\mathcal N_{\omega, \gamma, L}$ defined in \eqref{eqNehariManifold}. Furthermore, for $v \in \mathcal N_{\omega, \gamma, L}$, we have
\begin{equation} \label{eqBoundAction}
    S_{\omega, \gamma, L} (v) = \frac{1}{4} \|v\|_{L^4(\mathcal{G}_L)}^4,
\end{equation}
so that $S_{\omega, \gamma, L}$ is bounded from below on $\mathcal N_{\omega, \gamma, L}$.

By \eqref{eqBoundAction}, an action ground state $u$ is also a solution of the variational problem
\begin{equation}
    s_{\omega, \gamma, L} = \inf \left\{ \frac{1}{4} \|v\|_{L^4(\mathcal{G}_L)}^4 : v \in \mathcal N_{\omega, \gamma, L} \right\}.
\end{equation}
Assume that $u\neq 0$ is an action ground state. Since $u$ is obtained as a minimizer, 
there exists a Lagrange multiplier $\lambda\in\R$ such that
\[
    S_{\omega, \gamma, L}' (u) = \lambda I_{\omega, \gamma, L}'(u),
\]
so that
\begin{align*}
    0 = I_{\omega, \gamma, L}(u)
    & = \left\langle S_{\omega, \gamma, L}' (u), u \right\rangle_{H_D^{1^\star}(\mathcal{G}_L), H^1(\mathcal{G}_L)} \\
    & = \lambda \left\langle I_{\omega, \gamma, L}' (u), u \right\rangle_{H_D^{1^\star}(\mathcal{G}_L), H^1(\mathcal{G}_L)}.
\end{align*}
However, we have 
\[
    \left\langle I_{\omega, \gamma, L}' (u), u \right\rangle_{H_D^{1^\star}(\mathcal{G}_L), H^1(\mathcal{G}_L)} = -2 \| u \|_{L^4(\mathcal{G}_L)}^4 \neq 0,
\]
from which we infer that $\lambda = 0$. Thus, $u$ is a solution of \eqref{eqNLS}.

\subsection{Phase portrait and period functions}

In this subsection, we introduce some elements related to the phase portrait of the ordinary differential equation verified by the stationary solution on each branch of the tadpole. This will be useful to study the shape of the action ground state.

Let $I = [-L, L] \text{ or } [0, \infty)$, $\omega > 0$ and $\phi \in H^2(I)$ be a solution of the equation
\begin{equation} \label{eqNLS1D}
    -\phi'' + \omega \phi - |\phi|^2 \phi = 0
\end{equation}
on $I$. Multiplying \eqref{eqNLS1D} by $\phi'$ and integrating, we obtain that there exists $E \in (-\omega^2 /2, \infty)$ such that for $x \in I$ we have
\begin{align*}
    -\phi'(x)^2 + \omega \phi(x)^2 - \frac{\phi(x)^4}{2} = -E.
\end{align*}
 We define the \emph{integral of motion} $\mathcal{E}_{\omega}$ for $(p,q) \in \mathbb R^2$ by
\begin{equation} \label{eqIntegralMotion}
    \mathcal{E}_{\omega} (p, q) = q^2 - p^2 \left( \omega - \frac{p^2}{2} \right)
\end{equation}
so that for $x \in I$ we have 
\[
    \mathcal{E}_{\omega} \left( \phi(x), \phi'(x) \right) = E.
\]
We define the \emph{$\mathcal{E}_{\omega}$-curve of level $E$} as the set
\begin{equation} \label{eqECurve}
    \mathcal{E}_{\omega, E} = \left\{ (p,q) \in \mathbb R^2 : \mathcal{E}_{\omega}(p,q) = E \right\}.
\end{equation}
We recall the definition of the complete elliptic integral of the first kind: for $k \in [0,1]$,
\begin{equation} \label{eqPeriod}
    K(k) = \int_0^{\frac{\pi}{2}} \frac{d\theta}{\sqrt{1 - k^2 \sin(\theta)^2}}.
\end{equation}
The nature of $\phi$ is different according to the different values of $\mathcal{E}_{\omega}(\phi, \phi')$. The explicit solution can be expressed in terms of a Jacobi elliptic function or a hyperbolic secant in the following way (see e.g. \cite{GuLeTs17}):
\begin{itemize}
    \item if $-\omega^2/2 \leq \mathcal{E}_{\omega}(\phi,\phi') < 0$, $\phi$ is a \emph{dnoidal-type} solution. There exists $k \in [0,1)$ and $a \in [-\sqrt{(1 - 2k^2) / \omega}K(k), \sqrt{(1 - 2k^2) / \omega}K(k))$ such that $\phi = \phi_{\operatorname{dn}, k, a}$, where $\phi_{\operatorname{dn},k,a}$ is given by
    \begin{equation} \label{eqSolDn}
        \phi_{\operatorname{dn}, k ,a}(x) = \sqrt{\frac{2\omega}{2-k^2}} \operatorname{dn} \left( \sqrt{\frac{\omega}{2-k^2}}x + a; k \right)
    \end{equation}
    for $x \in I$ and is periodic of period $2 \sqrt{(2 - k^2) / \omega}K(k)$; observe that $\phi_{\operatorname{dn}, k, a}$ is even if and only if $a = 0$ or $a = \sqrt{(1-2k^2)/\omega}K(k)$;
    \item if $\mathcal{E}_{\omega}(\phi,\phi') = 0$, $\phi$ is a hyperbolic secant. There exists $b \in \mathbb R$ such that $\phi = \phi_{\operatorname{sech}, b}$, where $\phi_{\operatorname{sech}, b}$ is given by
    \begin{equation} \label{eqSolSech}
        \phi_{\operatorname{sech}, b}(x) = \sqrt{2\omega} \operatorname{sech} \left( \sqrt{\omega}x + b \right),
    \end{equation}
    for $x \in I$; for $I = [0, \infty)$, we say that $u_c$ is a \emph{half soliton-type} solution if $b = 0$, a \emph{tail-type} solution if $b > 0$ and a \emph{bump-type} solution if $b < 0$;
    \item if $\mathcal{E}_{\omega}(\phi,\phi') > 0$, $\phi$ is a \emph{cnoidal-type} solution. There exists $k \in (1/\sqrt{2},1)$ and $a \in [-2\sqrt{(1 - 2k^2) / \omega}K(k), 2\sqrt{(1 - 2k^2) / \omega}K(k))$ such that $\phi = \phi_{\operatorname{cn}, k, a}$, where $\phi_{\operatorname{cn}, k, a}$ is given by
    \begin{equation} \label{eqSolCn}
        \phi_{\operatorname{cn}, k, a}(x) = \sqrt{\frac{2\omega k^2}{2k^2-1}} \operatorname{cn} \left(\sqrt{\frac{\omega}{2k^2-1}} x + a; k \right),
    \end{equation}
    for $x \in I$ and is periodic of period $4\sqrt{(1 - 2k^2) / \omega}K(k)$; observe that $\phi_{\operatorname{cn}, k, a}$ is even if and only if $a = 0$ or $a = 2\sqrt{(1 - 2k^2) / \omega}K(k)$.
\end{itemize}
See \cite{ByFr71} for a comprehensive study of Jacobi elliptic functions.

Let $p_0 \in [0, \sqrt{2\omega}]$. For a later use, we define $\mathcal E_{\omega, E}^{p_0, 1}$, $\mathcal E_{\omega, E}^{p_0, 2}$ and $\mathcal E_{\omega, E}^{p_0, 3}$ as the subsets of the $\mathcal E$-curve of level $E$, defined in \eqref{eqECurve}, given by
\begin{equation} \label{eqECurveSubset}
    \begin{aligned}
        \mathcal E_{\omega, E}^{p_0, 1} & = \left\{ (p,q) \in \mathcal E_{\omega, E}: p \leq p_0, q \leq 0  \right\}, \\
        \mathcal E_{\omega, E}^{p_0, 2} & = \left\{ (p,q) \in \mathcal E_{\omega, E}: p \geq p_0  \right\}, \\
        \mathcal E_{\omega, E}^{p_0, 3} & = \left\{ (p,q) \in \mathcal E_{\omega, E}: p \leq p_0, q \geq 0  \right\}.
    \end{aligned}
\end{equation}

\begin{remark} \label{rkSech}
    If $I = [0, \infty)$, then $\mathcal{E}_{\omega}(\phi,\phi') = 0$ and $\phi$ is a hyperbolic secant of type \eqref{eqSolSech} in order for $\phi$ to belong to $H^2(I)$, as $\phi$ decays to $0$. Furthermore, by the sense of the flow on the phase portrait, we have:
    \begin{itemize}
        \item $\phi$ is a tail-type solution if and only if  $ \phi'(0)<0 $, in which case
        \[
            \left\{ \left( \phi(x), \phi'(x) \right) : x \in [0, \infty) \right\} = \mathcal E_{\omega, 0}^{\phi(0), 1};
        \]
        \item $\phi$ is a half soliton-type solution if and only if $\left( \phi(0), \phi'(0) \right) = \left( \sqrt{2\omega}, 0 \right)$, in which case
        \[
            \{(\phi(x), \phi'(x)) : x \in [0, \infty) \} = \mathcal E_{\omega, 0}^{\sqrt{2\omega}, 1};
        \]
        \item $\phi$ is a bump-type solution if and only if  $ \phi'(0)>0$, in which case
        \[
            \left\{ \left( \phi(x), \phi'(x) \right) : x \in [0, \infty) \right\} = \mathcal E_{\omega, 0}^{\phi(0), 1} \cup \mathcal E_{\omega, 0}^{\phi(0), 2}.
        \]
    \end{itemize}
\end{remark}

Figure \ref{figECurve} illustrates the phase portrait of the cubic nonlinear Schrödinger equation.

\begin{figure}[ht]
    \centering
    \includegraphics[width=0.6\linewidth]{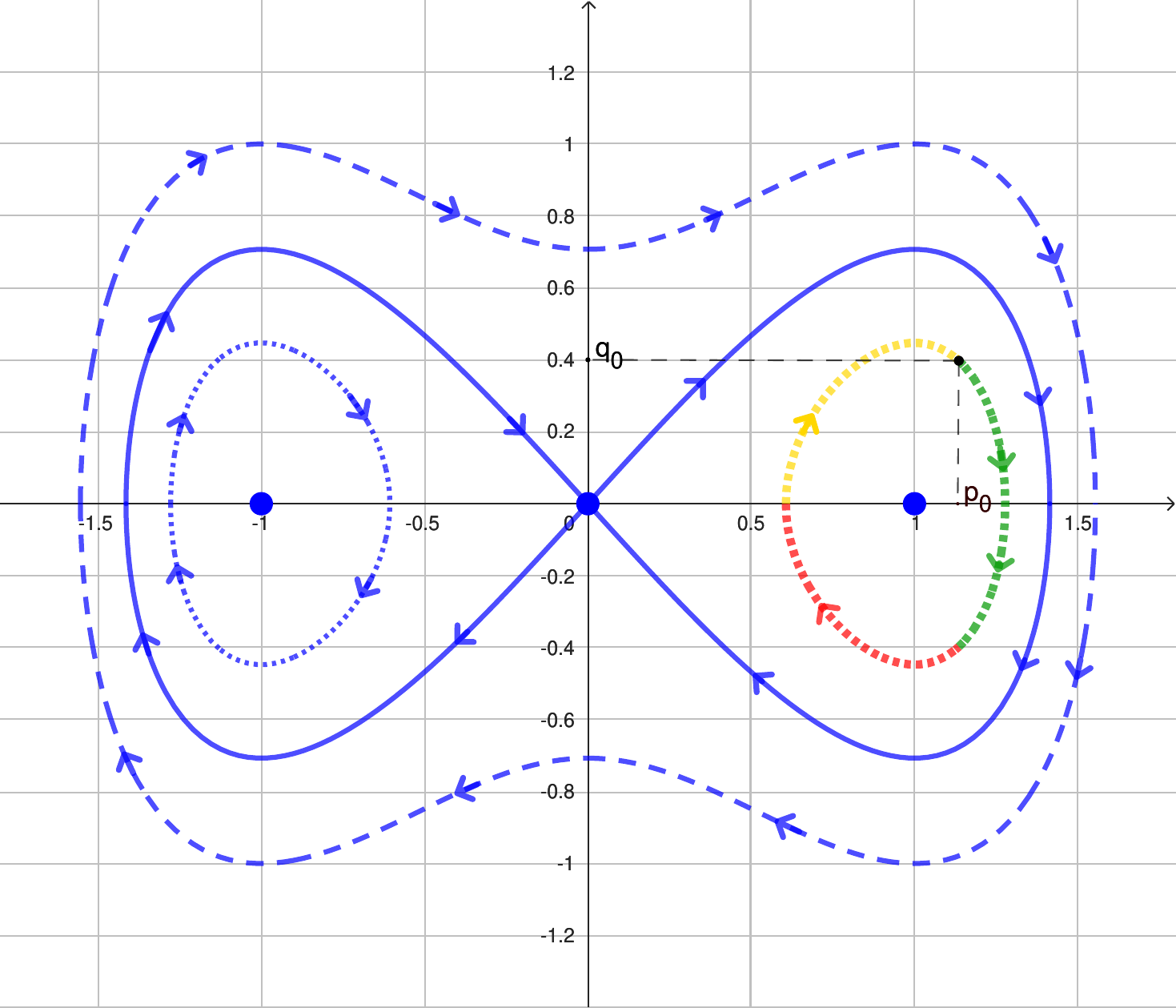}
    \caption{Phase portrait of the cubic nonlinear Schrödinger equation. The small dashed-curve is $\mathcal E_{\omega, -0.3}$ (dnoidal-type solution), the continuous curve is $\mathcal E_{\omega, 0}$ (hyperbolic secant), the large dashed-curve is $\mathcal E_{\omega, 0.5}$ (cnoidal-type solution) and the bold blue points are the fixed points, corresponding to $(-\sqrt{\omega},0)$, $(0,0)$ and $(\sqrt{\omega},0)$; the arrows represent the flow of the equation; for $E = -0.3$ and $(p_0, q_0) \in \mathcal E_{\omega, E}$, the sets $\mathcal E_{\omega, -0.3}^{p_0, 1}$ (red), $\mathcal E_{\omega, -0.3}^{p_0, 2}$ (green) and $\mathcal E_{\omega, -0.3}^{p_0, 3}$ (yellow) have been represented.}
    \label{figECurve}
\end{figure}

For $(p,q) \in \mathbb R_+ \times \mathbb R$, we define $p_+(p,q)$ and $p_-(p,q)$ as the solutions of the equation
\[
    \mathcal{E}_{\omega} (X,0) = - X^2 \left( \omega - \frac{X^2}{2} \right) = \mathcal{E}_{\omega} (p,q),
\]
given by
\begin{equation*}
    p_+ (p,q) = \sqrt{\omega \left( 1 + \sqrt{1 + \frac{2 \mathcal{E}_{\omega}(p,q)}{\omega^2}} \right)}
\end{equation*}
and
\begin{equation*}
    p_- (p,q) =
    \begin{cases}
        \sqrt{\omega \left( 1 - \sqrt{1 + \frac{2 \mathcal{E}_{\omega}(p,q)}{\omega^2}} \right)} & \text{ if } \mathcal{E}_{\omega} (p,q) \in (-\omega^2 /2, 0], \\
        - \sqrt{\omega \left( 1 + \sqrt{1 + \frac{2 \mathcal{E}_{\omega}(p,q)}{\omega^2}} \right)} & \text{ if } \mathcal{E}_{\omega} (p,q) \in (0, \infty).
    \end{cases}
\end{equation*}
In particular, they satisfy
\begin{gather*}
    p_-(p,q) \leq p \leq p_+(p,q), \quad
    p_+(0,0) = \sqrt{2\omega}, \quad
    p_-(0,0) = 0.
\end{gather*}
\begin{remark} \label{rkPeriodL}
    Let $\phi$ be a positive and even solution of \eqref{eqNLS1D} on $[-L,L]$. Then,
    \[
        \phi(0) = p_+(\phi(-L),\phi'(-L)) \text{ and } \phi'(0) = 0.
    \]    
\end{remark}
We define the \emph{period functions} $\mathcal T_+$ and $\mathcal T_-$ by
\begin{equation} \label{eqPeriodFunctions}
    \begin{aligned}
        \mathcal T_+ (p, q) & = \int_p^{p_+(p,q)} \frac{ds}{\sqrt{\mathcal{E}_{\omega} (p,q) + s^2 \left( \omega - \frac{s^2}{2} \right)}} & \text{ for } (p,q) & \in (\mathbb R_+ \times \mathbb R) \setminus \{ (0,0) \}, \\
        \mathcal T_- (p, q) & = \int_{p_-(p,q)}^{p} \frac{ds}{\sqrt{\mathcal{E}_{\omega} (p,q) + s^2 \left( \omega - \frac{s^2}{2} \right)}} & \text{ for } (p,q) & \in (\mathbb R_+ \times \mathbb R) \setminus \mathcal{E}_{\omega, 0}.
    \end{aligned}
\end{equation}
For $x \in I$, the period function $\mathcal T_+$ (respectively $\mathcal T_-$) measures the distance (in $x$) for a solution  of \eqref{eqNLS1D} between the point $(p, q)$ and the point $(p_+(p, q), 0)$ (respectively $(p_-(p, q), 0)$) obtained along the $\mathcal E_\omega$-curve of level $\mathcal E_\omega(p, q)$. In particular, if $T \in (0, \infty]$ is the period of the solution $\phi$ of \eqref{eqNLS1D}, for $x \in I$ we have
\begin{equation} \label{eqPeriodSolution}
    T = 2 \left( \mathcal T_+ \left( \phi(x), \phi'(x) \right) + \mathcal T_- \left( \phi(x), \phi'(x) \right) \right).
\end{equation}
The period functions have been introduced in the context of metric graphs in \cite{NoPe20}, \cite{KaPe21} and \cite{KaNoPe22}. They have been recently the object of an in depth study in \cite{AgCoTa24,AgCoTa25}.

The following lemma gives some elementary properties of the period functions. 
\begin{lemma} \label{lemPeriodFunc}
    The functions $\mathcal T_+$ and $\mathcal T_-$ are differentiable. Furthermore, $\mathcal T_+$ satisfies
    \begin{align}
        & \mathcal T_+(p,q) > 0 & \text{ for } (p, q) & \in \left( \left( 0, \sqrt{2\omega} \right) \times \mathbb R^* \right) \cup \left( \left( 0, \sqrt{\omega} \right) \times \{ 0 \} \right), \label{eqPeriod1} \\
        & \mathcal T_+(p,q) = 0 & \text{ for } (p, q) & \in \left[ \sqrt{\omega}, \sqrt{2\omega} \right] \times \{ 0 \}, \label{eqPeriod2} \\
        & \mathcal T_+(p,q) \to \infty & \text{ as } (p, q) & \to (0, 0), \label{eqPeriod3}
    \end{align}
    and $\mathcal T_-$ satisfies
    \begin{align}
        & \mathcal T_-(p,q) > 0 & \text{ for } (p, q) & \in \left( \left[ 0, \sqrt{2\omega} \right) \times \mathbb R^* \right) \cup \left( \left( \sqrt{\omega}, \sqrt{2 \omega} \right) \times \{ 0 \} \right), \label{eqPeriod4} \\
        & \mathcal T_-(p,q) = 0 & \text{ for } (p, q) & \in \left[ 0, \sqrt{\omega} \right] \times \{ 0 \}, \label{eqPeriod5} \\
        & \mathcal T_-(p,q) \to \infty & \text{ as } (p, q) & \to \left( p^*, q^* \right) \label{eqPeriod6}
    \end{align}
    for $(p^*,q^*) \in \mathcal{E}_{\omega, 0}$.
\end{lemma}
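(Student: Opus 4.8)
The plan is to work throughout with the radicand $f(s) = \mathcal E_\omega(p,q) + s^2(\omega - s^2/2)$ appearing under the square root in \eqref{eqPeriodFunctions}. Its positive turning points are exactly $p_-(p,q)$ and $p_+(p,q)$, it is even in $s$ with $f>0$ on the open interval $(p_-,p_+)$ (where $p_-$ is taken with the sign convention of the statement), and a direct computation gives $f(p)=q^2$. I would first dispatch the sign statements \eqref{eqPeriod1}--\eqref{eqPeriod2} and \eqref{eqPeriod4}--\eqref{eqPeriod5}. Since $p_-\le p\le p_+$ and $f>0$ on the interior of the integration interval, the integrands are real and strictly positive there, so $\mathcal T_\pm\ge 0$, with $\mathcal T_+(p,q)=0$ if and only if $p=p_+$ and $\mathcal T_-(p,q)=0$ if and only if $p=p_-$. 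Because $f(p)=q^2$, one has $p\in\{p_-,p_+\}$ precisely when $q=0$; and then $p=p_+$ corresponds to $p\ge\sqrt\omega$ while $p=p_-$ corresponds to $p\le\sqrt\omega$, the two roots being separated by the center value $\sqrt\omega$. Reading these equivalences on the ranges indicated yields \eqref{eqPeriod1}, \eqref{eqPeriod2}, \eqref{eqPeriod4} and \eqref{eqPeriod5} at once.

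For the differentiability the obstacle is the square-root singularity of the integrand at the turning-point endpoint, whose location moves with $(p,q)$. I would remove it by the substitution $s^2 = p_-^2 + (p_+^2-p_-^2)\sin^2\theta$ when $\mathcal E_\omega(p,q)\le 0$ (respectively $s=p_+\sin\psi$ when $\mathcal E_\omega(p,q)>0$), under which the factor $\sqrt{(p_+^2-s^2)(s^2-p_-^2)}$ cancels and
\[
    \mathcal T_+(p,q) = \sqrt2\int_{\theta_0}^{\pi/2}\frac{d\theta}{\sqrt{p_-^2 + (p_+^2-p_-^2)\sin^2\theta}},\qquad
    \mathcal T_-(p,q) = \sqrt2\int_{0}^{\theta_0}\frac{d\theta}{\sqrt{p_-^2 + (p_+^2-p_-^2)\sin^2\theta}},
\]
where $\sin^2\theta_0 = (p^2-p_-^2)/(p_+^2-p_-^2)$. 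In these coordinates the integrand is smooth on the range of integration (for $\mathcal T_-$ the denominator stays $\ge p_->0$, and for $\mathcal T_+$ the lower limit $\theta_0$ stays bounded away from the only possible zero $\theta=0$ of the denominator), the quantity $\omega\sqrt{1+2\mathcal E_\omega(p,q)/\omega^2}$ governing $p_\pm^2$ is smooth wherever $\mathcal E_\omega(p,q)>-\omega^2/2$, and $\theta_0$ depends smoothly on $(p,q)$ by the implicit function theorem. Differentiating under the integral sign then gives the differentiability of $\mathcal T_\pm$. I expect this regularization, together with checking that the representation glues smoothly across the separatrix $\mathcal E_{\omega,0}$ for $\mathcal T_+$, to be the main technical step. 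The genuinely delicate locus is the center $(\sqrt\omega,0)$, where $p_+-p_-\to0$, the substitution degenerates, and the two functions in fact exchange their values (a half-period) across the segment $q=0$; differentiability is therefore to be read on the domain away from this point, the functions vanishing identically — hence being trivially smooth — on the interiors of the sets singled out in \eqref{eqPeriod2} and \eqref{eqPeriod5}.

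Finally, for the blow-up statements \eqref{eqPeriod3} and \eqref{eqPeriod6} I would argue by explicit lower bounds exhibiting a logarithmic divergence, reflecting the infinite period of the homoclinic orbit. For \eqref{eqPeriod3}, since $f(s)\le q^2+\omega s^2$ and $p<\sqrt\omega<p_+$ near the origin,
\[
    \mathcal T_+(p,q)\ge\int_p^{\sqrt\omega}\frac{ds}{\sqrt{q^2+\omega s^2}}\ge\frac{1}{\sqrt\omega}\log\!\left(\frac{\omega+|q|}{\sqrt\omega\,p+|q|}\right)\xrightarrow[(p,q)\to(0,0)]{}\infty.
\]
For \eqref{eqPeriod6}, fix $(p^*,q^*)\in\mathcal E_{\omega,0}$ with $p^*>0$; as $(p,q)\to(p^*,q^*)$ one has $\mathcal E_\omega(p,q)\to0$ and $p_-(p,q)\to0$, so with $\delta=p^*/2$ the bound $f(s)\le|\mathcal E_\omega(p,q)|+\omega s^2$ on $[\max(p_-,0),\delta]$ gives
\[
    \mathcal T_-(p,q)\ge\int_{\max(p_-,0)}^{\delta}\frac{ds}{\sqrt{|\mathcal E_\omega(p,q)|+\omega s^2}},
\]
which diverges like $-\log|\mathcal E_\omega(p,q)|$. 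The saddle point $(0,0)$ is special and I would exclude it (or restrict to admissible approaches), since there the limit of $\mathcal T_-$ is genuinely path-dependent; note that $(0,0)$ is the only point of $\mathcal E_{\omega,0}$ with $p^*=0$, so all remaining separatrix points are covered by the argument above.
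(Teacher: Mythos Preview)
Your treatment of the sign and vanishing statements \eqref{eqPeriod1}, \eqref{eqPeriod2}, \eqref{eqPeriod4}, \eqref{eqPeriod5} is the same as the paper's: identify $\mathcal T_\pm=0$ with $p=p_\pm$, then locate those equalities on the $q=0$ axis on either side of $\sqrt\omega$.

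The other two parts diverge from the paper. For differentiability the paper simply asserts that $\mathcal T_\pm$ are differentiable ``as the functions $\mathcal E_\omega$, $p_+$ and $p_-$ are differentiable'', without addressing the moving square-root singularity at the turning-point endpoint. Your desingularizing substitution $s^2=p_-^2+(p_+^2-p_-^2)\sin^2\theta$ is a genuinely more careful route: it actually removes the endpoint singularity and makes differentiation under the integral sign legitimate, at the cost of having to discuss the gluing across $\mathcal E_{\omega,0}$ and the degenerate locus $(\sqrt\omega,0)$. What your approach buys is rigor; what the paper's buys is brevity.

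For the blow-up statements \eqref{eqPeriod3} and \eqref{eqPeriod6} the paper uses Fatou's lemma: passing $\liminf$ inside gives $\int_0^{\sqrt{2\omega}}\!ds/\sqrt{s^2(\omega-s^2/2)}=\infty$ and $\int_0^{p^*}\!ds/\sqrt{s^2(\omega-s^2/2)}=\infty$. Your explicit logarithmic lower bounds via $f(s)\le q^2+\omega s^2$ (resp.\ $|\mathcal E_\omega|+\omega s^2$) reach the same conclusion by a slightly more hands-on computation; the Fatou argument is shorter, yours gives a quantitative rate. Your caveat that the saddle $(0,0)\in\mathcal E_{\omega,0}$ must be excluded from \eqref{eqPeriod6} is well taken and is a subtlety the paper does not mention: approaching $(0,0)$ along $q=0$ one has $\mathcal T_-\equiv 0$, so the limit is genuinely path-dependent there, and the Fatou bound degenerates to $\int_0^0=0$ as well.
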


\begin{proof}
    The functions $\mathcal T_+$ and $\mathcal T_-$ are differentiable as the functions $\mathcal{E}_{\omega}$, $p_+$ and $p_-$ are differentiable. Furthermore, we have
    \begin{align*}
        (p, q) \in \left[ \sqrt{\omega}, \sqrt{2\omega} \right] \times \{ 0 \} & \text{ if and only if } p = p_+(p, q), \\
        (p, q) \in \left[ 0, \sqrt{\omega} \right] \times \{ 0 \} & \text{ if and only if } p = p_-(p, q).
    \end{align*}
    This proves \eqref{eqPeriod2} and \eqref{eqPeriod4}. As the integrand is positive, \eqref{eqPeriod1} and \eqref{eqPeriod3} hold. Finally, by Fatou's lemma, we have
    \begin{align*}
        \liminf_{(p,q) \to 0} \mathcal T_+(p,q) & \geq \int_0^{\sqrt{2\omega}} \frac{ds}{\sqrt{s^2 \left( \omega - \frac{s^2}{2} \right)}} = \infty, \\
        \liminf_{(p,q) \to (p^*,q^*)} \mathcal T_-(p,q) & \geq \int_0^{p^*} \frac{ds}{\sqrt{s^2 \left( \omega - \frac{s^2}{2} \right)}} = \infty.
    \end{align*}
    This proves \eqref{eqPeriod3} and \eqref{eqPeriod6} and concludes the proof.
\end{proof}

\section{Analysis of ground states}
\label{sec:ground_states}

We first give a criterion (Lemma \ref{lemExistenceCriterion})  for the existence of action ground states. This criterion is standard in the case of energy ground states (see e.g. \cite{AdCaFiNo14,AdSeTi15b,AdSeTi16}) and, as we will see, can also be adapted to the case of action ground states (see \cite{DeDoGaSe23} for the relations between action ground states and energy ground states). 
We then deduce from the criterion an existence result for given sets of parameters $\omega$, $\gamma$ and $L$. %

The proof of the existence of action ground states is based on the comparison between the infimum  $s_{\omega, \gamma, L}$  of the action restrained on the Nehari manifold on the graph (defined in \eqref{eqMinAction}) and the corresponding one on the line. We define the action on the line $S_{\omega,\mathbb R}: H^1(\mathbb R) \to \mathbb R$ by
\begin{equation} \label{eqActionLine}
    S_{\omega, \mathbb R}(v) = \frac{1}{2} \|v'\|_{L^2(\mathbb R)}^2 + \frac{\omega}{2} \|v\|_{L^2(\mathbb R)}^2  - \frac{1}{4} \|v\|_{L^4(\mathbb R)}^4,
\end{equation}
and the Nehari functional on the line $I_{\omega,\mathbb R}: H^1(\mathbb R) \to \mathbb R$ by
\begin{equation} \label{eqNehariLine}
    I_{\omega, \mathbb R}(v) = \|v'\|_{L^2(\mathbb R)}^2 + \omega \|v\|_{L^2(\mathbb R)}^2 - \|v\|_{L^4(\mathbb R)}^4 .
\end{equation}
We denote by $s_{\omega, \mathbb R}$ the action level of the ground state on the Nehari manifold on $\mathbb R$, i.e.
\begin{equation} \label{eqMinActionLine1}
    s_{\omega, \mathbb R} = \inf \left\{ S_{\omega, \mathbb R}(v) : v \in H^1(\mathbb R)\setminus\{0\},\; I_{\omega, \mathbb R}(v) = 0 \right\}.
\end{equation}
It is known (see \cite{Ca03}) that the ground state on $\mathbb R$ is given, up to translation, for $x \in \mathbb R$, by
\begin{equation} \label{eqGSLine}
    \phi(x) = \sqrt{2\omega} \operatorname{sech} \left( \sqrt{\omega} x \right),
\end{equation}
and a direct computation shows that
\begin{equation} \label{eqMinActionLine2}
    s_{\omega, \mathbb R} = \frac{4}{3} \omega^\frac{3}{2}.
\end{equation}

The following lemma gives a criterion for the existence of an action ground state.

\begin{lemma} \label{lemExistenceCriterion}
    If the condition
    \[
        s_{\omega, \gamma, L} < \frac{4}{3} \omega^\frac{3}{2},
    \]
    holds, then the variational problem \eqref{eqMinAction} admits a minimizer. %
\end{lemma}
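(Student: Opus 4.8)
The plan is to run a concentration–compactness / profile decomposition argument on a minimizing sequence and to use the strict inequality $s_{\omega,\gamma,L}<\tfrac43\omega^{3/2}=s_{\omega,\mathbb R}$ to forbid any loss of mass to infinity along the half-line $h$.

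First I would reformulate the problem through the Nehari projection. For $v\in H_D^1(\mathcal G_L)\setminus\{0\}$, set $P(v)=\|v'\|_{L^2(\mathcal G_L)}^2+\gamma|v(\operatorname{v})|^2+\omega\|v\|_{L^2(\mathcal G_L)}^2\ge 0$. Then $t\mapsto S_{\omega,\gamma,L}(tv)=\tfrac{t^2}{2}P(v)-\tfrac{t^4}{4}\|v\|_{L^4(\mathcal G_L)}^4$ is maximized over $t>0$ at $t_v=(P(v)/\|v\|_{L^4(\mathcal G_L)}^4)^{1/2}$, with $t_v v\in\mathcal N_{\omega,\gamma,L}$ and
\[
    S_{\omega,\gamma,L}(t_v v)=\frac{P(v)^2}{4\|v\|_{L^4(\mathcal G_L)}^4},
\]
so that $s_{\omega,\gamma,L}=\inf_{v\neq 0}P(v)^2/(4\|v\|_{L^4(\mathcal G_L)}^4)$, and the same computation on the line gives $s_{\omega,\mathbb R}=\inf_{\psi\neq0}P_{\mathbb R}(\psi)^2/(4\|\psi\|_{L^4(\mathbb R)}^4)$ with $P_{\mathbb R}(\psi)=\|\psi'\|_{L^2(\mathbb R)}^2+\omega\|\psi\|_{L^2(\mathbb R)}^2$. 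I then take a minimizing sequence $(v_n)\subset\mathcal N_{\omega,\gamma,L}$; on the Nehari manifold $P(v_n)=\|v_n\|_{L^4(\mathcal G_L)}^4$ is bounded, and since $\gamma>0$ makes $P$ coercive on $H_D^1(\mathcal G_L)$, the sequence $(v_n)$ is bounded there. A Gagliardo–Nirenberg argument using \eqref{eqGN} gives $s_{\omega,\gamma,L}>0$, which I will need below.

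The core step is a profile decomposition adapted to $\mathcal G_L$. Up to a subsequence I expect a weak limit $V\in H_D^1(\mathcal G_L)$ together with \emph{line profiles} $\psi^j\in H^1(\mathbb R)$ carried by translations $x_n^j\to+\infty$ escaping along $h$, such that the quadratic part, the $L^4$ norm and the vertex value decouple:
\[
    P(v_n)=P(V)+\sum_j P_{\mathbb R}(\psi^j)+o(1),\qquad \|v_n\|_{L^4(\mathcal G_L)}^4=\|V\|_{L^4(\mathcal G_L)}^4+\sum_j\|\psi^j\|_{L^4(\mathbb R)}^4+o(1),
\]
while $v_n(\operatorname{v})\to V(\operatorname{v})$ by the compact embedding $H^1(-L,L)\hookrightarrow C([-L,L])$. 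I regard the construction and bookkeeping of this decomposition as the main obstacle: one must handle the boundary term $\gamma|v(\operatorname{v})|^2$ (the escaping profiles contribute nothing to it) and check that an escaping bump lives on the full line $\mathbb R$, the endpoint of $h$ receding to $-\infty$ under the translation $x\mapsto x-x_n^j$. Everything after this is algebra.

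Finally I would close by strict subadditivity. Set $a=\tfrac14\|V\|_{L^4(\mathcal G_L)}^4$ and $b_j=\tfrac14\|\psi^j\|_{L^4(\mathbb R)}^4$. Since $I_{\omega,\gamma,L}(v_n)=0$, the decoupling yields $P(V)+\sum_j P_{\mathbb R}(\psi^j)=4s_{\omega,\gamma,L}$ and $a+\sum_j b_j=s_{\omega,\gamma,L}$. The quotient characterization gives $P(V)\ge 4\sqrt{a\,s_{\omega,\gamma,L}}$ and $P_{\mathbb R}(\psi^j)\ge 4\sqrt{b_j\,s_{\omega,\mathbb R}}$ (both trivial when the profile vanishes); summing and using $\sum_j\sqrt{b_j}\ge\sqrt{\sum_j b_j}$ leads to
\[
    s_{\omega,\gamma,L}\ge \sqrt{s_{\omega,\gamma,L}\,a}+\sqrt{s_{\omega,\mathbb R}\textstyle\sum_j b_j}.
\]
If some $\psi^j$ were nonzero then $\sum_j b_j>0$, and inserting the hypothesis $s_{\omega,\gamma,L}<s_{\omega,\mathbb R}$ together with $\sqrt a+\sqrt{\sum_j b_j}\ge\sqrt{a+\sum_j b_j}=\sqrt{s_{\omega,\gamma,L}}$ would force $s_{\omega,\gamma,L}>s_{\omega,\gamma,L}$, a contradiction. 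Hence all line profiles vanish, so $\|V\|_{L^4(\mathcal G_L)}^4=4s_{\omega,\gamma,L}>0$ gives $V\neq 0$, while $P(V)=4s_{\omega,\gamma,L}=\|V\|_{L^4(\mathcal G_L)}^4$ means $I_{\omega,\gamma,L}(V)=0$ with $S_{\omega,\gamma,L}(V)=\tfrac14\|V\|_{L^4(\mathcal G_L)}^4=s_{\omega,\gamma,L}$; thus $V$ is the sought minimizer.
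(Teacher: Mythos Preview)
Your argument is correct in spirit and follows a standard concentration--compactness route, but it differs from the paper's in how the contradiction is extracted. The paper invokes a Palais--Smale profile decomposition (its Lemma~\ref{lemProfileDecomposition}) in which both the weak limit $v$ and each escaping bubble $\phi_i$ are automatically \emph{critical points} of $S_{\omega,\gamma,L}$ and $S_{\omega,\mathbb R}$ respectively. Once that is granted, any nonzero $\phi_i$ lies on the real-line Nehari manifold and contributes at least $s_{\omega,\mathbb R}$ to the decoupled action, so one line gives $s_{\omega,\gamma,L}\ge S_{\omega,\gamma,L}(v)+s_{\omega,\mathbb R}\ge s_{\omega,\mathbb R}$, contradicting the hypothesis. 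You instead work with a plain norm-splitting decomposition (no criticality of the profiles) and close via the quotient characterization and a subadditivity inequality. The trade-off: the paper's closing argument is shorter but requires the minimizing sequence to be Palais--Smale, a point it asserts without proof and which typically needs Ekeland's principle; your approach avoids that, at the cost of more algebra.

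One small point to tighten: the identity $P(v_n)=P(V)+\sum_j P_{\mathbb R}(\psi^j)+o(1)$ is generally only an inequality $\ge$ in a bare profile decomposition (there can be an $H^1$-residual that does not vanish). This does not affect the dichotomy step, but it means that after ruling out all bubbles you only know $P(V)\le\liminf P(v_n)=4s_{\omega,\gamma,L}$, hence $I_{\omega,\gamma,L}(V)\le 0$; you should then recover $I_{\omega,\gamma,L}(V)=0$ by the usual Nehari-projection argument (a strict inequality would yield $t^*<1$ with $S_{\omega,\gamma,L}(t^*V)<s_{\omega,\gamma,L}$), rather than asserting it directly.
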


The Lemma will be proved using a profile decomposition argument the Palais-Smale sequences. 

Recall that a \emph{Palais-Smale sequence for $S_{\omega, \gamma, L}$ at level $c$}, denoted as a $(PS)_c$ sequence, is a sequence $(v_n)_{n \in \mathbb N}$ such that
\[
    S_{\omega, \gamma, L} (v_n) \to c \text{ and } S_{\omega, \gamma, L}' (v_n) \to 0 \text{ in } H^{-1}(\mathcal{G}_L) \text{ as } n \to \infty.
\]
The profile decomposition is a refinement of the concentration-compactness principle. In the present context, it takes the form of the following lemma, which is in the spirit of the work of \cite{JeKa08} and could be proved following similar arguments. 

\begin{lemma} \label{lemProfileDecomposition}
    Let $(v_n = (v_{n,c}, v_{n,h}))_{n \in \mathbb N}$ be a bounded $(PS)_{s_{\omega, \gamma, L}}$ sequence for $S_{\omega, \gamma, L}$. Then there exist $v$ solution of \eqref{eqNLS}; an integer $k \geq 0$; for $i = 1,\dots,k$, sequences $(x_n^i)_{n \in \mathbb N} \subset \mathbb R^+$ and non-zero functions $\phi_i \in H^1(\mathbb R)$ satisfying $S_{\omega, \mathbb R}'(\phi_i) = 0$; such that $(v_n)_{n \in \mathbb N}$ satisfies up to a subsequence:
    \begin{align*}
        & v_n \rightharpoonup v \text{ weakly in } H^1(\mathcal{G}_L), \\
        & S_{\omega, \gamma, L} (v_n) \to s_{\omega, \gamma, L} = S_{\omega, \gamma, L}(v) + \sum_{i = 0}^k S_{\omega, \mathbb R}(\phi_i), \\
        & v_{n, c} - v_c \to 0 \text{ strongly in } H^1(-L,L), \\
        & v_{n, h} - \left(v_h + \sum_{i = 0}^k \phi_i (\cdot - x_n^i) \right) \to 0 \text{ strongly in } H^1(0,\infty), \\
        & x_n^i \to \infty \text{ and } |x_n^i - x_n^j| \to \infty \text{ for } 1 \leq i \neq j \leq k,
    \end{align*}
    as $n \to \infty$, where we agree that in the case $k = 0$, the above holds without $\phi_i$ and $x_n^i$.
\end{lemma}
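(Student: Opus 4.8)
The plan is to run the standard iterative bubbling (profile extraction) scheme, adapted to the graph geometry, exploiting the fact that the only source of non-compactness is escape to infinity along the half-line $h$, while the compact core $c=[-L,L]$ enjoys Rellich compactness.

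First I would extract, from the bounded sequence $(v_n)$, a subsequence with $v_n \rightharpoonup v$ weakly in $H^1(\mathcal{G}_L)$. Testing the relation $S_{\omega, \gamma, L}'(v_n) \to 0$ against an arbitrary $\psi \in H_D^1(\mathcal{G}_L)$ and passing to the limit (the quartic term converges by local compactness) yields $S_{\omega, \gamma, L}'(v) = 0$, so that $v$ solves \eqref{eqNLS}. On the compact edge, the embeddings $H^1(-L,L) \hookrightarrow L^2(-L,L)$ and $H^1(-L,L) \hookrightarrow L^4(-L,L)$ are compact and the trace at $\pm L$ converges, so $v_{n,c} \to v_c$ in $L^2$ and $L^4$; combining this with the Palais--Smale relation and the limiting equation upgrades the convergence to $v_{n,c} - v_c \to 0$ strongly in $H^1(-L,L)$, which is the third displayed line.

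The heart of the matter is the iterative extraction of profiles on the half-line. Setting $w_n^{0} := v_{n,h} - v_h \rightharpoonup 0$ in $H^1(0,\infty)$, if $w_n^0 \to 0$ strongly I stop with $k=0$; otherwise $\limsup_n \|w_n^0\|_{L^4(0,\infty)} > 0$, and a concentration argument in the spirit of Lions' vanishing lemma produces translations $x_n^1 \to \infty$ such that $w_n^0(\cdot + x_n^1)$ converges weakly in $H^1_{\mathrm{loc}}(\mathbb{R})$ to some $\phi_1 \neq 0$, which by the uniform bound lies in $H^1(\mathbb{R})$. Because $x_n^1 \to \infty$, the vertex $\operatorname{v}$ and the endpoint $0$ recede to $-\infty$ under the translation, so the limiting profile feels only the homogeneous line equation; passing to the limit in the translated Palais--Smale relation gives $S_{\omega,\mathbb{R}}'(\phi_1) = 0$. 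I would then set $w_n^1 := w_n^0 - \phi_1(\cdot - x_n^1)$ and repeat. A Brezis--Lieb type splitting yields the exact action decomposition $S_{\omega, \gamma, L}(v_n) = S_{\omega, \gamma, L}(v) + \sum_i S_{\omega,\mathbb{R}}(\phi_i) + o(1)$ together with the asymptotic $H^1$-orthogonality of the bubbles, which forces $|x_n^i - x_n^j| \to \infty$ for $i \neq j$.

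The procedure terminates because every nontrivial critical point of $S_{\omega,\mathbb{R}}$ satisfies $S_{\omega,\mathbb{R}}(\phi_i) \geq s_{\omega,\mathbb{R}} = \tfrac{4}{3}\omega^{3/2} > 0$, so each bubble carries a fixed minimal amount of action, while the total action $S_{\omega, \gamma, L}(v_n) \to s_{\omega, \gamma, L}$ stays bounded; hence only finitely many profiles can be extracted, and after the final step $k \geq 0$ the remainder converges strongly to $0$ in $H^1(0,\infty)$, which delivers the fourth displayed line. I expect the main obstacle to be the third step: verifying rigorously that the translated weak limits are genuine critical points of the line functional (controlling the vertex and boundary contributions, which vanish in the limit precisely because $x_n^i \to \infty$) and carrying out the Brezis--Lieb bookkeeping with no loss of action, which is exactly what underlies the energy identity $s_{\omega, \gamma, L} = S_{\omega, \gamma, L}(v) + \sum_i S_{\omega,\mathbb{R}}(\phi_i)$.
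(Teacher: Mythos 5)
The paper does not actually prove this lemma: it states it and remarks that it ``is in the spirit of the work of \cite{JeKa08} and could be proved following similar arguments.'' Your sketch is precisely the standard profile-decomposition argument that this citation points to --- weak limit solving \eqref{eqNLS}, Rellich compactness on the compact core, iterative bubble extraction along the half-line with translations escaping to infinity, Brezis--Lieb splitting of the action, and termination because each bubble costs at least $s_{\omega,\mathbb R}=\tfrac43\omega^{3/2}>0$ --- so your approach coincides with the one the authors intend, and it is sound. The only step worth flagging is the dichotomy ``if $w_n^0=v_{n,h}-v_h$ does not converge strongly in $H^1(0,\infty)$ then $\limsup_n\|w_n^0\|_{L^4(0,\infty)}>0$'': this is not automatic from boundedness, but follows from the Palais--Smale property, since testing $S_{\omega,\gamma,L}'(v_n)-S_{\omega,\gamma,L}'(v)$ against $v_n-v$ and using compactness on $(-L,L)$ gives
\[
\|v_n'-v'\|_{L^2(\mathcal{G}_L)}^2+\omega\|v_n-v\|_{L^2(\mathcal{G}_L)}^2+\gamma|v_n(\operatorname{v})-v(\operatorname{v})|^2
\lesssim \|w_n^0\|_{L^4(0,\infty)}+o(1),
\]
(with $\gamma>0$ making the vertex term harmless), so vanishing of the $L^4$ norm forces strong convergence. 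Making this explicit, together with the Brezis--Lieb bookkeeping you already identify as the delicate point, would turn your sketch into a complete proof.
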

We may now proceed to the proof of the existence criterion.

\begin{proof}[Proof of Lemma \ref{lemExistenceCriterion}]
    Let $(u_n)_{n \in \mathbb N} \subset \mathcal N_{\omega, \gamma, L}$ be a minimizing sequence for $S_{\omega, \gamma, L}$. It is a $(PS)_{s_{\omega, \gamma, L}}$ sequence, as $S_{\omega, \gamma, L}(u_n) \to s_{\omega, \gamma, L}$  and $S_{\omega, \gamma, L}'(u_n) \to 0$ as $n \to \infty$. By contraposition, assume that $(u_n)_{n \in \mathbb N}$ does not converge. Then the case $k = 0$ of Lemma \ref{lemProfileDecomposition} cannot occurs. Thus, there exists $u$ solution of \eqref{eqNLS} and $\phi_1 \in H^1(\mathbb R)$ satisfying $S_{\omega, \mathbb R}'(\phi_1) = 0$ such that
    \begin{align*}
        s_{\omega, \gamma, L} & = \liminf S_{\omega, \gamma, L} (u_n) \\
        & \geq S_{\omega, \gamma, L} (u) + S_{\omega, \mathbb R} (\phi_1) \\
        & \geq S_{\omega, \gamma, L} (u) + s_{\omega, \mathbb R}.
    \end{align*}
    This concludes the proof.
\end{proof}

We reformulate the variational problem \eqref{eqMinAction} in a more convenient way. 

\begin{lemma} \label{lemEquivalentPB}
    The variational problem
    \begin{equation} \label{eqMinAction2}
        \inf \left\{ \frac{1}{4} \| v \|_{L^4(\mathcal{G}_L)}^4 : v \in H_D^1(\mathcal{G}_L) \text{ and } I_{\omega, \gamma, L}(v) \leq 0 \right\}.
    \end{equation}
    is equivalent to \eqref{eqMinAction}. Furthermore, if a minimizer exists, then it can be chosen to be non-negative.
\end{lemma}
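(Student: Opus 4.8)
The plan is to prove the equivalence of the two minimization problems by showing that the infimum values coincide and that any minimizer of one can be transformed into a minimizer of the other. First I would observe that the constraint set of the original problem \eqref{eqMinAction} (where $I_{\omega, \gamma, L}(v) = 0$) is contained in the constraint set of \eqref{eqMinAction2} (where $I_{\omega, \gamma, L}(v) \leq 0$), so that the infimum in \eqref{eqMinAction2} is at most $s_{\omega, \gamma, L}$. The substance of the argument is the reverse inequality: given any $v \in H_D^1(\mathcal{G}_L)$ with $v \not\equiv 0$ and $I_{\omega, \gamma, L}(v) \leq 0$, I must produce a point on the Nehari manifold whose $L^4$-norm is no larger.

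The key step is the standard scaling trick. For $v \in H_D^1(\mathcal{G}_L)$ with $v \not\equiv 0$, I would consider the function $t \mapsto I_{\omega, \gamma, L}(tv)$ and note that
\[
    I_{\omega, \gamma, L}(tv) = t^2 \left( Q_{\operatorname{H}_\gamma}(v) + \omega \|v\|_{L^2(\mathcal{G}_L)}^2 \right) - t^4 \|v\|_{L^4(\mathcal{G}_L)}^4.
\]
Since $\gamma > 0$ the quadratic part is strictly positive for $v \not\equiv 0$, so there is a unique $t_v > 0$ with $I_{\omega, \gamma, L}(t_v v) = 0$, namely $t_v^2 = (Q_{\operatorname{H}_\gamma}(v) + \omega \|v\|_{L^2}^2) / \|v\|_{L^4}^4$. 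The condition $I_{\omega, \gamma, L}(v) \leq 0$ translates into $t_v \leq 1$, whence $\|t_v v\|_{L^4(\mathcal{G}_L)}^4 = t_v^4 \|v\|_{L^4(\mathcal{G}_L)}^4 \leq \|v\|_{L^4(\mathcal{G}_L)}^4$. Thus $t_v v \in \mathcal N_{\omega, \gamma, L}$ has a smaller-or-equal $L^4$-functional value, which gives the reverse inequality between the infima and shows that a minimizer of \eqref{eqMinAction2} projects to a minimizer of \eqref{eqMinAction}.

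For the non-negativity claim I would use the diamagnetic-type inequality: passing from $v = (v_c, v_h)$ to $|v| = (|v_c|, |v_h|)$ preserves the $L^2$- and $L^4$-norms and does not increase $\|v'\|_{L^2(\mathcal{G}_L)}^2$, while the vertex term $\gamma |v(\operatorname{v})|^2$ is also preserved since $\gamma > 0$ and $||v|(\operatorname{v})| = |v(\operatorname{v})|$. One must check that $|v|$ remains in $H_D^1(\mathcal{G}_L)$, i.e. that it still satisfies the continuity condition at the vertex; this follows because taking absolute values commutes with evaluation at $\operatorname{v}$ and the common value condition is preserved. Consequently $I_{\omega, \gamma, L}(|v|) \leq I_{\omega, \gamma, L}(v) \leq 0$, so $|v|$ is admissible for \eqref{eqMinAction2} with the same $L^4$-value, and after rescaling by $t_{|v|}$ one obtains a non-negative minimizer.

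The main obstacle I anticipate is verifying that the absolute-value operation genuinely keeps the function in the form domain $H_D^1(\mathcal{G}_L)$ and does not increase the kinetic energy on each edge; this is routine on $H^1$ of an interval or half-line (since $|v|' = \operatorname{sgn}(v) v'$ almost everywhere), but care is needed to confirm that the vertex continuity constraint and the quadratic form's vertex contribution behave correctly under $v \mapsto |v|$. Once this is settled, the equivalence and the non-negativity both follow from the scaling argument above.
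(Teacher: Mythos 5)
Your proof is correct and follows essentially the same route as the paper: the identity \eqref{eqBoundAction} together with the inclusion of constraint sets gives one inequality, the scaling $t_v v$ onto the Nehari manifold (with $t_v \leq 1$ when $I_{\omega,\gamma,L}(v) \leq 0$) gives the reverse, and the passage $v \mapsto |v|$ yields non-negativity. Your additional verification that $|v|$ remains in $H_D^1(\mathcal{G}_L)$ with non-increased kinetic energy is a point the paper's proof leaves implicit, so nothing is missing.
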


\begin{proof}
    We first observe that from \eqref{eqBoundAction} we have
    \begin{equation} \label{eqMinActionGeq}
        s_{\omega, \gamma, L} \geq \inf \left\{ \frac{1}{4} \| v \|_{L^4(\mathcal{G}_L)}^4 : v \in H_D^1(\mathcal{G}_L) \text{ and } I_{\omega, \gamma, L}(v) \leq 0 \right\}
    \end{equation}
     For $v \in H_D^1(\mathcal{G}_L)$ such that $I_{\omega, \gamma, L}(v) \leq 0$, we have
    \[
        I_{\omega, \gamma, L}(t v) = t^2 \left( \|v'\|_{L^2(\mathcal{G}_L)}^2 + \omega \|v\|_{L^2(\mathcal{G}_L)}^2 + \gamma |v(0)|^2 \right) - t^4 \|v\|_{L^4(\mathcal{G}_L)}^4
    \]
    for $t \geq 0$. Therefore, there exists $0 \leq t^* < 1$ such that $I_{\omega, \gamma, L}(t^* v) = 0$ and
    \begin{align*}
        s_{\omega, \gamma, L} \leq S_{\omega, \gamma, L}(t^* v)
        = \frac{(t^*)^4}{4} \| v \|_{L^4(\mathcal{G}_L)}^4
        < \frac{1}{4} \| v \|_{L^4(\mathcal{G}_L)}^4 %
    \end{align*}
    so that
    \begin{equation} \label{eqMinActionLeq}
        s_{\omega, \gamma, L} \leq \inf \left\{ \frac{1}{4} \| v \|_{L^4(\mathcal{G}_L)}^4 : v \in H_D^1(\mathcal{G}_L) \text{ and } I_{\omega, \gamma, L}(v) \leq 0 \right\}.
    \end{equation}
    We obtain
    \begin{equation*} 
        s_{\omega, \gamma, L} = \inf \left\{ \frac{1}{4} \| v \|_{L^4(\mathcal{G}_L)}^4 : v \in H_D^1(\mathcal{G}_L) \text{ and } I_{\omega, \gamma, L}(v) \leq 0 \right\}
    \end{equation*}
    by combining \eqref{eqMinActionGeq} and \eqref{eqMinActionLeq}. Finally, let $u$ be a solution to the variational problem \eqref{eqMinAction}. Observe that
    \[
        I_{\omega, \gamma, L} \left( |u| \right) \leq I_{\omega, \gamma, L} \left( u \right), \quad \left\| |u| \right\|_{L^4(\mathcal G)}^4 = \| u \|_{L^4(\mathcal G)}^4,
    \]
    Hence $|u|$, which is nonnegative, is also a minimizer. This concludes the proof.
\end{proof}

We now state the following proposition, giving the existence of an action ground state for given sets of parameters $\omega$, $\gamma$ and $L$. The proof is based on the computation of the Nehari functional and the action of a suitable competitor in order to satisfy the existence criterion given by Lemma \ref{lemEquivalentPB}.

\begin{proposition} \label{propExistence}
    The following assertions hold:
    \begin{enumerate}[label=(\roman*)]
        \item For $\omega > 0$, $0 < \gamma < ( 1 - \sqrt{3}/2 )^{1/3} \sqrt{\omega}$ and $L < L^*$, where $L^* = L^*(\omega, \gamma)$ is given by
        \begin{equation} \label{eqSmallL}
            L^* = \frac{\left( 2 - \sqrt{3} \right) \omega^{\frac{3}{2}} - 2 \gamma^3}{6 \omega^2},
        \end{equation}
        the variational problem \eqref{eqMinAction} admits a minimizer.
        \item For $\omega > 0$, $0 < \gamma < \sqrt{\omega}$ and $L \geq L^{**}$, where $L^{**} = L^{**}(\omega, \gamma)$ is given by
        \begin{equation} \label{eqBigL}
            L^{**} = \frac{1}{\sqrt{\omega}} \operatorname{arctanh} \left( \frac{\gamma}{\sqrt{\omega}} \right),
        \end{equation}
        the variational problem \eqref{eqMinAction} admits a minimizer.
    \end{enumerate}
\end{proposition}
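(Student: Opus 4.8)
The plan is to verify, in each of the two regimes, the hypothesis of Lemma~\ref{lemExistenceCriterion}, namely $s_{\omega,\gamma,L} < \tfrac{4}{3}\omega^{3/2}$. By the reformulation in Lemma~\ref{lemEquivalentPB} it suffices to exhibit a competitor $v \in H_D^1(\mathcal{G}_L)$ satisfying $I_{\omega,\gamma,L}(v) \le 0$ together with $\tfrac14\|v\|_{L^4(\mathcal{G}_L)}^4 < \tfrac43\omega^{3/2}$: such a $v$ is admissible for \eqref{eqMinAction2}, whence $s_{\omega,\gamma,L} \le \tfrac14\|v\|_{L^4(\mathcal{G}_L)}^4 < \tfrac43\omega^{3/2}$. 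In both cases the competitor is built from the line soliton $\phi(x)=\sqrt{2\omega}\sech(\sqrt{\omega}x)$ of \eqref{eqGSLine}, and the computation of $I_{\omega,\gamma,L}$ rests on one elementary identity: if $\phi$ solves \eqref{eqNLS1D}, then $\phi'^2+\omega\phi^2-\phi^4 = \phi'^2+\phi\phi''$, so that on any interval $\int_a^b(\phi'^2+\omega\phi^2-\phi^4)\,dx = [\phi\phi']_a^b$. This turns the bulk part of the Nehari functional into boundary data evaluated at the vertex, which is what makes the thresholds explicit.

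For the large-loop case (ii), I would place the peak of $\phi$ at the center of the loop: set $v_c = \phi$ on $[-L,L]$ and $v_h(x)=\phi(x+L)$ on $[0,\infty)$. Evenness of $\phi$ gives $v_c(\pm L)=\phi(L)=v_h(0)$, so $v \in H_D^1(\mathcal{G}_L)$. Summing the loop and half-line contributions over $(-L,\infty)$ and applying the identity above yields
\[
 I_{\omega,\gamma,L}(v) = -\phi(-L)\phi'(-L) + \gamma\,\phi(L)^2 = 2\omega\,\sech^2(\sqrt{\omega}L)\bigl(\gamma - \sqrt{\omega}\tanh(\sqrt{\omega}L)\bigr).
\]
Hence $I_{\omega,\gamma,L}(v)\le 0$ precisely when $\tanh(\sqrt{\omega}L)\ge \gamma/\sqrt{\omega}$, i.e. $L \ge L^{**}$ as in \eqref{eqBigL} (which forces $\gamma<\sqrt{\omega}$ for $L^{**}$ to be defined). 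Moreover $\|v\|_{L^4(\mathcal{G}_L)}^4 = \int_{-L}^\infty \phi^4 < \int_{\mathbb R}\phi^4 = \tfrac{16}{3}\omega^{3/2}$, so the action bound $\tfrac14\|v\|_{L^4(\mathcal{G}_L)}^4 < \tfrac43\omega^{3/2}$ holds automatically, which closes case (ii).

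For the small-loop case (i), the naive choice (the half-soliton $\phi$ on $h$ with its peak sitting at the vertex, completed on the loop) fails: its vertex value is maximal, so the repulsive term contributes $\gamma\,\phi(0)^2 = 2\gamma\omega > 0$ while the half-line bulk vanishes because $\phi'(0)=0$, forcing $I_{\omega,\gamma,L}(v) > 0$ as $L\to 0$. The remedy is to keep the competitor close to the half-soliton but to push its peak a small distance $x_0>0$ into the half-line, using a bump-type profile $v_h(x)=\phi(x-x_0)$; this lowers the vertex value to $p_0=\phi(x_0)<\sqrt{2\omega}$ and makes the half-line bulk negative, $-\phi(-x_0)\phi'(-x_0)<0$, while the loop is completed by a simple (e.g. constant $\equiv p_0$) profile contributing only $O(L)$. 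Writing everything in terms of $s=\sech(\sqrt{\omega}x_0)$, $t=\tanh(\sqrt{\omega}x_0)$ and $L$ (again via the boundary identity and the primitive $\int\sech^4 = \tanh - \tfrac13\tanh^3$), and then imposing $I_{\omega,\gamma,L}(v)\le 0$ and $\tfrac14\|v\|_{L^4(\mathcal{G}_L)}^4 < \tfrac43\omega^{3/2}$, one reaches — for an appropriate choice of the displacement — the explicit threshold $L^*$ of \eqref{eqSmallL}; the admissibility condition $0<\gamma<(1-\sqrt3/2)^{1/3}\sqrt{\omega}$ is exactly the requirement that the numerator of $L^*$ be positive.

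The main obstacle is case (i). The sign of the repulsive $\delta$ is precisely what makes the obvious competitor inadmissible for small $L$, so the construction must trade a reduced vertex value against a larger $L^4$-deficit, and the $\sech$/$\tanh$ bookkeeping that pins down the constants $2-\sqrt3$ and $2\gamma^3$ appearing in \eqref{eqSmallL} is where the real work lies; case (ii), by contrast, is essentially a one-line computation once the soliton-centered-on-the-loop competitor is identified. A secondary point to verify in both cases is the \emph{strictness} of $\tfrac14\|v\|_{L^4(\mathcal{G}_L)}^4<\tfrac43\omega^{3/2}$, since it is the strict inequality that the criterion of Lemma~\ref{lemExistenceCriterion} requires.
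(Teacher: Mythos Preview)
Your strategy coincides with the paper's: invoke Lemma~\ref{lemExistenceCriterion} and Lemma~\ref{lemEquivalentPB}, then build a competitor whose Nehari functional is nonpositive and whose $L^4$-norm is strictly below the soliton value. Case~(ii) is handled identically --- soliton centered on the loop, tail on the half-line --- and your computation of $I_{\omega,\gamma,L}(v)$ and the strict $L^4$ bound are exactly the paper's.

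For case~(i) your outline is also the paper's construction (bump-type $\phi(\cdot-x_0)$ on the half-line, constant $p_0=\phi(x_0)$ on the loop), but you leave the ``appropriate choice of the displacement'' unspecified, and this is where the actual work sits. The paper fixes $x_0$ by the Robin condition $v_h'(0)=\gamma v_h(0)$, i.e.\ $\tanh(\sqrt{\omega}\,x_0)=\gamma/\sqrt{\omega}$, so that $p_0=\sqrt{2(\omega-\gamma^2)}$. The point of this specific choice is that the half-line bulk together with the vertex term then cancel \emph{exactly}:
\[
\|v_h'\|_{L^2}^2+\omega\|v_h\|_{L^2}^2-\|v_h\|_{L^4}^4+\gamma|v_h(0)|^2=0,
\]
so $I_{\omega,\gamma,L}(v)$ reduces to the loop contribution alone, namely $-4L(\omega-\gamma^2)(\omega-2\gamma^2)\le 0$ (here $\gamma<(1-\sqrt3/2)^{1/3}\sqrt{\omega}<\sqrt{\omega/2}$ is used). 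The $L^4$ estimate is then bounded via $(1-\gamma^2/\omega)^2\le 1$ and the elementary maximum $\tfrac{\gamma}{\sqrt{\omega}}(1-\gamma^2/\omega)\le 1/\sqrt3$, which is what produces the constants $2-\sqrt3$ and $2\gamma^3$ in \eqref{eqSmallL}. Without pinning down $x_0$ this way you would be left with a two-parameter optimization in $(x_0,L)$; once you impose the Robin match the computation is short and the threshold is explicit.
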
 

\begin{proof}
    By Lemma \ref{lemExistenceCriterion}, Lemma \ref{lemEquivalentPB} and \eqref{eqMinActionLine2}, it is enough to find a candidate $v \in H_D^1(\mathcal{G}_L)$ which satisfies the properties
    \begin{equation} \label{eqConditionNF}
        I_{\omega, \gamma, L}(v) \leq 0
    \end{equation}
    and
    \begin{equation} \label{eqConditionAc}
        \frac{1}{4} \| v \|_{L(\mathcal{G})}^4 < \frac{4}{3} \omega^\frac{3}{2}.
    \end{equation}
    
    In order to prove $(i)$, we define $v= (v_c, v_h) \in H_D^1(\mathcal{G}_L)$ by
    \begin{equation} \label{eqCandidate1}
        \begin{aligned}
            v_c (x) & = \sqrt{2(\omega - \gamma^2)} & \text{ for } x & \in [-L, L], \\
            v_h (x) & = \sqrt{2\omega} \operatorname{sech} \left( \sqrt{\omega} x - \operatorname{arctanh} \left( \frac{\gamma}{\sqrt{\omega}}\right) \right) & \text{ for } x & \in [0, \infty).
        \end{aligned}
    \end{equation}
    Observe that, in order for $v$ to be well-defined, the condition
    \begin{equation} \label{eqCondition1}
        \gamma^2 < \omega
    \end{equation}
    must hold. Furthermore, observe that $v_h$ solve the nonlinear Schrödinger equation with Robin-boundary condition
    \[
        v_h'(0) = \gamma v_h(0),
    \]
    in particular
    \[
        \omega \| v_h \|_{L^2(\mathbb R_+)}^2 + \| v_h' \|_{L^2(\mathbb R_+)}^2 - \| v_h \|_{L^4(\mathbb R_+)}^4 + \gamma \left| v_h(0) \right|^2 = 0.
    \]
    Then,
    \begin{equation*}
        \begin{aligned}
            I_{\omega, \gamma, L} (v) & = \omega \| v_c \|_{L^2(-L,L)}^2 + \| v_c' \|_{L^2(-L,L)}^2 - \| v_c \|_{L^4(-L,L)}^4 \\
            & = -4L \left( \omega - \gamma^2 \right) \left( \omega-2\gamma^2 \right),
        \end{aligned}
    \end{equation*}
    so that \eqref{eqConditionNF} holds if and only if
    \begin{equation} \label{eqCondition2}
        \gamma \leq \sqrt{\frac{\omega}{2}}.
    \end{equation}
    Furthermore,
    \[
        \frac{1}{4} \| v \|_{L(\mathcal{G})}^4 = 
        2L \omega^2 \left( 1 - \frac{\gamma^2}{\omega} \right)^2 + \left( \left( \frac{2}{3} + \frac{\gamma}{\sqrt{\omega}} \left( 1 - \frac{\gamma^2}{\omega} \right) \right)\omega^\frac{3}{2}+\frac{2\gamma^3}{3} \right).
    \]
    By \eqref{eqCondition1}, we have
    \begin{equation*}
        \frac{\gamma}{\sqrt{\omega}} \left( 1 - \frac{\gamma^2}{\omega} \right) \leq \frac{1}{\sqrt{3}} < \frac{2}{3}, \quad \left( 1 - \frac{\gamma^2}{\omega} \right)^2 \leq 1,
    \end{equation*}
    so
    \[
        \frac{1}{4} \| v \|_{L^4(\mathcal G)}^4 \leq 2L \omega^2 \left( 1 - \frac{\gamma^2}{\omega} \right)^2 +\frac{2 + \sqrt{3}}{3} \omega^{\frac{3}{2}}+ \frac{2}{3} \gamma^3.
    \]
    Thus, if
    \begin{equation} \label{eqCondition3bis}
        2L \omega^2 + \frac{2}{3} \gamma^3 < \frac{2 - \sqrt{3}}{3} \omega^{\frac{3}{2}}
    \end{equation}
    holds, then \eqref{eqConditionAc} holds. The condition \eqref{eqCondition3bis} is equivalent to the condition
    \begin{equation} \label{eqCondition3}
        L < \frac{\left( 2 - \sqrt{3} \right) \omega^{\frac{3}{2}} - 2 \gamma^3}{6 \omega^2}.
    \end{equation}
    Observe that \eqref{eqCondition3} makes sense if and only if the right-hand side term is positive, so that if and only if
    \[
        \left( 2 - \sqrt{3} \right) \omega^{\frac{3}{2}} - 2 \gamma^3 > 0,
    \]
    or equivalently
    \begin{equation} \label{eqCondition4}
        \gamma < \left( \frac{2 - \sqrt{3}}{2} \right)^{\frac{1}{3}} \sqrt{\omega}.
    \end{equation}
    By \eqref{eqCondition4} and \eqref{eqCondition3}, $v$ satisfies \eqref{eqConditionNF} and \eqref{eqConditionAc}. This concludes the proof of $(i)$.
    
    In order to prove $(ii)$, we define $v= (v_c, v_h) \in H_D^1(\mathcal{G}_L)$ by
    \begin{equation} \label{eqCandidate2}
        \begin{aligned}
            v_c (x) & = \sqrt{2\omega} \operatorname{sech} \left( \sqrt{\omega} x \right) & \text{ for } x & \in [-L, L], \\
            v_h (x) & = \sqrt{2\omega} \operatorname{sech} \left( \sqrt{\omega} \left( x + L \right) \right) & \text{ for } x & \in [0, \infty).
        \end{aligned}
    \end{equation}
    Then,
    \[
        I_{\omega, \gamma, L} (v) = 2\gamma \left( 1 - \operatorname{tanh} \left( \sqrt{\omega} L \right)^2 \right) \omega - 2 \operatorname{tanh} \left( \sqrt{\omega} L \right) \left( 1 - \operatorname{tanh} \left( \sqrt{\omega} L \right)^2 \right) \omega^\frac{3}{2}
    \]
    so that $I_{\omega, \gamma, L} (v) \leq 0$ if and only if
    \[
        \gamma \leq \sqrt{\omega} \operatorname{tanh} \left( \sqrt{\omega} L \right),
    \]
    which is implied by the conditions
    \begin{equation} \label{eqCondition5}
        L \geq \frac{1}{\sqrt{\omega}} \operatorname{arctanh} \left( \frac{\gamma}{\sqrt{\omega}} \right)
    \end{equation}
    and
    \begin{equation} \label{eqCondition6}
        \gamma < \sqrt{\omega}.
    \end{equation}
    Finally, we have that
    \[
        \frac{1}{4} \| v \|_{L^4(\mathcal{G}_L)}^4 = \int_{-L}^\infty \phi(x)^4 dx < \int_{-\infty}^\infty \phi(x)^4 dx = \frac{4}{3} \omega^\frac{3}{2},
    \]
    where $\phi$ has been defined in \eqref{eqGSLine}. By \eqref{eqCondition5} and \eqref{eqCondition6}, $v$ satisfies \eqref{eqConditionNF} and \eqref{eqConditionAc}. This concludes the proof of $(ii)$.
\end{proof}

\begin{remark}
    As the length of the compact edge shrinks to $0$, we can suppose that the ground state on $\mathcal G$ will be similar to the ground state on the half-line with the same $\delta$ condition. Thus, the candidate $v$ given by \eqref{eqCandidate1} has been chosen because it is similar to the ground state on the half-line with Robin-boundary condition at $0$, as $v_h'(0) = \gamma v_h(0)$.
    
    As the length of the compact edge goes to $\infty$, we can suppose that the ground state on $\mathcal G$ will be close to the hyperbolic secant on the compact edge and a tail of hyperbolic secant on the half-line. This justifies the choice of the candidate $v$ given by \eqref{eqCandidate2} for large length.
\end{remark}

\section{Analysis of non-negative stationary states}
\label{sec:non_negative}

In this section, we study the shape of the ground states of \eqref{eqMinAction} under the conditions of Proposition \ref{propExistence}. We start by studying the shape of solutions of \eqref{eqNLS} which are even on the compact edge. We show that such solutions do exist in a finite number and study their shape by a careful analysis of the phase portrait of stationary nonlinear Schrödinger equation. We then study the solutions which are not even on the compact edge. Finally, we prove Theorem \ref{thmMain}.

As Lemma \ref{lemEquivalentPB} holds, we can restrict ourselves to the study of nonnegative stationary states to determine the shape of the ground states. This is why, in what follows, a solution $u = (u_c, u_h)$ of \eqref{eqNLS} is always supposed to be non-negative. In particular, we have
\[
    u_c(-L) = u_c(L) = u_h(0) \geq 0.
\]

\subsection{Even stationary states}

In this subsection, we study the shape of the positive solutions of \eqref{eqNLS} which are even on the compact edge $c$. We define $q_{\omega, \gamma, +}$ and $q_{\omega, \gamma, -}$ for $p \in [0, \sqrt{2\omega}]$ by
\[
    q_{\omega, \gamma, +}(p) = \frac{p}{2} \left( \gamma + \sqrt{\omega - \frac{p^2}{2}} \right), \quad q_{\omega, \gamma, -}(p) = \frac{p}{2} \left( \gamma - \sqrt{\omega - \frac{p^2}{2}} \right).
\]
For $\gamma > 0$, we define the \emph{$\Gamma_{\omega}$-curve of level $\gamma$} as the set
\begin{equation} \label{eqGammaCurve}
    \Gamma_{\omega, \gamma} = \left\{ \left( p, q_{\omega, \gamma, +}(p) \right) : p \in \left( 0, \sqrt{2\omega} \right] \right\} \cup \left\{ \left( p, q_{\omega, \gamma, -}(p) \right) : p \in \left( 0, \sqrt{2\omega} \right] \right\}.
\end{equation}

We will show in Lemma \ref{lemShapeLine} that the $\delta$-vertex condition imposes to a solution $u = (u_c, u_h)$ of \eqref{eqNLS} such that $u_c$ is even on the compact edge $c$ to satisfy $(u_c(-L), u_c'(-L)) \in \Gamma_{\omega, \gamma}$. We first state the following lemma in order to define subsets of $\Gamma_{\omega, \gamma}$ useful for later.

\begin{lemma} \label{lemCurvePos}
    Let $\omega > 0$ and $0 < \gamma < \sqrt{\omega}$ be fixed. Then
    \begin{equation} \label{eqSignQ}
        \begin{aligned}
            & q_{\omega, \gamma, -} \left( 0 \right) = 0, \quad q_{\omega, \gamma, -} \left( \sqrt{2 \left( \omega - \gamma^2 \right)} \right) = 0, \\
            & q_{\omega, \gamma, -} (p) < 0 \text{ for } p \in \left( 0, \sqrt{2(\omega - \gamma^2)} \right], \\
            & q_{\omega, \gamma, -} (p) > 0 \text{ for } p  \in \left( \sqrt{2(\omega - \gamma^2)}, \sqrt{2\omega} \right].
        \end{aligned}
    \end{equation}
\end{lemma}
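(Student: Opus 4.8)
The lemma asks me to verify elementary sign properties of the explicit function
\[
    q_{\omega, \gamma, -}(p) = \frac{p}{2} \left( \gamma - \sqrt{\omega - \frac{p^2}{2}} \right),
\]
so the plan is simply to read off the zeros and the sign from the explicit factorized form. Since the factor $p/2$ is positive on $(0, \sqrt{2\omega}]$, the sign of $q_{\omega, \gamma, -}(p)$ on this interval is entirely governed by the sign of the bracket
\[
    g(p) := \gamma - \sqrt{\omega - \frac{p^2}{2}},
\]
and I would locate its zero and monitor its sign.

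**Plan.** First I would record the endpoint/zero values: at $p = 0$ the factor $p/2$ vanishes, giving $q_{\omega, \gamma, -}(0) = 0$. Next I would solve $g(p) = 0$, i.e. $\sqrt{\omega - p^2/2} = \gamma$; squaring (valid since $0 < \gamma < \sqrt{\omega}$ guarantees both sides nonnegative and the radicand positive at the solution) gives $\omega - p^2/2 = \gamma^2$, hence $p^2 = 2(\omega - \gamma^2)$, i.e. $p = \sqrt{2(\omega - \gamma^2)}$. This is the claimed second zero, and I should note it lies in $(0, \sqrt{2\omega}]$ precisely because $0 < \gamma < \sqrt{\omega}$ ensures $0 < 2(\omega - \gamma^2) < 2\omega$.

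**Sign analysis.** For the sign, I observe that $p \mapsto \sqrt{\omega - p^2/2}$ is strictly decreasing on $[0, \sqrt{2\omega}]$, so $g$ is strictly increasing there; therefore $g$ changes sign exactly once, at $p = \sqrt{2(\omega - \gamma^2)}$, being negative to the left and positive to the right. Combined with $p/2 > 0$ on $(0, \sqrt{2\omega}]$, this yields
\[
    q_{\omega, \gamma, -}(p) < 0 \quad \text{for } p \in \left( 0, \sqrt{2(\omega - \gamma^2)} \right), \qquad q_{\omega, \gamma, -}(p) > 0 \quad \text{for } p \in \left( \sqrt{2(\omega - \gamma^2)}, \sqrt{2\omega} \right],
\]
with equality at the two stated zeros, which is exactly the claim.

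**Main obstacle.** There is essentially no obstacle here; this is a routine sign computation on an explicit elementary function, and the only point requiring the smallest care is the placement of the zero $\sqrt{2(\omega - \gamma^2)}$ inside the admissible interval, which follows directly from the hypothesis $0 < \gamma < \sqrt{\omega}$. I would present the argument in two or three lines, emphasizing the monotonicity of the square-root term to justify the single sign change.
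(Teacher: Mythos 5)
Your proof is correct, but it follows a genuinely different (and slightly more economical) route than the paper's. The paper differentiates $q_{\omega,\gamma,-}$, checks that $q_{\omega,\gamma,-}'(0) = \tfrac{1}{2}(\gamma - \sqrt{\omega}) < 0$ and $q_{\omega,\gamma,-}'\bigl(\sqrt{2(\omega-\gamma^2)}\bigr) = \tfrac{1}{2}\tfrac{\omega-\gamma^2}{\gamma} > 0$, observes that $0$ and $\sqrt{2(\omega-\gamma^2)}$ are the only zeros, and then propagates the local sign behaviour at each zero to the whole subinterval using continuity and the absence of further zeros. You instead factor $q_{\omega,\gamma,-}(p) = \tfrac{p}{2}\,g(p)$ with $g(p) = \gamma - \sqrt{\omega - p^2/2}$, note that $g$ is strictly increasing on $[0,\sqrt{2\omega}]$ (since the square-root term is strictly decreasing), locate its unique zero at $\sqrt{2(\omega-\gamma^2)}$, and read the sign of $q_{\omega,\gamma,-}$ directly off the factorization. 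What your approach buys is the elimination of both the derivative computation and the implicit intermediate-value step: strict monotonicity of $g$ guarantees exactly one sign change with no further argument. One small remark: the lemma as stated asserts $q_{\omega,\gamma,-}(p) < 0$ on the half-open interval $\bigl(0, \sqrt{2(\omega-\gamma^2)}\,\bigr]$ while also asserting that $q_{\omega,\gamma,-}$ vanishes at that right endpoint; this is an endpoint typo in the paper (its own proof has the same slip), and the strict inequality on the open interval together with the two zeros — which is exactly what your proof delivers — is the correct content.
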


\begin{proof}
    We have
    \[
        q_{\omega, \gamma, -}'(p) = \frac{1}{2} \gamma - \frac{1}{2} \sqrt{\omega - \frac{p^2}{2}} + \frac{p^2}{4\sqrt{\omega - \frac{p^2}{2}}}
    \]
    for $p \in [0, \sqrt{2\omega})$, in particular, as $\gamma < \sqrt{\omega}$,  we get
    \begin{align}
        & q_{\omega, \gamma, -}'(0) = \frac{1}{2} \left(\gamma-\sqrt{\omega} \right) < 0, \label{eqVarQ1} \\
        & q_{\omega, \gamma, -}'\left( \sqrt{2(\omega - \gamma^2)} \right) = \frac{1}{2} \frac{\omega - \gamma^2}{\gamma}  > 0. \label{eqVarQ2}
    \end{align}
Furthermore, observe that
    \(
        q_{\omega, \gamma, -}(p) = 0
    \)
    if and only if $p = 0$ or $p = \sqrt{2(\omega - \gamma^2)}$. Thus, we get that $q_{\omega, \gamma, -} (p) < 0$ for $p \in ( 0, \sqrt{2(\omega - \gamma^2)} ]$ by \eqref{eqVarQ1} and that $q_{\omega, \gamma, -} (p) > 0$ for $p \in ( \sqrt{2(\omega - \gamma^2)}, \sqrt{2\omega} ]$ by \eqref{eqVarQ2}. This concludes the proof.
\end{proof}

We define the sets $\Gamma_{\omega, \gamma}^1$, $\Gamma_{\omega, \gamma}^2$ and $\Gamma_{\omega, \gamma}^3$ by
\begin{equation} \label{eqGammaCurve1}
    \begin{aligned}
        \Gamma_{\omega, \gamma}^1 & = \left\{ \left( p, q_{\omega, \gamma, +}(p) \right) : p \in \left( 0, \sqrt{2\omega} \right]  \right\}, \\
        \Gamma_{\omega, \gamma}^2 & = \left\{ \left( p, q_{\omega, \gamma, -}(p) \right) : p \in \left[ \sqrt{2(\omega - \gamma^2)}, \sqrt{2\omega} \right] \right\}, \\
        \Gamma_{\omega, \gamma}^3 & = \left\{ \left( p, q_{\omega, \gamma, -}(p) \right) : p \in \left( 0, \sqrt{2(\omega - \gamma^2)} \right] \right\}.
    \end{aligned}
\end{equation}
By \eqref{eqSignQ}, we immediately have
\begin{equation} \label{eqGammaCurve2}
    \Gamma_{\omega, \gamma} = \Gamma_{\omega, \gamma}^1 \cup \Gamma_{\omega, \gamma}^2 \cup \Gamma_{\omega, \gamma}^3.
\end{equation}

Figure \ref{figGammaCurve} illustrates a $\Gamma_{\omega}$-curve of level $\gamma$, and the $\mathcal{E}_{\omega}$-curve of level $0$. The sets $\Gamma_{\omega, \gamma}^1$,     $\Gamma_{\omega, \gamma}^2$ and $\Gamma_{\omega, \gamma}^3$ defined in \eqref{eqGammaCurve1} have been represented. The set of parameters chosen is $\omega =1, \gamma = 1/2$.

\begin{figure}[ht]
    \centering
    \includegraphics[width=0.5\linewidth]{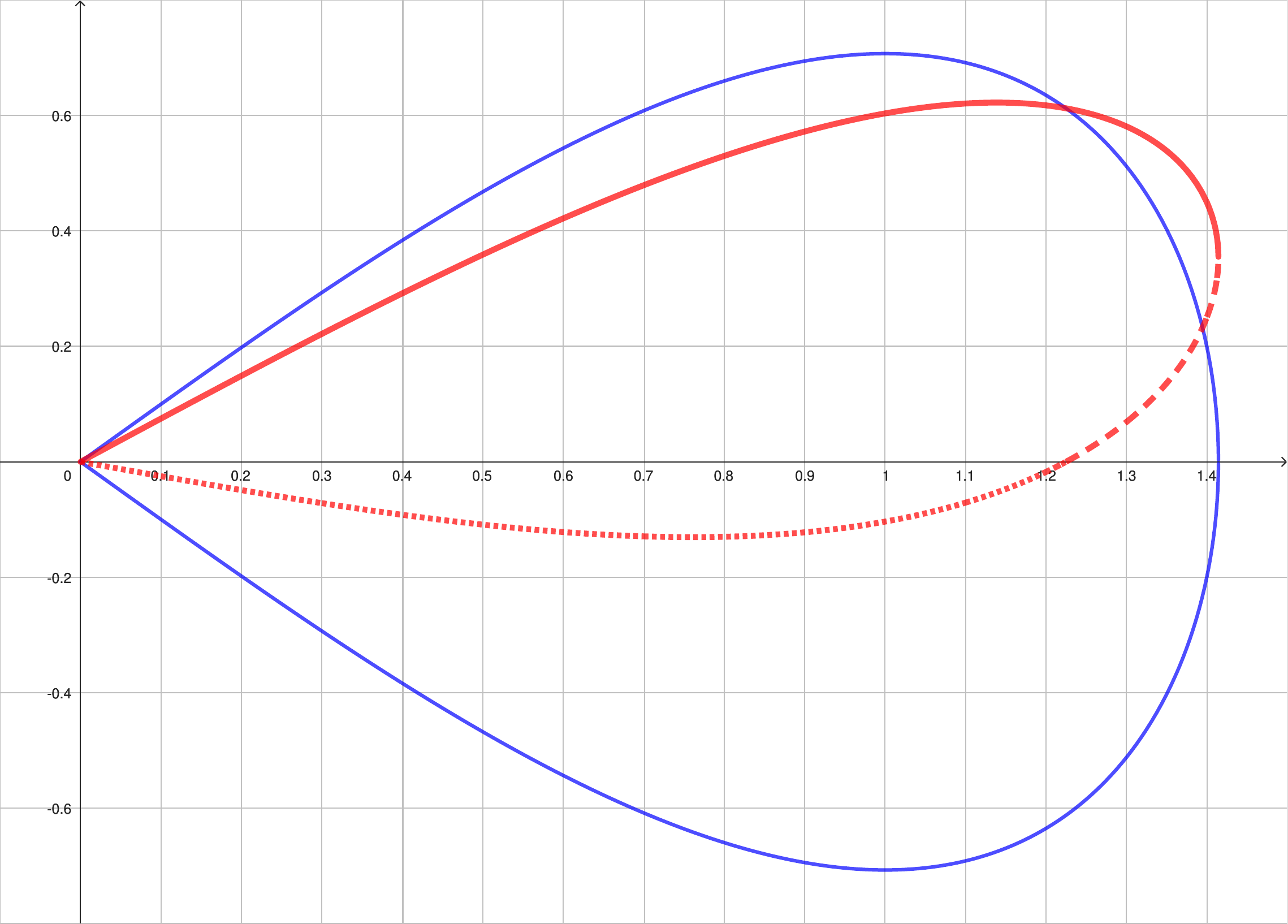}
    \caption{$\mathcal E_\omega$-curve of level $0$ (blue) and $\Gamma_\omega$-level curve of level $\gamma$ (red); $\Gamma_{\omega, \gamma}^1$ is the continuous curve, $\Gamma_{\omega, \gamma}^2$ is the large-dashed curve and $\Gamma_{\omega, \gamma}^3$ is the small-dashed curve. The parameters are $\omega = 1$ and $\gamma = 1/2$.}
    \label{figGammaCurve}
\end{figure}

Let $u = (u_c, u_h)$ be a solution of \eqref{eqNLS} such that $u_c$ is even on the compact edge $c$. The following lemma states that the point $(u_c(-L), u_c'(-L))$ belongs to the $\Gamma_\omega$-curve of level $\gamma$. Furthermore, the shape of $u_h$ is determined by the position of the point $(u_c(-L), u_c'(-L))$ on the phase portrait. The terms tail-type solution, half soliton-type solution and bump-type solution have been defined in \eqref{eqSolSech}.

\begin{lemma} \label{lemShapeLine}
    Let $\omega > 0$, $0 < \gamma < \sqrt{\omega}$ and $L > 0$ be fixed. Assume that $u = (u_c, u_h)$ is a solution of \eqref{eqNLS} such that $u_c$ is even on $c$. Then,
    \begin{equation} \label{eqShapeEvenCurve}
        (u_c(-L), u_c'(-L)) \in \Gamma_{\omega, \gamma}.
    \end{equation}
    Furthermore, the following property holds:
    \begin{enumerate}[label=(\roman*)]
        \item if $(u_c(-L), u_c'(-L)) \in \Gamma_{\omega, \gamma}^1 \setminus \{ (\sqrt{2\omega}, \gamma \omega / \sqrt{2}) \} $, then $u_h$ is a tail-type solution;
        \item if $(u_c(-L), u_c'(-L)) = (\sqrt{2\omega}, \gamma \omega / \sqrt{2})$, then $u_h$ is a half soliton-type solution;
        \item if $(u_c(-L), u_c'(-L)) \in \Gamma_{\omega, \gamma}^2 \cup \Gamma_{\omega, \gamma}^3 \setminus \{ (\sqrt{2\omega}, \gamma \omega / \sqrt{2}) \}$, then $u_h$ is a bump-type solution.
    \end{enumerate}
\end{lemma}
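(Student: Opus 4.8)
The plan is to reduce the statement to explicit computations on the two branches of $\mathcal G_L$, using that on each edge $u$ solves the autonomous ODE \eqref{eqNLS1D}, and then to read off the shape of $u_h$ from the sign of a single scalar. First I would record the consequences of the hypotheses on the half-line. There $u_h \in H^2(0,\infty)$ solves \eqref{eqNLS1D} and decays at infinity, so Remark \ref{rkSech} forces $\mathcal E_\omega(u_h(0),u_h'(0)) = 0$ and $u_h$ to be a hyperbolic secant. Writing $p := u_h(0) = u_c(-L) = u_c(L)$ for the value at the vertex, which is nonnegative by the standing nonnegativity assumption, this reads
\begin{equation*}
    u_h'(0)^2 = p^2\left(\omega - \frac{p^2}{2}\right).
\end{equation*}
A nontrivial solution forces $p \in (0, \sqrt{2\omega}]$: the profile has maximal height $\sqrt{2\omega}$, while $p = 0$ would, by uniqueness for \eqref{eqNLS1D}, propagate to $u \equiv 0$.

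Next I would exploit the evenness of $u_c$ together with the $\delta$-vertex condition. Evenness gives $u_c'(L) = -u_c'(-L)$, so, setting $q := u_c'(-L)$, the derivative-jump condition at $\operatorname{v}$ (in the orientation consistent with the quadratic form $Q_{\operatorname{H}_\gamma}$) collapses to the single scalar identity
\begin{equation*}
    u_h'(0) = \gamma p - 2q.
\end{equation*}
Substituting this into $u_h'(0)^2 = p^2(\omega - p^2/2)$ and solving the resulting quadratic for $q$ gives
\begin{equation*}
    q = \frac{p}{2}\left(\gamma \mp \sqrt{\omega - \frac{p^2}{2}}\right) = q_{\omega,\gamma,\pm}(p),
\end{equation*}
which is exactly the assertion that $(u_c(-L), u_c'(-L)) \in \Gamma_{\omega,\gamma}$, establishing \eqref{eqShapeEvenCurve}.

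It then remains to identify, on each branch of $\Gamma_{\omega,\gamma}$, the sign of $u_h'(0)$, since by Remark \ref{rkSech} this sign (together with the degenerate case $u_h'(0) = 0$) distinguishes the three shapes. Re-substituting $q = q_{\omega,\gamma,\pm}(p)$ into $u_h'(0) = \gamma p - 2q$ yields $u_h'(0) = \mp\, p\sqrt{\omega - p^2/2}$: on the branch $q_{\omega,\gamma,+}$, i.e. on $\Gamma_{\omega,\gamma}^1$, one has $u_h'(0) \le 0$ with equality only at $p = \sqrt{2\omega}$, whereas on the branch $q_{\omega,\gamma,-}$, i.e. on $\Gamma_{\omega,\gamma}^2 \cup \Gamma_{\omega,\gamma}^3$ by Lemma \ref{lemCurvePos} and \eqref{eqGammaCurve2}, one has $u_h'(0) \ge 0$, again with equality only at $p = \sqrt{2\omega}$. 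Matching these signs with Remark \ref{rkSech} gives the three cases: $u_h'(0) < 0$ (tail-type) on $\Gamma_{\omega,\gamma}^1$ away from its endpoint, $u_h'(0) > 0$ (bump-type) on $\Gamma_{\omega,\gamma}^2 \cup \Gamma_{\omega,\gamma}^3$ away from the common endpoint, and $u_h'(0) = 0$ with $p = \sqrt{2\omega}$ (half soliton-type) exactly at the junction point where the two branches meet.

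The two algebraic substitutions are routine; the step requiring care is the derivation of the scalar identity $u_h'(0) = \gamma p - 2q$. One must combine the continuity condition, the evenness-induced relation $u_c'(L) = -u_c'(-L)$, and the jump condition with the correct orientation of the derivatives at the vertex, since a sign slip there would place the point on the reflection of $\Gamma_{\omega,\gamma}$ across the $p$-axis and interchange the tail/bump dichotomy. Once this identity is secured with the orientation consistent with $Q_{\operatorname{H}_\gamma}$, everything else follows by direct computation and Remark \ref{rkSech}.
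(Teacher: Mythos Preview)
Your proof is correct and follows essentially the same approach as the paper: both use Remark \ref{rkSech} to identify $u_h$ as a sech profile with $\mathcal E_\omega(u_h(0),u_h'(0))=0$, combine evenness of $u_c$ with the $\delta$-vertex condition to obtain $u_h'(0)=\gamma p-2q$ (the paper's \eqref{eqDeltaCond2}), and then solve for $q$ to land on $\Gamma_{\omega,\gamma}$, reading off the tail/half-soliton/bump trichotomy from the sign of $u_h'(0)$. The only cosmetic difference is that the paper phrases the case split via the translation parameter $b$ of the sech, whereas you work directly with the sign of $u_h'(0)$; these are equivalent by Remark \ref{rkSech}.
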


\begin{proof}
    By Remark \ref{rkSech}, $u_h$ is a hyperbolic secant of type \eqref{eqSolSech} with a translation parameter $b \in \mathbb R$. In particular, we have
    \begin{equation} \label{eqDeltaCond1}
        u_h'(0) =
        \begin{cases}
            -\sqrt{\omega} u_c(-L) \sqrt{1 - \frac{u_c(-L)^2}{2\omega} } & \text{ if and only if } b > 0, \\
            0 & \text{ if and only if } b = 0, \\
            \sqrt{\omega} u_c(-L) \sqrt{1 - \frac{u_c(-L)^2}{2\omega} } & \text{ if and only if } b < 0.
        \end{cases}
    \end{equation}
    by continuity at the vertex. Furthermore, the $\delta$-vertex condition and the fact that $u_c$ is even imposes that
    \begin{equation} \label{eqDeltaCond2}
        2 u_c'(-L) + u_h'(0) = \gamma u_c(-L).
    \end{equation}
    We deduce that
    \begin{equation*}
        u_c'(-L) =
        \begin{cases}
            q_{\omega, \gamma, +} (u_c(-L)) & \text{ if and only if } b > 0, \\
            \gamma \omega / \sqrt{2} & \text{ if and only if } b = 0, \\
            q_{\omega, \gamma, -} (u_c(-L)) & \text{ if and only if } b < 0.
        \end{cases}
    \end{equation*}
    by \eqref{eqDeltaCond1} and \eqref{eqDeltaCond2} and this concludes the proof.
\end{proof}

We now study the existence of solutions of \eqref{eqNLS} such that $u_c$ is even on the compact edge $c$. The following lemma gives the relation between the length $L$ of the compact edge $c$ and the period functions $\mathcal T_+$ and $\mathcal T_-$ defined in \eqref{eqPeriodFunctions}.

\begin{lemma} \label{lemLengthCircleEven}
    Let $\omega > 0$, $0 < \gamma < \sqrt{\omega}$ and $L > 0$ be fixed. Assume that $u = (u_c, u_h)$ is a solution of \eqref{eqNLS} such that $u_c$ is even on $c$. Then, the following property holds:
    \begin{enumerate}[label=(\roman*)]
        \item if $(u_c(-L), u_c'(-L)) \in (\Gamma_{\omega, \gamma}^1 \cup \Gamma_{\omega, \gamma}^2) \setminus {(\sqrt{2(\omega - \gamma^2)}, 0)}$, then there exists $n \in \mathbb N$ such that
        \[
            L = (n+1) \mathcal T_+ \left( u_c(-L), u_c'(-L) \right) + n \mathcal T_- \left( u_c(-L), u_c'(-L) \right);
        \]
        \item if $(u_c(-L), u_c'(-L)) \in \Gamma_{\omega, \gamma}^3 \setminus {(\sqrt{2(\omega - \gamma^2)}, 0)}$, then there exists $n \in \mathbb N$ such that
        \[
            L = n \mathcal T_+ \left( u_c(-L), u_c'(-L) \right) + (n+1) \mathcal T_- \left( u_c(-L), u_c'(-L) \right);
        \]
        \item if $(u_c(-L), u_c'(-L)) = (\sqrt{2(\omega - \gamma^2)}, 0)$, , then there exists $n \in \mathbb N$ such that
        \[
            L = n \mathcal T_+ \left(\sqrt{2(\omega - \gamma^2)}, 0 \right) + n\mathcal T_- \left( \sqrt{2(\omega - \gamma^2)}, 0\right).
        \]
    \end{enumerate}
\end{lemma}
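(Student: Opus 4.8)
The plan is to translate the statement about solutions $u_c$ that are even on $c$ into a geometric statement about how the orbit $\{(u_c(x), u_c'(x)) : x \in [-L,L]\}$ traverses the $\mathcal E_\omega$-curve of level $E = \mathcal E_\omega(u_c(-L), u_c'(-L))$. By Lemma \ref{lemShapeLine} we already know that $(u_c(-L), u_c'(-L)) \in \Gamma_{\omega,\gamma}$, so the starting point on the phase portrait is fixed. The key observation is that evenness of $u_c$ forces $u_c'(0) = 0$, meaning the orbit must pass through a turning point of the phase curve at $x = 0$; the period functions $\mathcal T_+$ and $\mathcal T_-$ of \eqref{eqPeriodFunctions} measure exactly the $x$-distance needed to reach the right turning point $(p_+, 0)$ and the left turning point $(p_-, 0)$ respectively. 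The length constraint will then arise by counting how many full and half periods are traversed as $x$ runs from $-L$ to $0$.

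\textbf{First} I would record the evenness consequence: since $u_c(-x) = u_c(x)$, we have $u_c'(0) = 0$, so $(u_c(0), u_c'(0))$ is a turning point $(p_{\pm}, 0)$ on the curve $\mathcal E_{\omega, E}$. The problem then reduces to measuring the arc-length-in-$x$ from the initial point $(u_c(-L), u_c'(-L))$ to this turning point, and I would reconstruct $L$ as a sum of complete traversals of half-periods. By \eqref{eqPeriodSolution}, one full period of the oscillation between the two turning points has $x$-length $2(\mathcal T_+ + \mathcal T_-)$; a passage from one turning point to the other has length $\mathcal T_+ + \mathcal T_-$, and a partial passage from the starting point $(p,q)$ to the nearer turning point contributes a single $\mathcal T_+(p,q)$ or $\mathcal T_-(p,q)$ depending on the sign of $q$ and which branch we are on.

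\textbf{Next} I would split into the three cases according to which subset $\Gamma_{\omega,\gamma}^j$ contains the starting point, using Lemma \ref{lemCurvePos} and the sign of $u_c'(-L)$ to decide the initial direction of the flow and hence whether the solution first moves toward $p_+$ or $p_-$. For case $(i)$, when $(u_c(-L), u_c'(-L)) \in \Gamma_{\omega,\gamma}^1 \cup \Gamma_{\omega,\gamma}^2$, the starting velocity component $q$ is nonnegative, so the orbit moves first toward the right turning point $(p_+, 0)$, covering a distance $\mathcal T_+(u_c(-L),u_c'(-L))$ to reach it, after which it oscillates; the total length then reads $(n+1)\mathcal T_+ + n \mathcal T_-$, the extra $\mathcal T_+$ accounting for this initial incomplete arc. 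For case $(ii)$, when $(u_c(-L),u_c'(-L)) \in \Gamma_{\omega,\gamma}^3$, the starting velocity is negative and the orbit moves first toward the left turning point $(p_-,0)$, giving the symmetric count $n\mathcal T_+ + (n+1)\mathcal T_-$. For case $(iii)$, the starting point is itself a turning point $(\sqrt{2(\omega-\gamma^2)}, 0)$, so both $\mathcal T_+$ and $\mathcal T_-$ enter symmetrically and the length is $n(\mathcal T_+ + \mathcal T_-)$; here I would use Lemma \ref{lemPeriodFunc} to note that neither period degenerates since this point lies strictly inside the dnoidal/cnoidal regime when $0<\gamma<\sqrt\omega$.

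\textbf{The main obstacle} will be the careful bookkeeping of the initial partial arc: one must verify that at $x=-L$ the flow genuinely points toward the claimed turning point and that the endpoint $x=0$ coincides with a turning point of the correct type, so that the half-period count closes up to give an integer $n$ rather than a half-integer. This requires using the direction of the flow on the phase portrait (the arrows in Figure \ref{figECurve}), together with the sign information from \eqref{eqSignQ} that distinguishes $\Gamma_{\omega,\gamma}^2$ (where $q_{\omega,\gamma,-}>0$) from $\Gamma_{\omega,\gamma}^3$ (where $q_{\omega,\gamma,-}<0$); this is precisely why $\Gamma_{\omega,\gamma}^2$ is grouped with $\Gamma_{\omega,\gamma}^1$ in case $(i)$ rather than with $\Gamma_{\omega,\gamma}^3$. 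Once the direction and the turning-point matching are pinned down, the formula for $L$ follows by adding $\mathcal T_+ + \mathcal T_-$ for each complete turning-point-to-turning-point passage and the single leftover $\mathcal T_\pm$ for the initial arc.
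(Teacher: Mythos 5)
Your proposal is correct and takes essentially the same route as the paper: both arguments read off the orbit of $(u_c,u_c')$ on the level curve of $\mathcal E_\omega$ and count its traversals of the arcs measured by the period functions $\mathcal T_+$ and $\mathcal T_-$, with the sign of $u_c'(-L)$ on $\Gamma_{\omega,\gamma}^1\cup\Gamma_{\omega,\gamma}^2$ versus $\Gamma_{\omega,\gamma}^3$ deciding which turning point is reached first. The only difference is bookkeeping: you exploit evenness through the midpoint condition $u_c'(0)=0$ and count over $[-L,0]$, whereas the paper counts over all of $[-L,L]$ using the reflected endpoint $(u_c(L),u_c'(L))=(u_c(-L),-u_c'(-L))$ --- an equivalent count (and you prove case (iii) in place, where the paper defers it to the proof of Proposition \ref{propShapeEven}).
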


\begin{proof}
    Let $T$ be the period of $u_c$ and $n = \lfloor 2L / T \rfloor \in \mathbb N$ be the number of periods that $u_c$ crosses on $[-L,L]$ and $E = \mathcal E_\omega(u_c(-L), u_c'(-L))$. Then the solution $u_c$ moves along the $\mathcal E_\omega$-curve of level $E$ in the direction of the flow represented in Figure \ref{figECurve}, starting from the point $(u_c(-L), u_c'(-L))$, and, as $u_c$ is even, stops at the point $(u_c(L), u_c'(L)) = (u_c(-L), -u_c'(-L))$.
    
    If $(u_c(-L), u_c'(-L)) \in \Gamma_{\omega, \gamma}^1 \cup \Gamma_{\omega, \gamma}^2 \setminus {(\sqrt{2(\omega - \gamma^2)}, 0)}$, then in particular $u_c'(-L) \geq 0$, so $u_c'(L) = -u_c'(-L) \leq 0$ and the point $(u_c(x),u_c'(x))$ travels along $\mathcal E_{\omega, E}^{u_c(-L),2}$ one more time than along $\mathcal E_{\omega, E}^{u_c(-L), 1} \cup \mathcal E_{\omega, E}^{u_c(-L), 3}$ as $x$ goes from $-L$ to $L$. Thus, there exists $n \in \mathbb N$ such that
    \[
        L = (n+1) \mathcal T_+ \left( u_c(-L), u_c'(-L) \right) + n \mathcal T_- \left( u_c(-L), u_c'(-L) \right)
    \]
    by definition of $\mathcal T_+$ and $\mathcal T_-$.
    
    If $(u_c(-L), u_c'(-L)) \in \Gamma_{\omega, \gamma}^3 \setminus {(\sqrt{2(\omega - \gamma^2)}, 0)}$, then in particular $u_c'(-L) \leq 0$ so that $u_c'(L) = -u_c'(-L) \geq 0$ and the point $(u_c(x),u_c'(x))$ travels along $\mathcal E_{\omega, E}^{u_c(-L), 1} \cup \mathcal E_{\omega, E}^{u_c(-L), 3}$ one more time than along $\mathcal E_{\omega, E}^{u_c(-L),2}$ as $x$ goes from $-L$ to $L$. Thus, there exists $n \in \mathbb N$ such that
    \[
        L = n \mathcal T_+ \left( u_c(-L), u_c'(-L) \right) + (n+1) \mathcal T_- \left( u_c(-L), u_c'(-L) \right)
    \]
    by definition of $\mathcal T_+$ and $\mathcal T_-$. Point (iii) will be proved in Proposition \ref{propShapeEven}. This concludes the proof.
\end{proof}

We then show that there exists a finite number of positive solutions of \eqref{eqNLS} which are even on the compact edge $c$. In the proof, we first build such a solution by studying the $\mathcal E_\omega$ and $\Gamma_\omega$ curve and we deduce that there is a finite number of them by studying the period functions $\mathcal T_+$ and $\mathcal T_-$.

\begin{lemma} \label{lemShapeFinite}
    Let $\omega > 0$, $0 < \gamma < \sqrt{\omega}$ and $L > 0$ be fixed. Then there exists a non-negative solution $u = (u_c, u_h)$ of \eqref{eqNLS} such that $u_c$ is even on $c$. If $\gamma \neq \sqrt{\omega / 2}$, there exists a finite number of those solutions.
\end{lemma}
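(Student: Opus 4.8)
The plan is to exploit the correspondence, set up in Lemmas \ref{lemShapeLine} and \ref{lemLengthCircleEven}, between non-negative solutions $u=(u_c,u_h)$ of \eqref{eqNLS} with $u_c$ even and pairs consisting of a point $(p,q)\in\Gamma_{\omega,\gamma}$ together with an integer $n\in\mathbb N$ realizing one of the length relations
\[
L=(n+1)\mathcal T_+(p,q)+n\mathcal T_-(p,q),\quad L=n\mathcal T_+(p,q)+(n+1)\mathcal T_-(p,q),\quad L=n\bigl(\mathcal T_+(p,q)+\mathcal T_-(p,q)\bigr),
\]
according to whether $(p,q)$ lies in $\Gamma_{\omega,\gamma}^1\cup\Gamma_{\omega,\gamma}^2$, in $\Gamma_{\omega,\gamma}^3$, or equals the special point $(\sqrt{2(\omega-\gamma^2)},0)$. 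Conversely, each such pair determines a unique even solution, obtained by solving \eqref{eqNLS1D} on $[-L,L]$ from the boundary datum $(p,q)$ and gluing the secant tail of Remark \ref{rkSech}; so it suffices to analyse these relations, reading their right-hand sides as continuous length functions along $\Gamma_{\omega,\gamma}$.

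For existence I would take $n=0$ and apply the intermediate value theorem, splitting according to the position of the special point relative to the centre $(\sqrt\omega,0)$, governed by $\sqrt{2(\omega-\gamma^2)}\gtrless\sqrt\omega\iff\gamma\lessgtr\sqrt{\omega/2}$. When $\gamma\le\sqrt{\omega/2}$, the arcs $\Gamma_{\omega,\gamma}^2$ and $\Gamma_{\omega,\gamma}^1$ join at $p=\sqrt{2\omega}$ into a connected curve, and $(p,q)\mapsto\mathcal T_+(p,q)$ is continuous along it; by \eqref{eqPeriod2} it vanishes at the special point (where $p\ge\sqrt\omega$, $q=0$) and by \eqref{eqPeriod3} it tends to $+\infty$ as $p\to0$ along $\Gamma_{\omega,\gamma}^1$, hence takes the value $L$ for every $L>0$. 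When $\gamma\ge\sqrt{\omega/2}$, I use instead $(p,q)\mapsto\mathcal T_-(p,q)$ along $\Gamma_{\omega,\gamma}^3$: a direct computation gives $\mathcal E_\omega(p,q_{\omega,\gamma,-}(p))=-\tfrac{p^2}{4}(s+\gamma)(3s-\gamma)<0$ with $s=\sqrt{\omega-p^2/2}>\gamma$, so the arc is dnoidal, $(p,q)\notin\mathcal E_{\omega,0}$, and $\mathcal T_-$ is well defined and continuous there; by \eqref{eqPeriod5} it vanishes at the special point (now $p\le\sqrt\omega$) and by \eqref{eqPeriod6} it blows up as $(p,q)\to(0,0)\in\mathcal E_{\omega,0}$, again covering every $L>0$. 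In either case the resulting $u_c$ is a single monotone half-arc whose $\mathcal E_\omega$-curve stays in $\{p>0\}$ (its minimum being $p$ or $p_-(p,q)>0$), hence $u_c>0$.

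The heart of the matter is finiteness. The key observation is that $\mathcal T_+$ vanishes only on $[\sqrt\omega,\sqrt{2\omega}]\times\{0\}$ and $\mathcal T_-$ only on $[0,\sqrt\omega]\times\{0\}$, so their common zero is the single point $(\sqrt\omega,0)$. Since the only crossing of $\Gamma_{\omega,\gamma}$ with $\{q=0\}$ at positive $p$ is the special point, and $\sqrt{2(\omega-\gamma^2)}=\sqrt\omega$ exactly when $\gamma=\sqrt{\omega/2}$, the centre lies on $\Gamma_{\omega,\gamma}$ if and only if $\gamma=\sqrt{\omega/2}$. Thus for $\gamma\neq\sqrt{\omega/2}$ one has $\max(\mathcal T_+,\mathcal T_-)>0$ at every point of $\Gamma_{\omega,\gamma}$; combined with the blow-up \eqref{eqPeriod3}, \eqref{eqPeriod6} near $p=0$ and continuity on the relatively compact remainder, this produces a uniform bound $c:=\inf_{\Gamma_{\omega,\gamma}}\max(\mathcal T_+,\mathcal T_-)>0$. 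Each length relation then forces $L\ge n\,(\mathcal T_++\mathcal T_-)\ge n\,c$, so only the finitely many integers $n\le L/c$ can occur.

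It then remains to bound, for each admissible $n$ and each branch, the number of points $(p,q)$ solving the relation. Parametrising the branch by $p$ and writing the relation as $\Phi_n(p)=L$, the function $\Phi_n$ is a finite combination of the period integrals \eqref{eqPeriodFunctions} composed with the real-analytic maps $p\mapsto q_{\omega,\gamma,\pm}(p)$; since $\mathcal T_\pm$ are real-analytic on the non-degenerate region and $\Phi_n\to+\infty$ at the endpoint $p\to0$, $\Phi_n$ is real-analytic and non-constant on each relevant open subinterval, and its solutions of $\Phi_n=L$ stay in a compact subinterval, so the level set is finite. Summing over the finitely many branches and admissible $n$ (and treating the special point, which yields at most one admissible $n$) gives the finiteness. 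I expect the two delicate points to be the uniform positivity $c>0$ — for which isolating the centre $(\sqrt\omega,0)$ and the excluded value $\gamma=\sqrt{\omega/2}$ is essential — and the non-constancy of $\Phi_n$, which could alternatively be deduced from monotonicity properties of the period functions in the spirit of \cite{AgCoTa24}.
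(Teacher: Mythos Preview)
Your argument is correct and follows the paper's overall scheme—intermediate value theorem along $\Gamma_{\omega,\gamma}$ for existence, and the fact that the centre $(\sqrt\omega,0)$ lies off $\Gamma_{\omega,\gamma}$ when $\gamma\neq\sqrt{\omega/2}$ for finiteness—but is more careful in two respects. For existence, the paper applies the IVT only to $\mathcal T_+$ on $\Gamma_{\omega,\gamma}^1\cup\Gamma_{\omega,\gamma}^2$, asserting that $\mathcal T_+=0$ at the endpoint $(\sqrt{2(\omega-\gamma^2)},0)$; your case split according to $\gamma\lessgtr\sqrt{\omega/2}$, switching to $\mathcal T_-$ on $\Gamma_{\omega,\gamma}^3$ in the second regime, is genuinely needed, since by \eqref{eqPeriod1}--\eqref{eqPeriod2} that vanishing holds only when $\sqrt{2(\omega-\gamma^2)}\ge\sqrt\omega$. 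For finiteness, the paper argues by contradiction: an infinite family would yield a sequence with $N(n)\to\infty$, forcing $\mathcal T_\pm(p_n,q_n)\to 0$ and hence $(p_n,q_n)\to(\sqrt\omega,0)\notin\Gamma_{\omega,\gamma}$; however the assertion ``$N(n)\to\infty$'' is left unjustified there, since it presupposes that each fixed $N$ contributes only finitely many points of $\Gamma_{\omega,\gamma}$. Your uniform lower bound $c=\inf_{\Gamma_{\omega,\gamma}}\max(\mathcal T_+,\mathcal T_-)>0$ recovers the bound on $n$ by the same mechanism, and your real-analyticity argument for the level sets $\{\Phi_n=L\}$ supplies exactly the missing finiteness at fixed $n$, so your version is in fact more complete than the paper's.
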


\begin{proof}
    We divide the proof into two steps. In the first one, we build a solution of \eqref{eqNLS} which is even on $c$. In the second one, we prove that there exists a finite number of them for $\gamma \neq \sqrt{\omega / 2}$.
    
    \textit{Step 1:} We study the period function $\mathcal T_+$ on the set $\Gamma_{\omega, \gamma}^1 \cup \Gamma_{\omega, \gamma}^2$. As $q_{\omega, \gamma, -}(p) \to 0$ as $p \to 0^+$, we have
    \[
        \mathcal T_+ \left( p, q_{\omega, \gamma, -}(p) \right) \to \infty
    \]
    as $p \to 0^+$ by Lemma \ref{lemPeriodFunc}. Furthermore,
    \[
        \mathcal T_+ \left( \sqrt{2(\omega - \gamma^2)}, q_{\omega, \gamma, -} \left( \sqrt{2 \left( \omega - \gamma^2 \right)} \right) \right) = 0.
    \]
    As $\mathcal T_+$ is continuous, there exists $(\tilde{p}, \tilde{q}) \in \Gamma_{\omega, \gamma}^1 \cup \Gamma_{\omega, \gamma}^2$ such that
    \[
        \mathcal T_+ \left( \tilde{p}, \tilde{q} \right) = L.
    \]
    Thus, considering the sets
    \[
        \mathcal E_{\omega, \mathcal E_\omega (\tilde{p}, \tilde{q})}^{\tilde{p}, 2} \cup \mathcal E_{\omega, 0}^{\tilde{p}, 1} \quad \text{if } (\tilde{p}, \tilde{q}) \in \Gamma_{\omega, \gamma}^1
    \]
    or
    \[
        \mathcal E_{\omega, \mathcal E_\omega (\tilde{p}, \tilde{q})}^{\tilde{p}, 2} \cup \mathcal E_{\omega, 0}^{\tilde{p}, 1} \cup \mathcal E_{\omega, 0}^{\tilde{p}, 2} \quad \text{if } (\tilde{p}, \tilde{q}) \in \Gamma_{\omega, \gamma}^2
    \]
    and the flow of the phase portrait, we build $u = (u_c, u_h) \in H^2(\mathcal{G}_L)$ such that $u_c$ satisfies
    \begin{align*}
        & u_c(-L) = u_c(L) = \tilde{p}, \\
        & u_c'(-L) = -u_c'(L) = \tilde{q}, \\
        & \mathcal E_{\omega} (u_c(x), u_c'(x)) = \mathcal E_{\omega} \left( \tilde{p}, \tilde{q} \right)
    \end{align*}
    for $x \in [-L, L]$ and $u_h$ satisfies
    \[
        u_h(x) = \sqrt{2 \omega} \operatorname{sech} \left( \sqrt{\omega} x + b \right),
    \]
    for $x \in [0, \infty)$, with $b \in \mathbb R$ which satisfies
    \[
        \sqrt{2 \omega} \operatorname{sech} \left( b \right) = \tilde{p}.
    \]
    Observe that $u_c$ is even on $c$. Furthermore, as $(\tilde{p}, \tilde{q}) \in \Gamma_{\omega, \gamma}$, such a $u$ satisfies the $\delta$-vertex conditions by Lemma \ref{lemShapeLine}. Thus, $u$ is a solution of \eqref{eqNLS}. This concludes the first step of the proof.
    
    \textit{Step 2:} We now show that there exists a finite number of solutions of \eqref{eqNLS} which are even on $c$. Assume by contradiction that there exists an infinite number of solutions of \eqref{eqNLS} which are even on $c$. Then, by Lemma \ref{lemLengthCircleEven}, there exist sequences $(p_n, q_n)_{n \in \mathbb N^*} \subset \Gamma_{\omega, \gamma}$ and $(N(n))_{n \in \mathbb N} \subset \mathbb N$ such that $N(n) \to \infty$ as $n \to \infty$ and, for $n \in \mathbb N$, either
    \[
        L = (N(n) + 1) \mathcal T_+ \left( p_n, q_n \right) + N(n) \mathcal T_- \left( p_n, q_n \right)
    \]
    or
    \[
        L = N(n) \mathcal T_+ \left( p_n, q_n \right) + (N(n) + 1) \mathcal T_- \left( p_n, q_n \right).
    \]
    Thus,
    \[
        \mathcal T_+ \left( p_n, q_n \right) \to 0 \text{ and } \mathcal T_- \left( p_n, q_n \right) \to 0
    \]
    as $n \to \infty$. By Lemma \ref{lemPeriodFunc} and continuity of $\mathcal T_+$ and $\mathcal T_-$, we thus have
    \[
        (p_n, q_n) \to \left( \sqrt{\omega}, 0 \right)
    \]
    as $n \to \infty$. However, as $\gamma \neq \sqrt{\omega / 2}$, the point $(\sqrt{\omega}, 0)$ does not belong to $\Gamma_{\omega, \gamma}$, so that
    \[
        \inf \left\{ \left| \left( \sqrt{\omega}, 0 \right) - (p, q) \right|: (p, q) \in \Gamma_{\omega, \gamma} \right\} > 0.
    \]
    As $(p_n, q_n)_{n \in \mathbb N} \subset \Gamma_{\omega, \gamma}$, this is a contradiction and concludes the second step of the proof.
\end{proof}

\begin{figure}[ht]
    \centering
    \begin{subfigure}{0.4\textwidth}
        \centering
        \includegraphics[width=\linewidth]{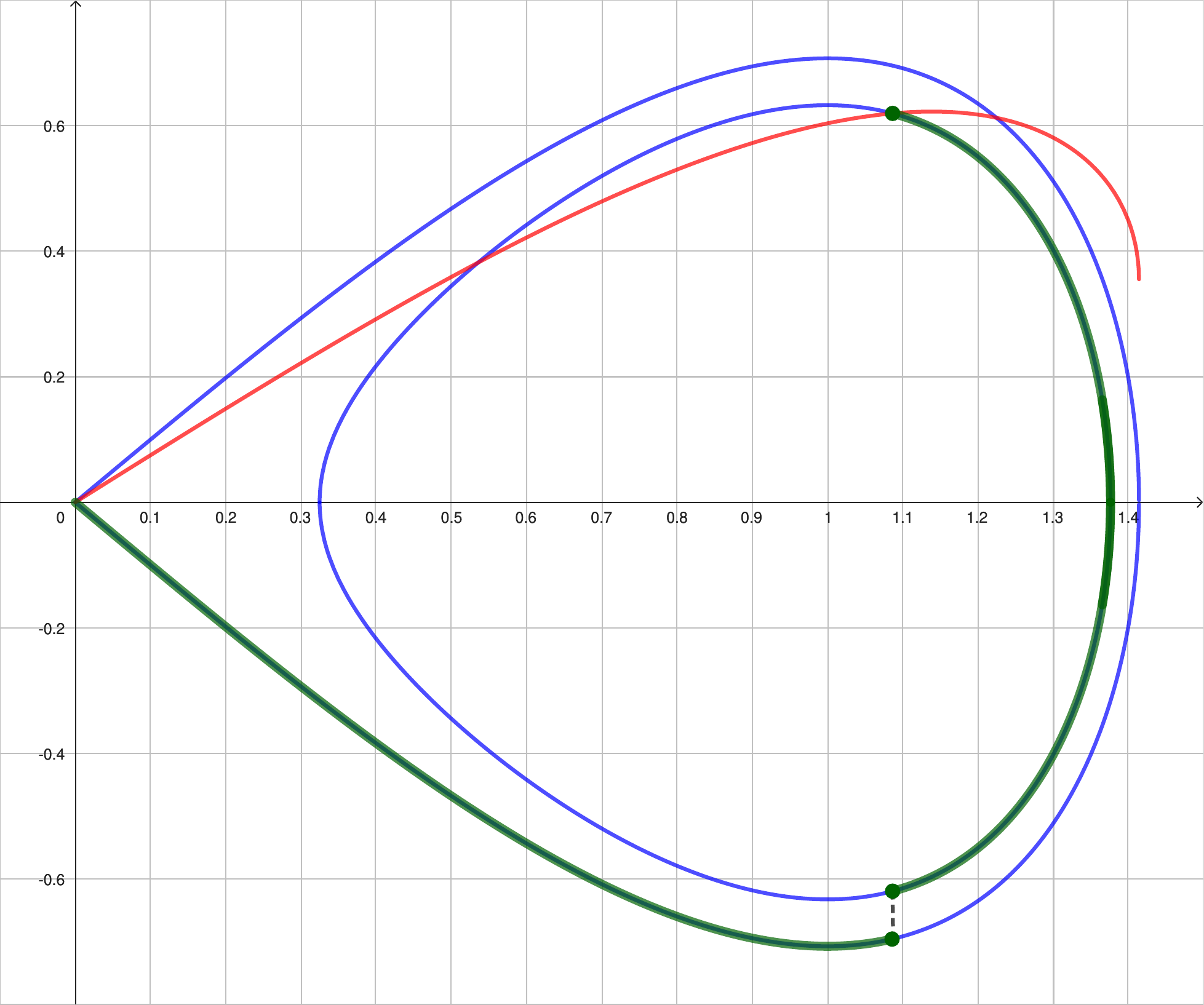}
        \caption{Case $(\tilde{p}, \tilde{q}) \in \Gamma_{\omega, \gamma}^1$.}
        \label{figSolution1}
    \end{subfigure}
    \hfill
    \begin{subfigure}{0.4\textwidth}
        \centering
        \includegraphics[width=\linewidth]{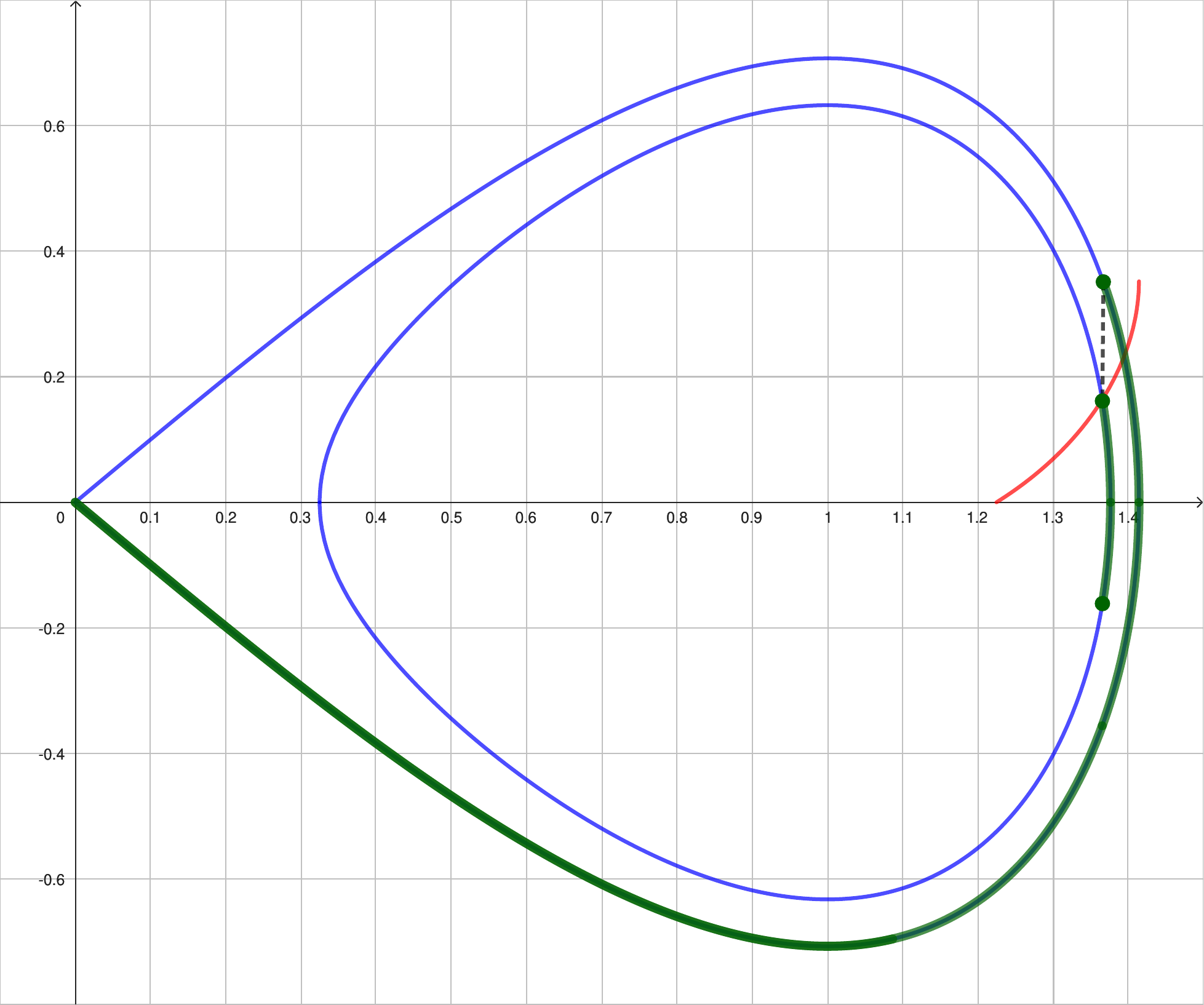}
        \caption{Case $(\tilde{p}, \tilde{q}) \in \Gamma_{\omega, \gamma}^2$.}
        \label{figSolution2}
    \end{subfigure}
    \caption{$\mathcal E_\omega$-curve of level $0$ and $\mathcal E_\omega (\tilde{p}, \tilde{q})$ (blue); $\Gamma_{\omega, \gamma}^1$ (red, left) and $\Gamma_{\omega, \gamma}^2$ (red, right); and the solution built in Step $1$ of the proof of Lemma \ref{lemShapeFinite} (green).}
\end{figure}

Figure \ref{figSolution1} and \ref{figSolution2} represent the construction of the solution in Step $1$ of the proof of Lemma \ref{lemShapeFinite}.

We can henceforth classify the shape of the positive solutions $u = (u_c, u_h)$ of \eqref{eqNLS} such that $u_c$ is even under the conditions of the Proposition \ref{propExistence}. The terms dnoidal-type solution and cnoidal-type solution have been defined in \eqref{eqSolDn} and \eqref{eqSolDn}.

\begin{proposition} \label{propShapeEven}
    Let $\omega > 0$, $0 < \gamma < \sqrt{\omega}$ and $L > 0$ be fixed. Let $u = (u_c, u_h)$ be a non-negative solution of \eqref{eqNLS} such that $u_c$ is even on $c$. The following properties hold:
    \begin{enumerate}[label=(\roman*)]
        \item if $(u_c(-L), u_c'(-L)) \in \Gamma_{\omega, \gamma}^1$, then $u_h$ is a tail-type solution and:
        \begin{enumerate}
            \item if $u_c(-L) < \sqrt{2(\omega - \gamma^2)}$, then $u_c$ is a dnoidal-type solution,
            \item if $u_c(-L) = \sqrt{2(\omega - \gamma^2)}$, then $u_c$ is a hyperbolic secant,
            \item if $\sqrt{2(\omega - \gamma^2)} < u_c(-L) < \sqrt{2\omega}$, then $u_c$ is a cnoidal-type solution;
        \end{enumerate}
        \item if $(u_c(-L), u_c'(-L)) = \left( \sqrt{2\omega}, q_{\omega, \gamma, +} \left( \sqrt{2\omega} \right) \right)$, then $u_h$ is a half soliton-type solution and $u_c$ is a cnoidal-type solution;
        \item if $(u_c(-L), u_c'(-L)) \in \Gamma_{\omega, \gamma}^2$, then $u_h$ is a bump-type solution and:
        \begin{enumerate}
            \item if $u_c(-L) > \sqrt{2(\omega - \gamma^2/9)}$, then $u_c$ is a cnoidal-type solution,
            \item if $u_c(-L) = \sqrt{2(\omega - \gamma^2/9)}$, then $u_c$ is a hyperbolic secant,
            \item if $\sqrt{2(\omega - \gamma^2)} < u_c(-L) < \sqrt{2(\omega - \gamma^2/9)}$, then $u_c$ is a dnoidal-type solution;
        \end{enumerate}
        \item if $(u_c(-L), u_c'(-L)) = ( \sqrt{2(\omega - \gamma^2)}, 0 )$, then $u_h$ is a bump-type solution and $u_c$ is a dnoidal-type solution; in particular, if $\gamma = \sqrt{\omega / 2}$, then $u_c \equiv \sqrt{\omega}$;
        \item if $(u_c(-L), u_c'(-L)) \in \Gamma_{\omega, \gamma}^3$, then $u_h$ is a bump-type solution and $u_c$ is a dnoidal-type solution.
    \end{enumerate}
    Furthermore, in the cases $(i)$-$(a)$, $(iii)$-$(c)$ and $(v)$, the translation parameter $a$ of $u_c$ is either $a = 0$ or $a = \sqrt{(1 - 2k^2)/\omega}K(k)$;
    in the cases $(i)$-$(b)$ and $(iii)$-$(b)$, the translation parameter of $u_c$ is $a = 0$;
    in the cases $(i)$-$(c)$, $(ii)$ and $(iii)$-$(a)$, the translation parameter of $u_c$ is $a = 0$ and its parameter $k$ satisfies $\sqrt{(1-2k^2)/\omega} K(k) > L$;
    and in the case $(iv)$, the translation parameter $b$ of $u_h$ is given by $b = -\operatorname{arctanh} ( \gamma/\sqrt{\omega} )$ and the translation parameter $a$ of $u_c$ is either $a = 0$ or $a = \sqrt{(1 - 2k^2)/\omega}K(k)$ and satisfies $L = n\sqrt{(2 - k^2) / \omega}K(k)$ for some $n \in \mathbb N$.
\end{proposition}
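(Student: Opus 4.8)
The plan is to decouple the two pieces of shape information—the type of the half-line component $u_h$ and the type of the compact-edge component $u_c$—since they are governed by independent mechanisms, and then to pin down the translation parameters using evenness and non-negativity.

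First I would read off the type of $u_h$ directly from Lemma~\ref{lemShapeLine} together with the partition \eqref{eqGammaCurve2}: the half-line profile is tail-type on $\Gamma_{\omega,\gamma}^1$ (away from the distinguished point), half soliton-type at $\left(\sqrt{2\omega},q_{\omega,\gamma,+}(\sqrt{2\omega})\right)$, and bump-type on $\Gamma_{\omega,\gamma}^2\cup\Gamma_{\omega,\gamma}^3$. This assigns the $u_h$-label in each of (i)--(v) with no further work.

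Second, and this is the heart of the argument, I determine the type of $u_c$ by computing the integral of motion $E:=\mathcal{E}_\omega(u_c(-L),u_c'(-L))$ and invoking the trichotomy \eqref{eqSolDn}--\eqref{eqSolCn} (dnoidal if $E<0$, hyperbolic secant if $E=0$, cnoidal if $E>0$). Writing $p=u_c(-L)$ and $s=\sqrt{\omega-p^2/2}$, the explicit forms of $q_{\omega,\gamma,\pm}$ give the factorizations
\[
    \mathcal{E}_\omega\!\left(p,q_{\omega,\gamma,+}(p)\right)=\frac{p^2}{4}(\gamma-s)(\gamma+3s),\qquad
    \mathcal{E}_\omega\!\left(p,q_{\omega,\gamma,-}(p)\right)=\frac{p^2}{4}(\gamma-3s)(\gamma+s).
\]
Since $\gamma>0$ and $s\ge 0$, the sign of $E$ is that of $\gamma-s$ on $\Gamma_{\omega,\gamma}^1$ and that of $\gamma-3s$ on $\Gamma_{\omega,\gamma}^2\cup\Gamma_{\omega,\gamma}^3$; these change sign exactly at $p=\sqrt{2(\omega-\gamma^2)}$ (where $s=\gamma$) and at $p=\sqrt{2(\omega-\gamma^2/9)}$ (where $s=\gamma/3$). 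Comparing $E$ with $0$ across these thresholds reproduces the dnoidal/secant/cnoidal classification in every subcase of (i)--(v). The equilibrium subcase of (iv) is then immediate: when $\gamma=\sqrt{\omega/2}$ one has $(\sqrt{2(\omega-\gamma^2)},0)=(\sqrt{\omega},0)$, a fixed point of the flow, so $u_c\equiv\sqrt{\omega}$.

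Third, I would fix the translation parameters from the evenness of $u_c$ (so that by Remark~\ref{rkPeriodL} its maximum sits at the center and $u_c'(0)=0$). In the dnoidal cases (i)-(a), (iii)-(c) and (v), the evenness characterization recalled after \eqref{eqSolDn} leaves exactly the two admissible values $a=0$ and $a=\sqrt{(1-2k^2)/\omega}K(k)$, both of which keep $\dn$—hence $u_c$—positive. In the secant cases (i)-(b) and (iii)-(b), evenness centers the profile and forces the parameter $0$. For case (iv) the value $b=-\operatorname{arctanh}(\gamma/\sqrt{\omega})$ is obtained by solving $\sqrt{2\omega}\,\sech(b)=u_c(-L)=\sqrt{2(\omega-\gamma^2)}$ with $b<0$, and since there $u_c'(-L)=0$ makes both endpoints critical, $L$ must be an integer multiple of the dnoidal half-period $\sqrt{(2-k^2)/\omega}K(k)$.

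The main obstacle is the cnoidal cases (i)-(c), (ii) and (iii)-(a), where the non-negativity of $u_c$ becomes essential rather than cosmetic. A full cnoidal wave is sign-changing, so only a single positive hump is admissible: by Lemma~\ref{lemLengthCircleEven} a general even solution realizes $L=(n+1)\mathcal{T}_++n\mathcal{T}_-$, and any passage along a $\mathcal{T}_-$-arc would drive the orbit down to $p_-<0$ (recall $p_-<0$ when $\mathcal{E}_\omega>0$). Non-negativity therefore forces $n=0$, centers the peak at $x=0$ so that $a=0$ is the only surviving even normalization, and confines $[-L,L]$ within the positive hump, i.e. $\sqrt{(1-2k^2)/\omega}K(k)>L$. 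Carefully tracking the direction of the flow (Remark~\ref{rkSech}) and the range of $x$ over which the orbit stays in $\{p\ge 0\}$ is where the argument demands the most care.
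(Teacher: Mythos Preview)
Your proof is correct and follows essentially the same route as the paper: determine the type of $u_h$ from Lemma~\ref{lemShapeLine}, then determine the type of $u_c$ from the sign of $\mathcal{E}_\omega$ at the vertex, and finally fix the translation parameters from evenness and non-negativity. The only notable difference is cosmetic: where the paper introduces auxiliary functions $F_\pm$ and $G$ comparing $q_{\omega,\gamma,\pm}(p)$ to $\pm p\sqrt{\omega-p^2/2}$ and reads off their sign, you compute the factorizations
\[
\mathcal{E}_\omega\bigl(p,q_{\omega,\gamma,+}(p)\bigr)=\tfrac{p^2}{4}(\gamma-s)(\gamma+3s),\qquad
\mathcal{E}_\omega\bigl(p,q_{\omega,\gamma,-}(p)\bigr)=\tfrac{p^2}{4}(\gamma-3s)(\gamma+s),
\]
which give the sign directly and make the thresholds $s=\gamma$ and $s=\gamma/3$ (i.e.\ $p=\sqrt{2(\omega-\gamma^2)}$ and $p=\sqrt{2(\omega-\gamma^2/9)}$) appear without further work. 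Your treatment of the cnoidal cases is also slightly more explicit than the paper's: you spell out that non-negativity forces $n=0$ in Lemma~\ref{lemLengthCircleEven} because any $\mathcal{T}_-$-arc reaches $p_-<0$, whereas the paper simply asserts the constraints on $a$ and $k$ ``in order for $u_c$ to be non-negative and even.''
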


\begin{proof}
    By Lemma \ref{lemShapeFinite}, such a solution $u$ does exist. We will show that its shape is determined by the position of the point $(u_c(-L), u_c'(-L))$ on the phase portrait. The proof is based on the following study of the position of the $\Gamma_{\omega}$-curve of level $\gamma$ relatively to the $\mathcal{E}_{\omega}$-level curve of level $0$.
    
    First, observe that if $\mathcal E_\omega(p,q) = 0$, then
    \[
        q^2 = p^2 \left( \omega - \frac{p^2}{2} \right).
    \]
    Thus, we consider the functions $F_+$, $F_-$ and $G$ given by
    \[
        F_+(p) = \sqrt{p^2 \left( \omega - \frac{p^2}{2} \right)} - q_{\omega, \gamma, +} (p), \quad F_-(p) = \sqrt{p^2 \left( \omega - \frac{p^2}{2} \right)} - q_{\omega, \gamma, -} (p)
    \]
    and
    \[
        G(p) = -\sqrt{p^2 \left( \omega - \frac{p^2}{2} \right)} - q_{\omega, \gamma, -} (p)
    \]
    for $p \in (0, \sqrt{2\omega}]$. As $\gamma < \sqrt{\omega}$, the study of those functions shows that
    \begin{align}
        & F_+(p) > 0 \text{ for } 0 < p < \sqrt{2(\omega - \gamma^2)}, & F_-(p) > 0 & \text{ for } 0 < p < \sqrt{2(\omega - \gamma^2/9)}, \label{eqShapeCircle1} \\
        & F_+(p) = 0 \text{ for } p = \sqrt{2(\omega - \gamma^2)}, & F_-(p) = 0 & \text{ for } p = \sqrt{2(\omega - \gamma^2/9)}, \label{eqShapeCircle2} \\
        & F_+(p) < 0 \text{ for } \sqrt{2(\omega - \gamma^2)} < p \leq \sqrt{2\omega}, & F_-(p) < 0 & \text{ for } \sqrt{2(\omega - \gamma^2/9)} < p \leq \sqrt{2\omega}, \label{eqShapeCircle3} \\
        & G(p) < 0 \text{ for } p \in \left( 0, \sqrt{2\omega} \right). \label{eqShapeCircle4}
    \end{align}
    We deduce that if $(u_c(-L), u_c'(-L)) = (p, q_{\omega, \gamma, +}(p))$ or $(p, q_{\omega, \gamma, -}(p))$ and $E = \mathcal E_{\omega}(u_c, u_c')$, then $u_c$ is a dnoidal-type solution in the cases \eqref{eqShapeCircle1} and \eqref{eqShapeCircle4} as $\mathcal E_{\omega}(u_c, u_c') < 0$; a hyperbolic secant in the case \eqref{eqShapeCircle2} as $\mathcal E_{\omega}(u_c, u_c') = 0$; and a cnoidal solution in the case \eqref{eqShapeCircle3} as $\mathcal E_{\omega}(u_c, u_c') > 0$.
    
    The point $(i)$-$(a)$ comes from \eqref{eqShapeCircle1} and Lemma \ref{lemShapeLine}-$(i)$; and the point $(iii)$-$(c)$ comes from \eqref{eqShapeCircle1} and Lemma \ref{lemShapeLine}-$(iii)$. The translation parameter $a$ of $u_c$ is is either $a = 0$ or $a = \sqrt{(1 - 2k^2)/\omega}K(k)$ in order for $u_c$ to be even.
    
    The point $(i)$-$(b)$ comes from \eqref{eqShapeCircle2} and Lemma \ref{lemShapeLine}-$(i)$; and the point $(iii)$-$(b)$ comes from \eqref{eqShapeCircle2} and Lemma \ref{lemShapeLine}-$(iii)$.

    The point $(i)$-$(c)$ comes from \eqref{eqShapeCircle3} and Lemma \ref{lemShapeLine}-$(i)$; the point $(ii)$ comes from \eqref{eqShapeCircle3} and Lemma \ref{lemShapeLine}-$(ii)$; and the point $(iii)$-$(a)$ comes from \eqref{eqShapeCircle3} and Lemma \ref{lemShapeLine}-$(iii)$. The translation parameter $a$ of $u_c$ is $a = 0$ and the parameter $k$ satisfies $\sqrt{(1-2k^2)/\omega} K(k) > L$ in order for $u_c$ to be non-negative and even.
    
    We prove point $(iv)$ in the following way. Assume that $(u_c(-L), u_c'(-L)) \in \Gamma_{\omega, \gamma}^2$ and $u_c(-L) = \sqrt{2(\omega - \gamma^2)}$. By continuity at the vertex, we have
    \[
        \left( u_c(-L), u_c'(-L) \right) = \left( \sqrt{2(\omega - \gamma^2)}, 0 \right) = \left( u_c(L), u_c'(L) \right).
    \]
    By definition of $\mathcal T_+$ and $\mathcal T_-$, we have that there exists $n \in \mathbb N$ such that
    \[
        L = n \left( \mathcal T_+ \left( u_c(-L), u_c'(-L) \right) + \mathcal T_- \left( u_c(-L), u_c'(-L) \right) \right).
    \]
    so the period $T$ of $u_c$ satisfies $L = nT/2$ by \eqref{eqPeriodSolution}. Thus, the parameter $k$ of $u_c$ satisfies $L = n\sqrt{(2 - k^2) / \omega}K(k)$. The parameter $a$ satisfies $a = 0$ or $a = \sqrt{(1 - 2k^2)/\omega}K(k)$ in order for $u_c$ to be even. We now consider $b \in \mathbb R$ such that
    \[
        u_h(x) = \sqrt{2\omega} \operatorname{sech} \left( \sqrt{\omega}x + b \right),
    \]
    for $x \in [0, \infty)$. As $u_c'(-L) = -u_c'(L)$, the condition 
    \[
        u_h'(0) = \gamma u_h(0)
    \]
    holds in order to satisfy $\delta$-condition. Thus, a direct computation shows that
    \[
        b = -\operatorname{arctanh} \left( \frac{\gamma}{\sqrt{\omega}} \right).
    \]
    Finally, we obtain that $u_c \equiv \sqrt{\omega}$ if $\gamma = \sqrt{\omega / 2}$ as $(\sqrt{\omega}, 0)$ is a fixed point of the phase portrait.

    The point $(v)$ comes from \eqref{eqShapeCircle4} and Lemma \ref{lemShapeLine}-$(iii)$. The translation parameter $a$ of $u_c$ is is either $a = 0$ or $a = \sqrt{(1 - 2k^2)/\omega}K(k)$ in order for $u_c$ to be even.
\end{proof}

\subsection{Non-even stationary states}

In this subsection, we study solutions which are not even on the compact edge. We define the set
\[
    \Gamma_{\omega, \gamma}^4 = \left\{ \left( \sqrt{2(\omega - \gamma^2)}, q \right): q \in \mathbb R \setminus \{ 0 \} \text{ and } \mathcal E_\omega \left( \sqrt{2(\omega - \gamma^2)}, q \right) < 0 \right\}.
\]

For a solution $u = (u_c, u_h)$ of \eqref{eqNLS} such that $u_c$ is non-even, the following lemma gives the localization of $(u_c(-L), u_c'(-L))$ on the phase portrait and the shape of $u_h$. 

\begin{lemma} \label{lemNonEvenShape}
    Let $\omega > 0$, $\gamma < \sqrt{\omega}$ and $L > 0$ be fixed. Assume that $u = (u_c, u_h)$ is a solution of \eqref{eqNLS} such that $u_c$ is not even on $c$. Then,
    \begin{equation} \label{eqNonEvenVertex}
        \left( u_c(-L), u_c'(-L) \right) = \left( u_c(L), u_c'(L) \right)
    \end{equation}
    Furthermore,
    \begin{equation} \label{eqNonEvenShape}
        (u_c(-L), u_c'(-L)) \in \Gamma_{\omega, \gamma}^4.
    \end{equation}
    and, for $x \in [0, \infty)$,
    \begin{equation} \label{eqNonEvenLine}
        u_h(x) = \sqrt{2\omega} \operatorname{sech}\left( \sqrt{\omega} x - \operatorname{arctanh} \left( \frac{\gamma}{\sqrt{\omega}} \right) \right).
    \end{equation}
\end{lemma}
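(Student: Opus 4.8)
The plan is to combine two facts already at our disposal: that $u_h$ is a hyperbolic secant (Remark~\ref{rkSech}) and that $u_c$ solves the autonomous equation~\eqref{eqNLS1D}, along whose flow the quantity $\mathcal{E}_\omega$ defined in~\eqref{eqIntegralMotion} is conserved. I would first prove~\eqref{eqNonEvenVertex}. Writing $p := u_c(-L) = u_c(L)$ (continuity at the vertex) and $E := \mathcal{E}_\omega(u_c, u_c')$, both $u_c'(-L)$ and $u_c'(L)$ are roots of the equation $\mathcal{E}_\omega(p, q) = E$ in the variable $q$; since this equation is even in $q$, necessarily $u_c'(L) = \pm u_c'(-L)$. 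The minus sign yields the even case: setting $w(x) := u_c(-x)$, which again solves~\eqref{eqNLS1D} because the equation contains no first-order term, the hypothesis $u_c'(L) = -u_c'(-L)$ makes $w$ and $u_c$ share the same Cauchy data at $x = -L$, so uniqueness for the initial value problem would force $u_c$ to be even, contrary to assumption. Hence $u_c'(L) = u_c'(-L)$, which is~\eqref{eqNonEvenVertex}.

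Next I would read off~\eqref{eqNonEvenLine} and the first coordinate of the endpoint. By Remark~\ref{rkSech} we have $u_h(x) = \sqrt{2\omega}\operatorname{sech}(\sqrt{\omega}x + b)$ for some $b \in \mathbb{R}$. Inserting $u_c'(L) = u_c'(-L)$ into the $\delta$-vertex condition $u_c'(L) - u_c'(-L) + u_h'(0) = \gamma u(\operatorname{v})$ collapses it to the Robin condition $u_h'(0) = \gamma u_h(0)$. A direct computation with the explicit secant then gives $\tanh(b) = -\gamma/\sqrt{\omega}$, that is $b = -\operatorname{arctanh}(\gamma/\sqrt{\omega})$, which is well defined exactly because $\gamma < \sqrt{\omega}$; this is~\eqref{eqNonEvenLine}. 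Using $\operatorname{sech}^2 b = 1 - \tanh^2 b = 1 - \gamma^2/\omega$ we obtain $p = u_h(0) = \sqrt{2\omega}\operatorname{sech}(b) = \sqrt{2(\omega - \gamma^2)}$, which pins the first coordinate of $(u_c(-L), u_c'(-L))$.

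It remains to establish~\eqref{eqNonEvenShape}, i.e. that $q := u_c'(-L)$ satisfies $q \neq 0$ and $\mathcal{E}_\omega(p, q) < 0$. If $q = 0$, the reflection argument of the first paragraph applies verbatim (with both $u_c'(\pm L)$ vanishing) and forces $u_c$ to be even, so $q \neq 0$. The sign of $\mathcal{E}_\omega(p,q)$ is the heart of the matter, and I would argue it through the phase portrait. By~\eqref{eqNonEvenVertex} the phase curve $x \mapsto (u_c(x), u_c'(x))$ returns to its initial value $(p,q)$ after $x$ traverses the interval of length $2L$; since the flow is autonomous, this forces $2L$ to be an integer multiple of the period of the orbit through $(p,q)$. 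I would then rule out $\mathcal{E}_\omega(p,q) \geq 0$: if $\mathcal{E}_\omega(p,q) = 0$ the point $(p,q)$, being distinct from the fixed point $(0,0)$ (as $q \neq 0$ and $p > 0$), lies on the homoclinic orbit, which has infinite period and cannot close up over the finite length $2L$; if $\mathcal{E}_\omega(p,q) > 0$ the orbit is of cnoidal type and crosses the line $\{p = 0\}$, so completing at least one full period (as $L > 0$) forces $u_c$ to take negative values, contradicting $u_c \geq 0$. Therefore $\mathcal{E}_\omega(p,q) < 0$, and $(u_c(-L), u_c'(-L)) \in \Gamma_{\omega,\gamma}^4$. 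The main obstacle is exactly this last dichotomy—converting the combination of non-evenness and non-negativity into the strict inequality $\mathcal{E}_\omega < 0$ via the closing-up argument—while the remaining steps reduce to reflection symmetry and elementary hyperbolic identities.
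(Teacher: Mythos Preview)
Your proof is correct and follows essentially the same route as the paper's: the reflection argument via Cauchy uniqueness to get \eqref{eqNonEvenVertex}, the collapse of the $\delta$-condition to a Robin condition yielding \eqref{eqNonEvenLine} and $u_c(-L)=\sqrt{2(\omega-\gamma^2)}$, and then ruling out the homoclinic and cnoidal orbits. Your treatment is in fact slightly more explicit than the paper's in two places---you spell out why $q\neq 0$, and your ``infinite period of the homoclinic'' argument is clearer than the paper's terse ``impossible by continuity at the vertex''---but the underlying ideas coincide.
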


\begin{proof}
    Let $u = (u_c, u_h)$ is a solution of \eqref{eqNLS} such that $u_c$ is not even. By continuity at the vertex and the fact that $\mathcal E_\omega(u_c(x), u_c'(x))$ is constant for $x \in [-L, L]$, we have that either $(u_c(-L), u_c'(-L)) = (u_c(L), -u_c'(L))$ or $(u_c(-L), u_c'(-L)) = (u_c(L), u_c'(L))$. Assume by contradiction that $u_c'(-L) = -u_c'(L)$ and let $x \in (0, L)$. Then, by uniqueness of the Cauchy problem and invariance of the equation under the symmetry $x \mapsto -x$, we obtain that the point $(u_c(-x), u_c'(-x))$ is the symmetric of $(u_c(x), u_c'(x))$ with respect to the $p$-axis. Hence, $u_c(x) = u_c(-x)$ for $x \in [-L, L]$ and $u_c$ is even, which is a contradiction. This proves \eqref{eqNonEvenVertex}.

    As \eqref{eqNonEvenVertex} holds, $u_h$ solves the equation \eqref{eqNLS1D} on $[0, \infty)$ with Robin-boundary condition
    \[
        u_h'(0) = \gamma u_h(0).
    \]
    Thus, a direct computation shows \eqref{eqNonEvenLine}. Thus, $u_c(-L) = \sqrt{2(\omega - \gamma^2)}$. The case where $u_c$ is an hyperbolic secant is impossible by continuity at the vertex. Furthermore, as $u_c$ is assumed non-negative and \eqref{eqNonEvenVertex} holds, the case where $u_c$ is a cnoidal-type solution is discarded. Thus, $u_c$ is a dnoidal-type solution and \eqref{eqNonEvenShape} holds. This concludes the proof.
\end{proof}

\begin{lemma} \label{lemNonEvenPeriods}
    Let $\omega > 0$, $\gamma < \sqrt{\omega}$ and $L > 0$ be fixed. Assume that $u = (u_c, u_h)$ is a solution of \eqref{eqNLS} such that $u_c$ is not even on $c$. Then, there exists $n \in \mathbb N$ such that
    \begin{equation} \label{eqNonEvenPeriods}
        L = n \left( \mathcal T_+ \left( u_c(-L), u_c'(-L) \right) + \mathcal T_- \left( u_c(-L), u_c'(-L) \right) \right).
    \end{equation}
    In particular, $L = nT/2$, where $T$ is the period of $u_c$.
\end{lemma}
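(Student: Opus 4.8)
The plan is to exploit the matching condition \eqref{eqNonEvenVertex} from Lemma~\ref{lemNonEvenShape} together with the uniqueness of the Cauchy problem for the ordinary differential equation \eqref{eqNLS1D}, and then to convert the resulting periodicity statement into the period-function identity via \eqref{eqPeriodSolution}.

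First I would record the qualitative information already available. By Lemma~\ref{lemNonEvenShape}, the inclusion \eqref{eqNonEvenShape} places $(u_c(-L), u_c'(-L))$ in $\Gamma_{\omega, \gamma}^4$, so that $\mathcal E_\omega(u_c(-L), u_c'(-L)) < 0$ and $u_c$ is a dnoidal-type solution of \eqref{eqNLS1D}. Since the definition of $\Gamma_{\omega, \gamma}^4$ imposes $u_c'(-L) \neq 0$, the solution $u_c$ is non-constant, hence periodic with a well-defined minimal period $T > 0$. Extending $u_c$ to all of $\mathbb R$ as a solution of \eqref{eqNLS1D}, the key input becomes \eqref{eqNonEvenVertex}, namely $(u_c(-L), u_c'(-L)) = (u_c(L), u_c'(L))$.

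Next I would argue that this matching forces $2L$ to be a period of $u_c$. Indeed, the two functions $x \mapsto u_c(x)$ and $x \mapsto u_c(x + 2L)$ are both solutions of the autonomous equation \eqref{eqNLS1D}, and by \eqref{eqNonEvenVertex} they share the same Cauchy data $(u_c(-L), u_c'(-L))$ at $x = -L$. By uniqueness of the Cauchy problem they coincide everywhere, so that $u_c(x + 2L) = u_c(x)$ for all $x$; thus $2L$ is a period. Because the set of periods of the non-constant continuous function $u_c$ is a discrete subgroup of $(\mathbb R, +)$ generated by its minimal period $T$, we conclude that $2L = nT$ for some $n \in \mathbb N$ with $n \geq 1$ (as $L > 0$), that is, $L = nT/2$.

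Finally, I would invoke the period relation \eqref{eqPeriodSolution} evaluated at the phase point $(u_c(-L), u_c'(-L))$, giving $T = 2\left( \mathcal T_+(u_c(-L), u_c'(-L)) + \mathcal T_-(u_c(-L), u_c'(-L)) \right)$; note that this point lies in $(\mathbb R_+ \times \mathbb R^*) \setminus \mathcal E_{\omega, 0}$, so both period functions are well defined there by \eqref{eqPeriodFunctions}. Substituting into $L = nT/2$ yields exactly \eqref{eqNonEvenPeriods}. I expect no serious obstacle: the argument is essentially a rigidity statement coming from ODE uniqueness, and the only point requiring a little care is the passage from ``$2L$ is a period'' to ``$2L$ is an integer multiple of the minimal period'', which is the standard structure theorem for the period group of a non-constant periodic function.
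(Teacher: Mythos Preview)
Your argument is correct and follows the same approach as the paper: the paper's proof is the single sentence ``This is a direct consequence of \eqref{eqPeriodSolution} and \eqref{eqNonEvenVertex},'' and your proposal simply unpacks that consequence by making explicit the ODE-uniqueness step (matching Cauchy data at $\pm L$ forces $2L$ to be a period) and the period-group step ($2L$ is therefore an integer multiple of the minimal period $T$). Nothing is missing and nothing diverges from the paper's route.
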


\begin{proof}
     This is a direct consequence of \eqref{eqPeriodSolution} and \eqref{eqNonEvenVertex}.
\end{proof}

\begin{lemma} \label{lemNonEvenExistence}
    Let $\omega > 0$ and $\gamma < \sqrt{\omega}$ be fixed. Then, there exists $\tilde{L} = \tilde{L}(\omega, \gamma) \geq 0$ (with $\tilde{L} = 0$ if and only if $\gamma = \sqrt{\omega / 2}$) such that a solution $u = (u_c, u_h)$ of \eqref{eqNLS} with $u_c$ being non-even exists if and only if $L > \tilde{L}$. Furthermore, if $\gamma \neq \sqrt{\omega / 2}$, then there exists a finite number of those solutions.
\end{lemma}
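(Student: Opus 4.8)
The plan is to reduce the existence and the counting of non-even solutions to the study of a single scalar function — the half-period along $\Gamma_{\omega,\gamma}^4$ — and then to read off the threshold $\tilde L$ from the vanishing locus of the period functions of Lemma \ref{lemPeriodFunc}.

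First I would record that, by Lemmas \ref{lemNonEvenShape} and \ref{lemNonEvenPeriods}, a non-even solution $u=(u_c,u_h)$ is completely encoded by its vertex data together with a winding number: the tail $u_h$ is fixed by \eqref{eqNonEvenLine}, the point $(u_c(-L),u_c'(-L))$ must lie on $\Gamma_{\omega,\gamma}^4$ — that is, $u_c(-L)=p:=\sqrt{2(\omega-\gamma^2)}$ is prescribed while $q:=u_c'(-L)$ ranges over $(-q_{\max},q_{\max})\setminus\{0\}$ with $q_{\max}=\gamma\sqrt{2(\omega-\gamma^2)}$ — and there must exist $n\in\mathbb N$, $n\ge 1$, with $L=n\,\Theta(q)$, where I abbreviate $\Theta(q):=\mathcal T_+(p,q)+\mathcal T_-(p,q)$. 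Conversely, for any such pair $(q,n)$ one reconstructs a genuine solution: take $u_c$ to be the positive dnoidal orbit of half-period $L/n$ passing through $(p,q)$ and $u_h$ as in \eqref{eqNonEvenLine}; the $\delta$-condition reduces to the Robin condition $u_h'(0)=\gamma u_h(0)$ because $2L=nT$ forces $u_c'(-L)=u_c'(L)$, and the solution is non-even precisely because $q\neq 0$ (evenness would impose $u_c'(-L)=-u_c'(L)$, incompatible with \eqref{eqNonEvenVertex}). Thus the entire statement is equivalent to an assertion about the range of $\Theta$.

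Next I would analyse $\Theta$. Since $\mathcal T_++\mathcal T_-$ is half the period of the orbit through $(p,q)$ by \eqref{eqPeriodSolution}, it depends on $(p,q)$ only through the energy $E=\mathcal E_\omega(p,q)=q^2-2\gamma^2(\omega-\gamma^2)$, and it is a strictly increasing function of $E$ on $(-\omega^2/2,0)$; as $E(q)=q^2-2\gamma^2(\omega-\gamma^2)$ increases with $|q|$, the map $\Theta$ is continuous, even in $q$, and strictly increasing in $|q|$. For the boundary behaviour: as $|q|\to q_{\max}$ one has $E\to 0$, the orbit degenerates to the separatrix, and $\Theta(q)\to\infty$ by \eqref{eqPeriod6}; as $q\to 0$, continuity of the period functions (Lemma \ref{lemPeriodFunc}) gives $\Theta(q)\to \tilde L:=\mathcal T_+(p,0)+\mathcal T_-(p,0)$. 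Consequently $\Theta$ maps $(0,q_{\max})$ bijectively and increasingly onto $(\tilde L,\infty)$.

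From this the existence dichotomy follows at once. If $L>\tilde L$, choose $n=1$ and solve $\Theta(q)=L$ by the intermediate value theorem, producing a non-even solution; conversely any non-even solution satisfies $L=n\Theta(q)\ge\Theta(q)>\tilde L$, so $L\le\tilde L$ precludes existence. It remains to identify $\tilde L$: by \eqref{eqPeriod2} one has $\mathcal T_+(p,0)=0$ iff $p\ge\sqrt{\omega}$, and by \eqref{eqPeriod5} $\mathcal T_-(p,0)=0$ iff $p\le\sqrt{\omega}$, so $\tilde L=0$ exactly when $p=\sqrt{\omega}$, i.e. $2(\omega-\gamma^2)=\omega$, i.e. $\gamma=\sqrt{\omega/2}$, and $\tilde L>0$ otherwise (one of the two terms being then strictly positive). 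For the finiteness claim when $\gamma\neq\sqrt{\omega/2}$, the bound $\tilde L>0$ does the work: $L=n\Theta(q)>n\tilde L$ forces $n<L/\tilde L$, leaving only finitely many admissible $n$, and for each of them the strict monotonicity of $\Theta$ in $|q|$ yields at most two values of $q$ with $\Theta(q)=L/n$; hence finitely many non-even solutions.

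The step I expect to be the main obstacle is the fine behaviour of the period function underlying the second paragraph: establishing the strict monotonicity of the dnoidal half-period in the energy (which is what makes the range exactly $(\tilde L,\infty)$ with a \emph{strict} threshold, and what gives the two-fold count for each $n$), together with the control of the limit $q\to 0^+$ that pins down $\tilde L$ — a point that is especially delicate in the degenerate case $p=\sqrt{\omega}$, where the orbit collapses onto the center $(\sqrt{\omega},0)$. This is precisely the circle of questions treated by the period-function analysis of \cite{AgCoTa24,AgCoTa25}.
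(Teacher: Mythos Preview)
Your approach is correct and shares the paper's overall route: both reduce the problem to the range of the half-period $\mathcal T_+ + \mathcal T_-$ along the vertical segment $\Gamma_{\omega,\gamma}^4$, with the threshold $\tilde L$ emerging as its infimum. The genuine difference is that you import the \emph{strict monotonicity} of the dnoidal half-period in the energy (citing \cite{AgCoTa24,AgCoTa25}), whereas the paper uses only the soft content of Lemma~\ref{lemPeriodFunc} (continuity and the blow-up \eqref{eqPeriod6} at the separatrix). Your extra input buys a sharper picture: the image of $\Theta$ is exactly the open interval $(\tilde L,\infty)$, so the ``only if'' direction and the borderline case $L=\tilde L$ are handled transparently, and finiteness follows from the explicit bound $n<L/\tilde L$ together with at most two $q$-values per admissible $n$. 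The paper instead defines $\tilde L$ directly as $\inf\{\mathcal T_+ + \mathcal T_- : (p,q)\in\Gamma_{\omega,\gamma}^4\}$, obtains existence for $L>\tilde L$ via the intermediate value theorem, and argues finiteness by contradiction in the style of Step~2 of Lemma~\ref{lemShapeFinite}: infinitely many solutions would force the winding numbers to diverge, hence $\mathcal T_\pm\to 0$, hence accumulation at the center $(\sqrt\omega,0)$, which lies outside the closure of $\Gamma_{\omega,\gamma}^4$ when $\gamma\neq\sqrt{\omega/2}$. The paper's argument is thus more self-contained but is less explicit about why each fixed $n$ contributes only finitely many $q$; your monotonicity settles precisely that point. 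Your closing caveat about the degenerate case $p=\sqrt{\omega}$ is well placed --- the paper treats it by the one-line observation that the center is then in $\overline{\Gamma_{\omega,\gamma}^4}$, without discussing the continuity of $\mathcal T_\pm$ there.
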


\begin{proof}
    We divide the proof into two steps. In the first one, we prove the existence criterion and build a solution of \eqref{eqNLS} which is even on $c$ under it. In the second one, we prove that there exists a finite number of them for $\gamma \neq \sqrt{\omega / 2}$.
    
    \textit{Step 1:} Let
    \[
        \tilde{L} = \inf \left\{ \mathcal T_+(p,q) + \mathcal T_-(p,q) :(p,q) \in \Gamma_{\omega, \gamma}^4 \right\}.
    \]
    It is well-defined by Lemma \ref{lemPeriodFunc}. We have that $\tilde{L} > 0$ if $\gamma \neq \sqrt{\omega / 2}$ and $\tilde{L} = 0$ if $\gamma = \sqrt{\omega / 2}$ as the fixed point $(\sqrt{\omega}, 0)$ of the phase portrait belongs to the closure of $\Gamma_{\omega, \gamma}^4$. By Lemma \ref{lemPeriodFunc}, the function $(p,q) \mapsto \mathcal T_+(p,q) + \mathcal T_-(p,q)$ is continuous and we have
    \[
        \sup \left\{ \mathcal T_+(p,q) + \mathcal T_-(p,q) :(p,q) \in \Gamma_{\omega, \gamma}^4 \right\} = \infty.
    \]
    So, for $L > \tilde{L}$, there exists $(\tilde{p}, \tilde{q}) \in \Gamma_{\omega, \gamma}^4$ such that
    \[
        L = \mathcal T_+ \left( \tilde{p}, \tilde{q} \right) + \mathcal T_-\left( \tilde{p}, \tilde{q} \right).
    \]
    Thus, considering the set
    \[
        \mathcal E_{\omega, \mathcal E_\omega (\tilde{p}, \tilde{q})} \cup \mathcal E_{\omega, 0}^{\tilde{p}, 1} \cup \mathcal E_{\omega, 0}^{\tilde{p}, 2}
    \]
    and the flow of the phase portrait, we build $u = (u_c, u_h) \in H^2(\mathcal{G}_L)$ such that $u_c$ satisfies
    \begin{align*}
        & u_c(-L) = u_c(L) = \tilde{p}, \\
        & u_c'(-L) = -u_c'(L) = \tilde{q}, \\
        & \mathcal E_{\omega} (u_c(x), u_c'(x)) = \mathcal E_{\omega} \left( \tilde{p}, \tilde{q} \right)
    \end{align*}
    for $x \in [-L, L]$ and $u_h$ satisfies
    \[
        u_h(x) = \sqrt{2 \omega} \operatorname{sech} \left( \sqrt{\omega} x - \operatorname{arctanh} \left( \frac{\gamma}{\sqrt{\omega}} \right) \right)
    \]
    for $x \in [0, \infty)$. This concludes the first step of the proof.
    
    \textit{Step 2:} Assume $L > \tilde{L}$ and $\gamma \neq \sqrt{\omega / 2}$. Let $u = (u_c, u_h)$ be a solution of \eqref{eqNLS} such that $u_c$ is not even on $[-L,L]$. By Lemma \ref{lemNonEvenPeriods}, there exists $n \in \mathbb N$ such that
    \begin{equation*} \label{eqOddLength}
        L = n \left( \mathcal T_+ \left( \sqrt{2 \left( \omega - \gamma^2 \right)}, u_c'(-L) \right) + \mathcal T_- \left( \sqrt{2 \left( \omega - \gamma^2 \right)}, u_c'(-L) \right) \right).
    \end{equation*}
    We may reproduce the argument of Step 2 of the proof of Lemma \ref{lemShapeFinite}. Again, the assumption $\gamma \neq \sqrt{\omega/2}$ prevents the case where both period functions can converge to $0$ and the number of periods of the solution on the compact edge goes to $\infty$. This concludes the second step of the proof.
\end{proof}

\begin{figure}[ht]
    \centering
    \includegraphics[width=0.4\linewidth]{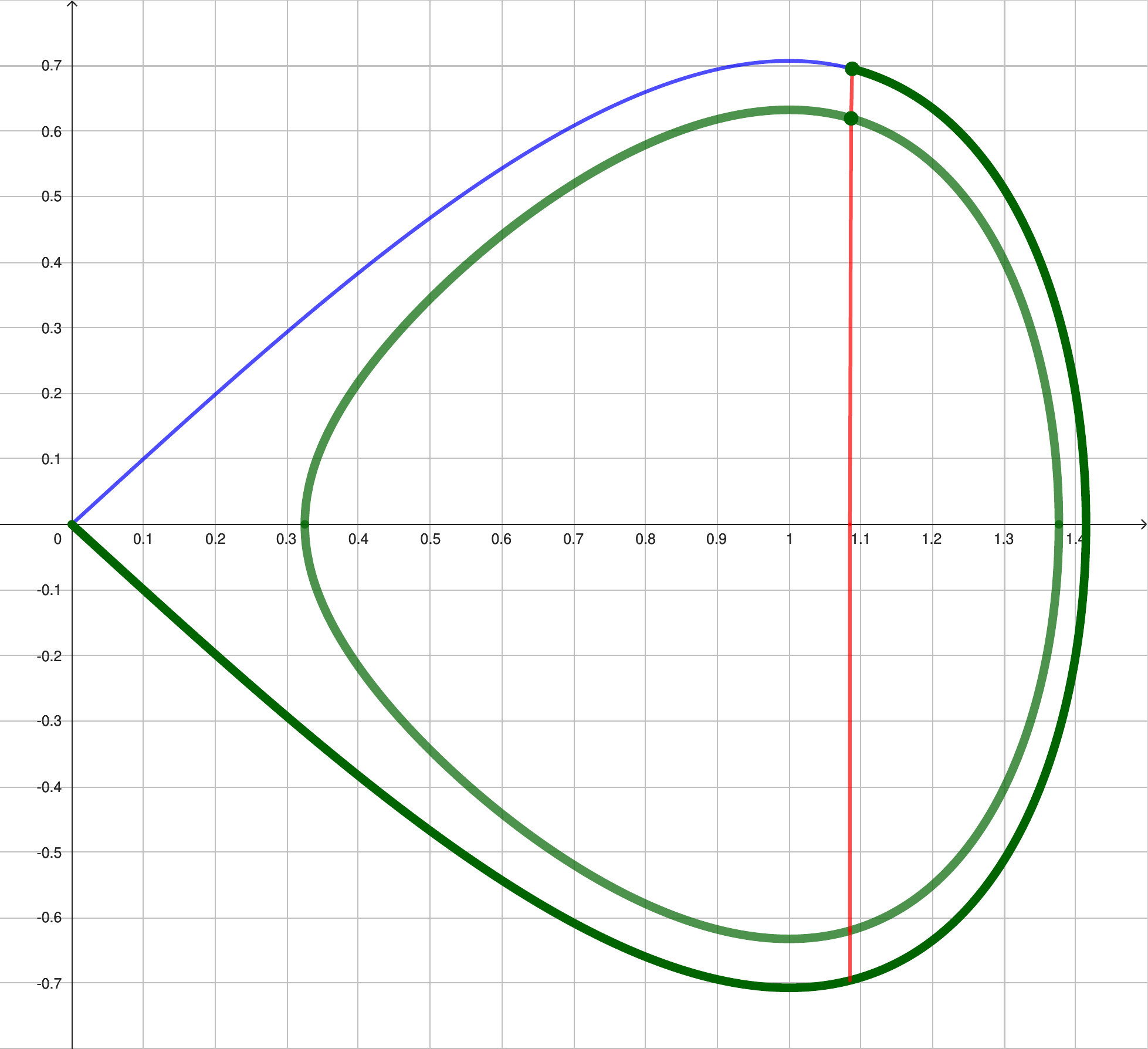}
    \caption{$\mathcal E_\omega$-curve of level $0$ (blue); $\Gamma_{\omega, \gamma}^4$ (red); and the solution built in Step $1$ of the proof of Lemma \ref{lemNonEvenExistence} (green).}
    \label{figSolution3}
\end{figure}

Figure \ref{figSolution3} represents the construction of the solution in Step $1$ of the proof of Lemma \ref{lemNonEvenExistence}.

The following proposition holds.

\begin{proposition} \label{propShapeOdd}
    Let $\omega > 0$, $\gamma < \sqrt{\omega}$ and $L > \tilde{L}$ be fixed, where $\tilde{L}$ has been defined in \ref{lemNonEvenExistence}. Let $u = (u_c, u_h)$ be a solution of \eqref{eqNLS} such that $u_c$ is not even on $[-L,L]$. Then,
    \begin{align*}
        u_c(x) & = \sqrt{\frac{2\omega}{2-k^2}} \operatorname{dn} \left( \sqrt{\frac{\omega}{2-k^2}}x + a; k \right) & \text{ for } x \in & [-L, L], \\
        u_h(x) & = \sqrt{2\omega} \operatorname{sech}\left( \sqrt{\omega} x - \operatorname{arctanh} \left( \frac{\gamma}{\sqrt{\omega}}\right) \right) & \text{ for } x \in & [0, \infty),
    \end{align*}
    with $a \in \mathbb R$ and $k \in (0,1)$ satisfying $u_c(-L) = u_c(L) = u_h(0)$ and $n\sqrt{(2 - k^2)\omega}K(k) = L$ for some $n \in \mathbb N$.
\end{proposition}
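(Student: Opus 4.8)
The plan is to assemble the three preceding lemmas of this subsection, since the proposition is essentially a synthesis of the shape, vertex, and period information already established for non-even solutions. First I would invoke Lemma \ref{lemNonEvenShape}, which gives directly that $u_h$ is of the claimed form \eqref{eqNonEvenLine} and that $(u_c(-L), u_c'(-L)) \in \Gamma_{\omega, \gamma}^4$. By the very definition of $\Gamma_{\omega, \gamma}^4$, this forces $\mathcal E_\omega(u_c(-L), u_c'(-L)) < 0$, and by the classification of solutions recalled in Section \ref{secPreliminaries} (dnoidal-type solutions correspond precisely to $-\omega^2/2 \le \mathcal E_\omega(\phi, \phi') < 0$), the component $u_c$ must be a dnoidal-type solution of the form \eqref{eqSolDn} for some $k \in [0,1)$ and some translation parameter $a \in \mathbb R$. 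This yields the stated expression for $u_c$.

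Next I would rule out the degenerate value $k = 0$: since $\operatorname{dn}(\cdot\,; 0) \equiv 1$, the choice $k = 0$ would make $u_c$ constant, hence even on $c$, contradicting the standing hypothesis that $u_c$ is not even. Therefore $k \in (0,1)$, as claimed. The continuity relation $u_c(-L) = u_c(L) = u_h(0)$ is part of the vertex condition satisfied by any solution of \eqref{eqNLS} (i.e. membership in $D(\operatorname{H}_\gamma)$), while the equality $u_c(-L) = u_c(L)$ together with $u_c'(-L) = u_c'(L)$ is precisely the content of \eqref{eqNonEvenVertex}.

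Finally, to obtain the quantization of the length, I would apply Lemma \ref{lemNonEvenPeriods}, which provides an integer $n \in \mathbb N$ with $L = nT/2$, where $T$ is the period of $u_c$. Since $u_c$ is dnoidal of the form \eqref{eqSolDn}, its period is $T = 2\sqrt{(2 - k^2)/\omega}\,K(k)$, whence $L = n \sqrt{(2 - k^2)/\omega}\,K(k)$, which is the announced relation. There is no genuine obstacle here beyond careful bookkeeping: the only points requiring attention are the exclusion of $k = 0$ (which is exactly what guarantees non-evenness) and the correct reading of the dnoidal period, both of which follow immediately from the material collected in Section \ref{secPreliminaries}.
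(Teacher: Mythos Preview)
Your proof is correct and follows essentially the same approach as the paper: invoke Lemma \ref{lemNonEvenShape} for the shape of $u_h$ and the dnoidal nature of $u_c$, then Lemma \ref{lemNonEvenPeriods} for the period quantization $L = nT/2$. Your write-up is in fact more detailed than the paper's own proof, which is very terse; in particular you spell out explicitly why $\mathcal E_\omega(u_c(-L),u_c'(-L))<0$ forces the dnoidal type and why $k=0$ must be excluded, points the paper leaves implicit.
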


\begin{proof}
    Let $u = (u_c, u_h)$ be a non-negative solution of the equation \eqref{eqNLS} such that $u_c$ is not even on $[-L,L]$. Its existence is guaranteed by Lemma \ref{lemNonEvenExistence}. The shape of $u_h$ is given by Lemma \ref{lemNonEvenShape}. Furthermore, $u_c$ has period $T = 2\sqrt{(2 - k^2)/\omega}K(k)$ such that $L = nT / 2$ by Lemma \ref{lemNonEvenPeriods}. This concludes the proof.
\end{proof}

We conclude this section by proving Theorem \ref{thmMain}.
\begin{proof}[Proof of Theorem \ref{thmMain}]
    The proof is a combination of Proposition \ref{propExistence}, Proposition \ref{propShapeEven} and Proposition \ref{propShapeOdd}.
\end{proof}

Figure \ref{figSet1} (respectively \ref{figSet2}) shows the set $\Gamma_{\omega, \gamma}^1 \cup \Gamma_{\omega, \gamma}^2 \cup \Gamma_{\omega, \gamma}^3 \cup \Gamma_{\omega, \gamma}^4$ for $\gamma \neq \sqrt{\omega / 2}$ (respectively $\gamma = \sqrt{\omega / 2}$), where the action ground state $u$ takes its value $(u_c(-L), u_c'(-L))$.

\begin{figure}[ht]
    \centering
    \begin{subfigure}{0.4\textwidth}
        \centering
        \includegraphics[width=\linewidth]{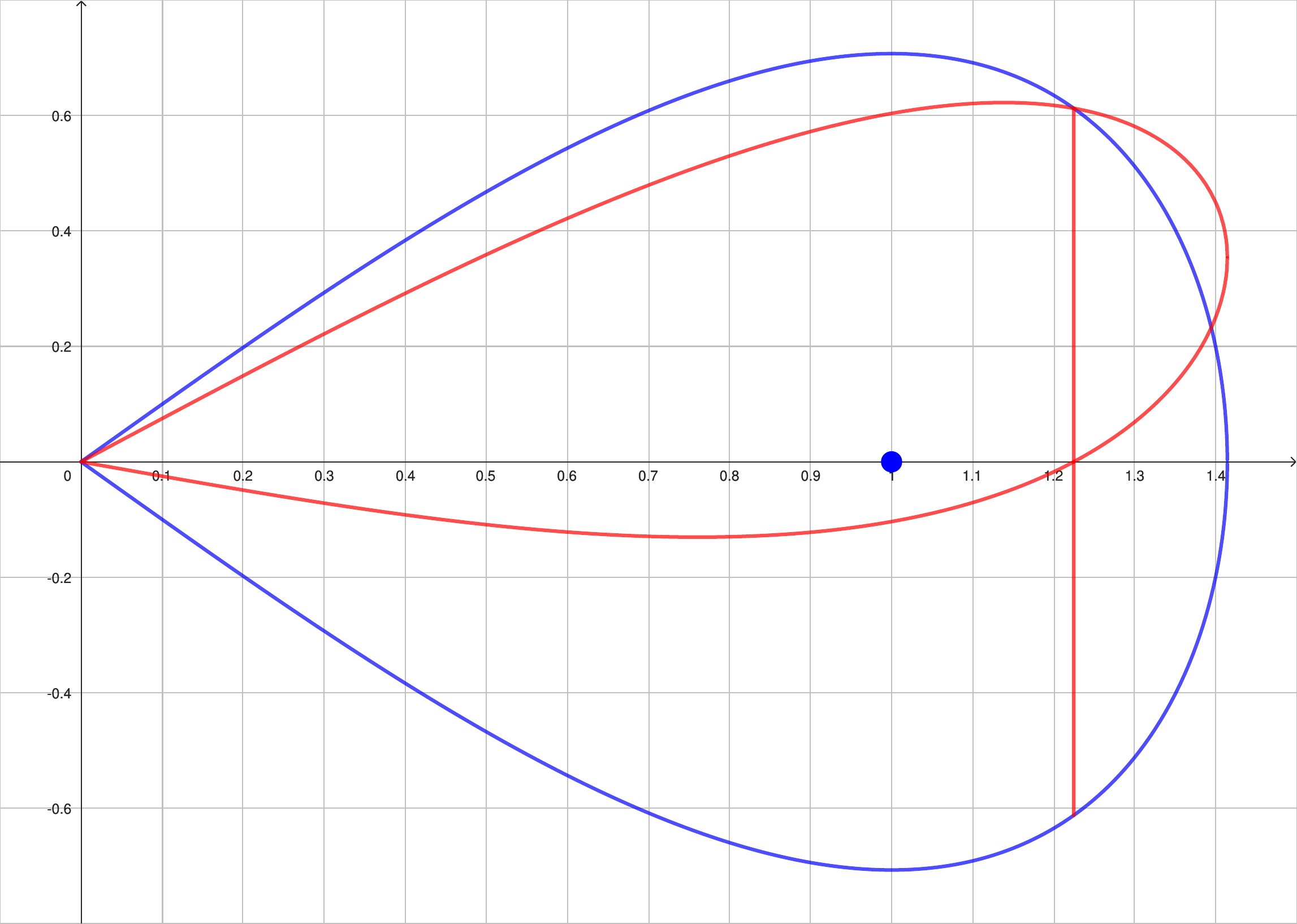}
        \caption{Case $\gamma \neq \sqrt{\omega / 2}$.}
        \label{figSet1}
    \end{subfigure}
    \hfill
    \begin{subfigure}{0.4\textwidth}
        \centering
        \includegraphics[width=\linewidth]{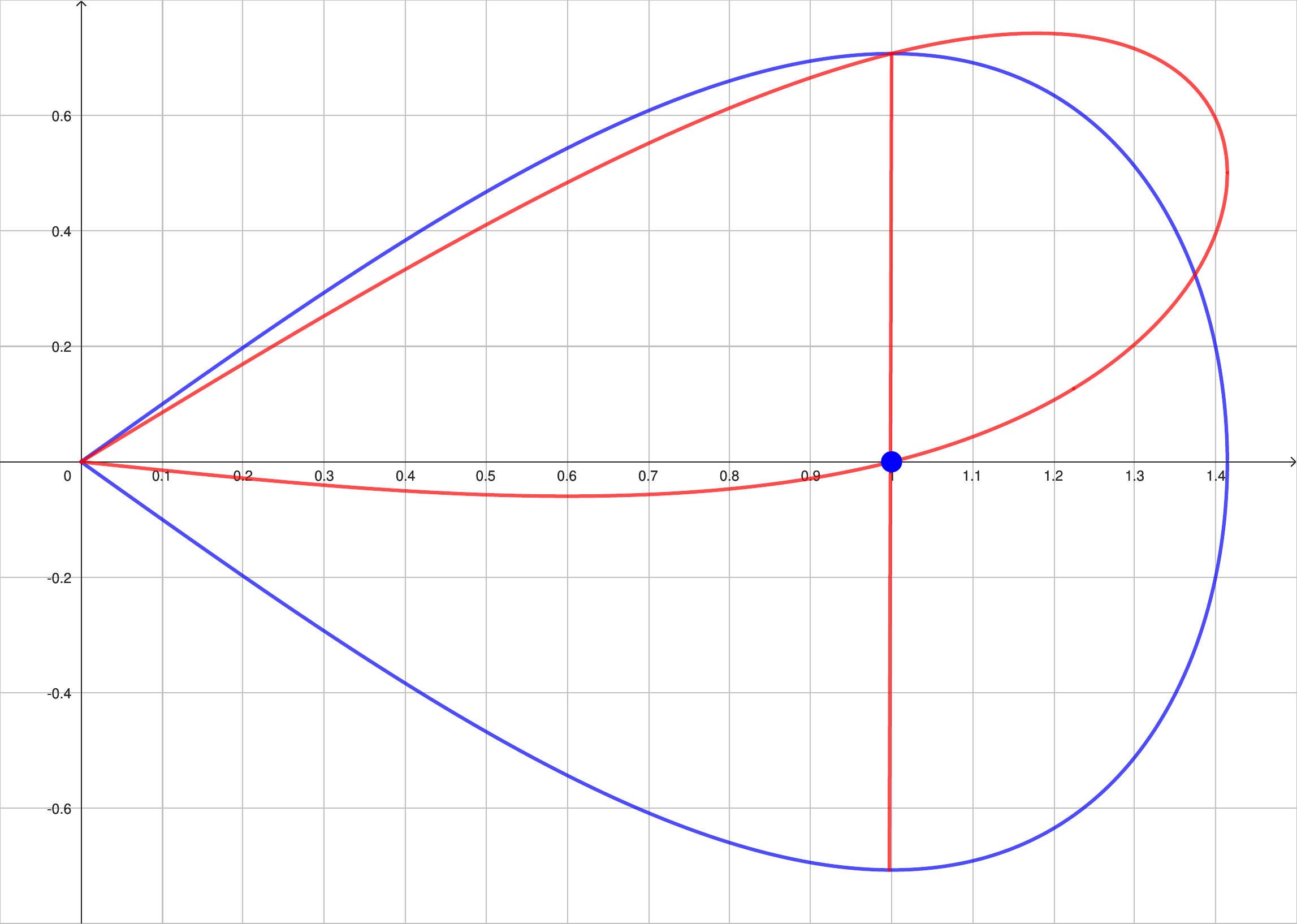}
        \caption{Case $\gamma = \sqrt{\omega / 2}$.}
        \label{figSet2}
    \end{subfigure}
    \caption{The $\mathcal E_\omega$-curve of level $0$ (blue curve); the fixed point $(\sqrt{\omega}, 0)$ (blue dot); and $\Gamma_{\omega, \gamma}^1 \cup \Gamma_{\omega, \gamma}^2 \cup \Gamma_{\omega, \gamma}^3 \cup \Gamma_{\omega, \gamma}^4$ (red).}
\end{figure}

\section{Numerical results}
\label{sec:numerics}
In this section, we discuss the numerical results we obtained through a discretized gradient flow method. We first consider the set of parameters $\omega > 0$ and $\gamma < \sqrt{\omega}$ given by Theorem \ref{thmMain} and show that a numerical action ground state $u = (u_c, u_h)$ exists for any length $L$. Furthermore, the point $(u_c(-L), u_c'(-L))$ follows the curve obtained by $\Gamma_{\omega, \gamma}^1 \cup \Gamma_{\omega, \gamma}^2$ from $(\sqrt{2(\omega - \gamma^2)}, 0)$ to $(0, 0)$ as $L$ goes from $0$ to $\infty$.

In order to minimize the action $S_{\omega, \gamma, L}$ on the Nehari manifold $\mathcal{N}_{\omega, \gamma, L}$, we use a discretized gradient flow method. It constructs a sequence $(u_n)_{n \in \mathbb N^*} \subset \mathcal N_{\omega, \gamma, L}$ which converges to a numerical action ground state $u$. This sequence is given by
\begin{equation} \label{eqGradFlow}
    \left\{
        \begin{aligned}
            & u_0 \in H_D^1(\mathcal{G}_L), \\
            &  \tilde{u}_{n+1} = u_n + \delta_t \left( \operatorname{H}_\gamma u_{n+1} + \omega u_{n+1} - |u_n|^2 u_{n+1} \right) & \text{ for } n \in \mathbb N, \\
            & u_{n+1} = \left( \frac{\| \tilde{u}_{n+1}' \|_{L^2(\mathcal{G}_L)}^2 + \omega \| \tilde{u}_{n+1} \|_{L^2(\mathcal{G}_L)}^2}{\| \tilde{u}_{n+1} \|_{L^4(\mathcal{G}_L)}^4} \right) \tilde{u}_{n+1} & \text{ for } n \in \mathbb N.
        \end{aligned}
    \right.
\end{equation}
The last line corresponds to the projection of $\tilde{u}_{n+1}$ on $\mathcal N_{\omega, \gamma, L}$. See \cite{BeDuLe22A} for more details about discretized gradient flow method applied to metric graphs. The numerical experiments were performed using the Python library Grafidi, presented in \cite{BeDuLe22B}.

We now study the shape of the numerical ground state under the conditions $\omega > 0$ and $0 < \gamma < \sqrt{\omega}$, given by Proposition \ref{propExistence}, for different lengths $L$ of the compact edge. We chose the parameters $\omega = 1$, $\gamma = 1/2$ and $L \in \{ 1/10, 1/4, 3/4, 2\}$. Observe that $\gamma < \sqrt{\omega / 2}$. The stopping criterion of the algorithm has been fixed to be
\[
    \frac{\| u_{n+1} - u_n \|_{L^2(\mathcal{G}_L)}^2}{\| u_n \|_{L^2(\mathcal{G}_L)}^2} \leq 10^{-7}.
\]
The initial datum $u_0 = (u_{0, c}, u_{0, h})$ has been fixed to be
\begin{align*}
    u_{0, c}(x) & = \cos \left( \frac{\pi}{2L} x \right) & \text{ for } x & \in [-L, L], \\
    u_{0, h}(x) & = 0 & \text{ for } x & \in [0, \infty).
\end{align*}

We finally obtained Figure \ref{figGS1}, where the method converged to the numerical action ground state $u$ in less than $250$ iterations for each case. We re-parametrized the edge $c$ from $[-L,L]$ to $[-2L,0]$ for better clarity in the figure. We observe that the numerical actions ground states are even. The translation parameter of $u_c$ is $0$, in order for $u_c$ to achieve its maximum at $x = 0$ (so $x = -L$ on the figure, as $c$ is re-parametrized). Furthermore, the parameter $k$ of $u_c$ satisfies $\sqrt{(1-2k^2)/\omega} K(k) > L$ if $u_c$ is either a dnoidal-type solution or a cnoidal-type solution.

\begin{figure}[ht]
    \centering
    \subfloat[$L = 1/10$]{\includegraphics[width=0.4\textwidth]{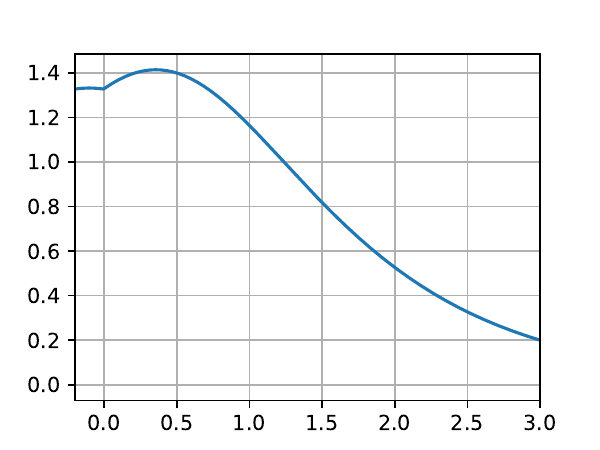}}
    \hfill
    \subfloat[$L = 1/4$]{\includegraphics[width=0.4\textwidth]{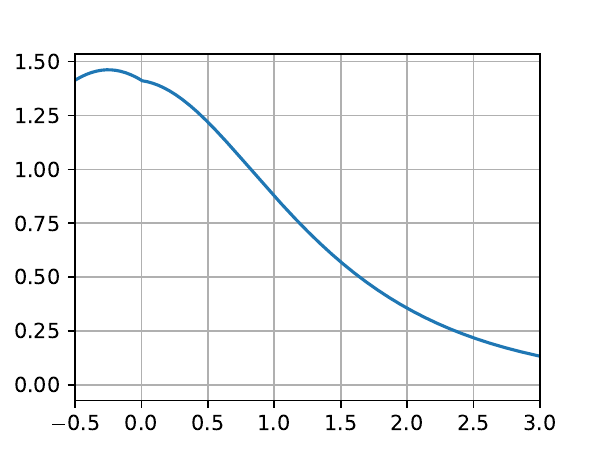}} \\
    \subfloat[$L = 3/4$]{\includegraphics[width=0.4\textwidth]{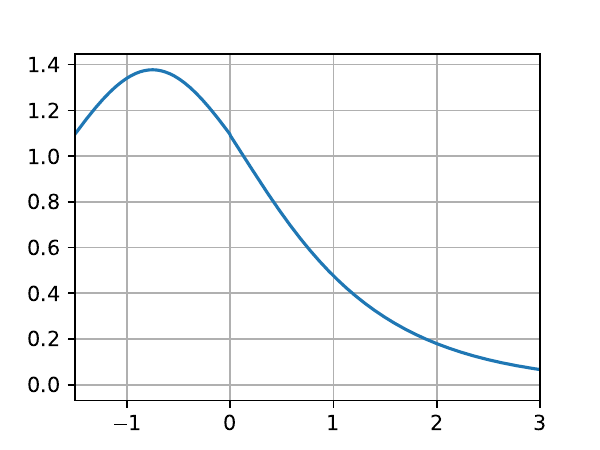}}
    \hfill
    \subfloat[$L = 2$]{\includegraphics[width=0.4\textwidth]{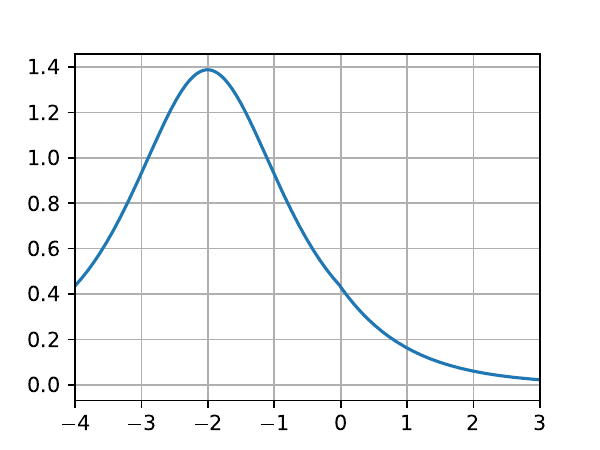}}
    \caption{Numerical action ground state for $\omega = 1$ and $\gamma = 1/2$.}
    \label{figGS1}
\end{figure}

We then plotted in Figure \ref{figGS2} the points $(u_c(-L), u_c'(-L))$ on the phase portrait for each $L$ tested. We observe that the point $(u_c(-L),u_c'(-L))$ follows the $\Gamma_\omega$-curve of level $\gamma$ from $(\sqrt{2(\omega - \gamma^2)},0)$ (which corresponds to the length $L = 0$) to $(0, 0)$ (which corresponds to the length $L = \infty$), following $\Gamma_{\omega, \gamma}^2$ and then $\Gamma_{\omega, \gamma}^1$ as $L$ grows to $\infty$. This observation has been confirmed by simulations with a larger sample of lengths (see Figure \ref{figGS2}). We therefore can conjecture that the action ground states $u = (u_c, u_h)$ has a shape among the ones given by Proposition \ref{propShapeEven} going from $(iv)$ to $(i)$ as $L$ increases from $0$ to $\infty$.

\begin{figure}[ht]
    \centering
    \includegraphics[width=0.6\linewidth]{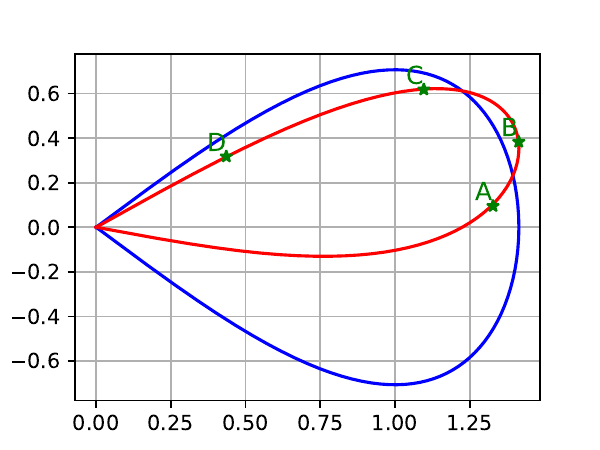}
    \caption{Plot of $(u_c(-L),u_c'(-L))$ (green) $\mathcal E_\omega$-curve of level $0$ (blue) and $\Gamma_\omega$-curve of level $\gamma$ (red); with $\omega = 1$, $\gamma = 1/2$ and $L \in \{ 1/10, 1/4, 3/4, 2\}$.}
    \label{figGS2}
\end{figure}

We performed numerical experiments with a larger sample of $0 < \gamma < \sqrt{\omega / 2}$ and $L > 0$. Those experiments showed that a numerical action ground state exists for any $\gamma$ and $L$ tested and keeps the properties cited above. See Figure \ref{figGS3} for the plot of $(u_c(-L), u_c'(-L))$ for $\omega = 1$, $\gamma = 1/4$ and $L \in [0.02, 5]$.

\begin{figure}[ht]
    \centering
    \includegraphics[width=0.6\linewidth]{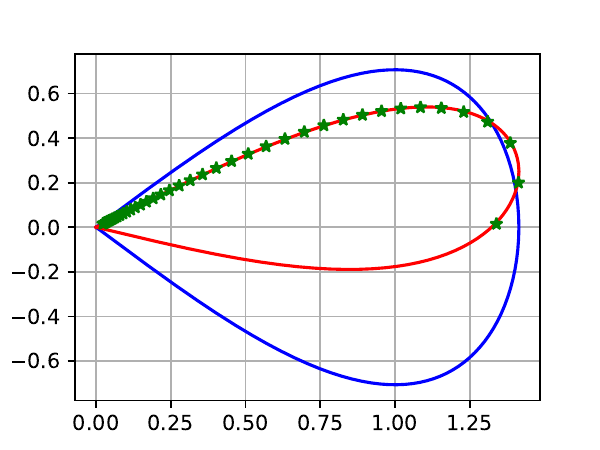}
    \caption{Plot of $(u_c(-L),u_c'(-L))$ (green) $\mathcal E_\omega$-curve of level $0$ (blue) and $\Gamma_\omega$-curve of level $\gamma$ (red); with $\omega = 1$, $\gamma = 1/4$ and $L \in [0.02, 5]$.}
    \label{figGS3}
\end{figure}

We also performed numerical experiments with parameters $\gamma$ such that $\sqrt{\omega / 2} \leq \gamma < \sqrt{\omega}$. We observed that, for $L$ large enough, a numerical action ground state exists and its shape is determined by $L$ the way described above. 

The repository \cite{Du25github} contains the codes used to perform the numerical computations and generate the Figures \ref{figGS1}, \ref{figGS2} and \ref{figGS3}.

\appendix

\section{Spectrum of the Hamiltonian operator} \label{secSpectrum}

The aim of this section is to prove Proposition \ref{propSpectrum}. In order to enhance clarity, the proof is divided in several lemmas.

As the operator $\operatorname{H}_\gamma$ is self-adjoint, we have $\sigma (\operatorname{H}_\gamma) \subset \mathbb R$. In order to compute it, we study the ordinary differential equation
\begin{equation} \label{eqSpectrum}
    -v'' - \lambda v = 0
\end{equation}
on $\mathcal{G}_L$, where $\lambda \in \mathbb R$. For $\lambda \neq 0$, solutions of \eqref{eqSpectrum} are given by $v = (v_c, v_h)$ of the form
\begin{equation} \label{eqSpectrumSol}
    \begin{aligned}
        v_c (x) & = A_c e^{i\sqrt{\lambda}x} + B_c e^{-i\sqrt{\lambda}x} & \text{ for } x & \in [-L,L], \\
        v_h (x) & = A_h e^{i\sqrt{\lambda}x} + B_h e^{-i\sqrt{\lambda}x} & \text{ for } x &\in [0, \infty),
    \end{aligned}
\end{equation}
where $A_c, B_c, A_h$ and $B_h \in \C$. In order to study the spectrum of $\operatorname{H}_\gamma$, we will consider functions $v$ of the form \eqref{eqSpectrumSol} which satisfy the continuity at the vertex
\begin{equation} \label{eqVertexCont}
    v_c(L) = v_c(-L) = v_h(0)
\end{equation}
and the jump condition on the derivatives
\begin{equation} \label{eqDiffJump}
    v_c'(-L) - v_c'(L) + v_h'(0) = \gamma v(\operatorname{v}).
\end{equation}
Let $v$ of the form \eqref{eqSpectrumSol} be such a function. It must satisfy
\begin{equation} \label{eqEssSpecCases}
    A_c \left( e^{i \sqrt{\lambda} L} - e^{-i \sqrt{\lambda} L} \right) = B_c \left( e^{i \sqrt{\lambda} L} - e^{-i \sqrt{\lambda} L} \right)
\end{equation}
by the first equality in \eqref{eqVertexCont}. The condition \eqref{eqEssSpecCases} shows that we have to distinguish two cases depending whether or not $\sqrt{\lambda}L$ is a multiple of $\pi$.

\begin{lemma} \label{lemMultPi}
    Assume that $\sqrt{\lambda}L = k \pi$ for some $k \in \mathbb N^*$. Then, $\lambda$ is an eigenvalue of $\operatorname{H}_\gamma$.
\end{lemma}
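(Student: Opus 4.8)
The plan is to exhibit an explicit non-trivial eigenfunction associated with $\lambda$ by placing a standing wave entirely on the loop and leaving the half-line empty. The crucial observation is that when $\sqrt{\lambda}L = k\pi$ the continuity constraint \eqref{eqEssSpecCases} degenerates into the trivial identity $0 = 0$ (since then $e^{i\sqrt{\lambda}L} - e^{-i\sqrt{\lambda}L} = 0$), so the coefficients $A_c, B_c$ on the loop become free and can be tuned to produce a function vanishing at the vertex. Once the vertex value is zero, the $\delta$-condition \eqref{eqDiffJump} reduces to a pure derivative balance that will hold automatically by symmetry, for every value of $\gamma$.

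Concretely, I would set $\lambda = (k\pi / L)^2 > 0$ and take $v = (v_c, v_h)$ with $v_h \equiv 0$ and
\[
    v_c(x) = \sin \left( \sqrt{\lambda}(x + L) \right), \quad x \in [-L, L],
\]
which is of the form \eqref{eqSpectrumSol} with $A_h = B_h = 0$ and $A_c, B_c$ chosen accordingly. First I would check the continuity \eqref{eqVertexCont}: since $v_c(-L) = \sin(0) = 0$ and $v_c(L) = \sin(2k\pi) = 0$, we obtain $v_c(-L) = v_c(L) = v_h(0) = 0$, so the common vertex value is $v(\operatorname{v}) = 0$. Next I would verify \eqref{eqDiffJump}: from $v_c'(x) = \sqrt{\lambda} \cos(\sqrt{\lambda}(x + L))$ we get $v_c'(-L) = \sqrt{\lambda}$ and $v_c'(L) = \sqrt{\lambda} \cos(2k\pi) = \sqrt{\lambda}$, while $v_h'(0) = 0$; hence $v_c'(-L) - v_c'(L) + v_h'(0) = 0 = \gamma\, v(\operatorname{v})$, so the $\delta$-condition holds (and, since the two one-sided derivatives on the loop coincide, the verification is insensitive to the orientation convention in the jump relation).

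It then remains to note that $v$ is non-trivial, as the sine does not vanish identically on $[-L, L]$, and that $v \in H^2(\mathcal{G}_L)$, since $v_c$ is smooth on the compact edge and $v_h \equiv 0$; together with the two vertex relations just checked, this gives $v \in D(\operatorname{H}_\gamma)$. Because $-v_c'' = \lambda v_c$ and $-v_h'' = 0 = \lambda v_h$, we conclude $\operatorname{H}_\gamma v = \lambda v$, so $\lambda$ is an eigenvalue. I do not expect any genuine obstacle here beyond the initial insight that the degeneracy of \eqref{eqEssSpecCases} at $\sqrt{\lambda}L = k\pi$ permits a loop-localized state decoupled from the half-line; once that is recognized, the remaining steps are immediate sign and parity computations.
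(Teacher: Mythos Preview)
Your proof is correct and follows essentially the same approach as the paper: both construct a loop-localized sine eigenfunction vanishing at the vertex with $v_h \equiv 0$. The paper sets $A_h = 0$, $A_c = 1$ in the general form to obtain $v_c(x) = 2i\sin(\sqrt{\lambda}x)$, which differs from your $\sin(\sqrt{\lambda}(x+L))$ only by a constant multiple (indeed $\sin(\sqrt{\lambda}(x+L)) = (-1)^k \sin(\sqrt{\lambda}x)$ when $\sqrt{\lambda}L = k\pi$), and your verification of the vertex conditions is slightly more explicit.
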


\begin{proof}
    Firstly, observe that $\lambda \geq 0$. Let $v$ be of the form \eqref{eqSpectrumSol} satisfying the condition \eqref{eqVertexCont} and \eqref{eqDiffJump}. Direct computations show that we have
    \[
        A_h + B_h = ( A_c + B_c ) e^{i k \pi}
    \]
    in order to satisfy the second equality in \eqref{eqVertexCont} and
    \[
        A_h \left( \gamma - i \sqrt{\lambda} \right) + B_h \left( \gamma + i \sqrt{\lambda} \right) = 0
    \]
    in order to satisfy \eqref{eqDiffJump}. The function $v = (v_c, v_h)$ is then given by
    \begin{equation*}
        \begin{aligned}
            v_c (x) & = A_c e^{i \sqrt{\lambda} x} + \left( -A_c + A_h e^{i k \pi} \left( 1 + \frac{i \sqrt{\lambda} - \gamma}{i \sqrt{\lambda} + \gamma} \right) \right) e^{-i \sqrt{\lambda} x} & \text{ for } x & \in [-L, L], \\
            v_h (x) & = A_h e^{i \sqrt{\lambda} x} + A_h \frac{i \sqrt{\lambda} - \gamma}{i \sqrt{\lambda} + \gamma} e^{-i \sqrt{\lambda} x} & \text{ for } x & \in [0, \infty).
        \end{aligned}
    \end{equation*}
    Setting $A_h = 0$ and $A_c = 1$, the $v$ we obtain belongs to $D(\operatorname{H}_\gamma)$, as $v(\operatorname{v}) = 0$. Hence, $\lambda \in \sigma (\operatorname{H}_\gamma)$.
\end{proof}

Unless otherwise specified, $\sqrt{\lambda} L$ is now assumed not to be a multiple of $\pi$. Let $v$ be the form \eqref{eqSpectrumSol} satisfying \eqref{eqVertexCont} and \eqref{eqDiffJump}. The first equality in \eqref{eqVertexCont} conditions then imposes
\[
    A_c = B_c,
\]
whereas the second equality in \eqref{eqVertexCont}, and \eqref{eqDiffJump} lead to that
\begin{align*}
    A_h + B_h & = 2 A_c \cos \left( \sqrt{\lambda} L \right), \\
    A_h - B_h & = 2 \gamma A_c \cos \left( \sqrt{\lambda} L \right) + 4 i A_c \sin \left( \sqrt{\lambda} L \right).
\end{align*}
The function $v$ is now given by
\begin{equation} \label{eqEigFunc}
    \begin{aligned}
        v_c(x) & = 2 A_c \cos \left( \sqrt{\lambda} x \right) & \text{ for } x & \in [-L, L], \\
        v_h(x) & = A_c \left( \cos \left( \sqrt{\lambda}L \right) \left( 1 + \frac{\gamma}{i\sqrt{\lambda}} \right) + 2 i \sin \left( \sqrt{\lambda}L \right) \right) e^{i\sqrt{\lambda} x} \\
        &\quad + A_c \left( \cos \left( \sqrt{\lambda}L \right) \left( 1 - \frac{\gamma}{i\sqrt{\lambda}} \right) - 2 i \sin \left( \sqrt{\lambda}L \right) \right) e^{-i \sqrt{\lambda}  x} & \text{ for } x & \in [0, \infty).
    \end{aligned}
\end{equation}

\begin{lemma} \label{lemPosLamb}
    Assume that $\lambda > 0$. Then, $\lambda$ belongs to $\sigma_{\operatorname{ess}}(\operatorname{H}_\gamma)$.
\end{lemma}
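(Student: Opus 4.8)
The plan is to exhibit a singular Weyl sequence for $\operatorname{H}_\gamma$ at the level $\lambda$, that is, a sequence $(w_n)_{n\in\mathbb N}\subset D(\operatorname{H}_\gamma)$ with $\|w_n\|_{L^2(\mathcal{G}_L)}=1$, $w_n\rightharpoonup 0$ weakly in $L^2(\mathcal{G}_L)$, and $\|(\operatorname{H}_\gamma-\lambda)w_n\|_{L^2(\mathcal{G}_L)}\to 0$. By Weyl's criterion for self-adjoint operators, the existence of such a sequence forces $\lambda\in\sigma_{\operatorname{ess}}(\operatorname{H}_\gamma)$. Since the essential spectrum is governed by the behavior at infinity, it is natural to build $w_n$ supported on the half-line $h$, far from the vertex, so that the loop $c$ and the vertex conditions play no role.

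Concretely, I would set $k=\sqrt{\lambda}>0$ and fix a profile $\psi\in C_c^\infty((0,\infty))$, supported say in $(1,2)$, with $\|\psi\|_{L^2(0,\infty)}=1$. I then define $w_n=(0,w_{n,h})$ by
\[
    w_{n,h}(x)=\frac{1}{\sqrt n}\,\psi\!\left(\frac{x}{n}\right)e^{ikx},\qquad x\in[0,\infty),
\]
and $w_{n,c}\equiv 0$ on $[-L,L]$. A change of variables gives $\|w_n\|_{L^2(\mathcal{G}_L)}=\|\psi\|_{L^2(0,\infty)}=1$. The crucial computation is that, because $(ik)^2=-\lambda$, the leading order term cancels when applying $-\partial_{xx}-\lambda$:
\[
    \left( -\partial_{xx}-\lambda \right) w_{n,h}(x)=\frac{1}{\sqrt n}\left(-\frac{1}{n^2}\psi''\!\left(\frac xn\right)-\frac{2ik}{n}\psi'\!\left(\frac xn\right)\right)e^{ikx},
\]
so that, after rescaling,
\[
    \left\| \left( \operatorname{H}_\gamma-\lambda \right) w_n \right\|_{L^2(\mathcal{G}_L)}^2 = \int_0^\infty\left|\frac{1}{n^2}\psi''(y)+\frac{2ik}{n}\psi'(y)\right|^2 dy \leq \frac{C}{n^2}\left(\|\psi'\|_{L^2}^2+\frac{1}{n^2}\|\psi''\|_{L^2}^2\right),
\]
which tends to $0$ as $n\to\infty$ (the constant $C$ depending only on $k$).

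It remains to check the two structural properties. For $n\geq 1$ the support of $w_{n,h}$ lies in $(n,2n)$, hence away from the vertex, so $w_n$ vanishes identically near $\operatorname{v}$; the continuity condition and the jump condition in $D(\operatorname{H}_\gamma)$ are then trivially satisfied (both sides vanish), and since $\psi\in C_c^\infty$ we have $w_n\in H^2(\mathcal{G}_L)$, whence $w_n\in D(\operatorname{H}_\gamma)$. Finally, for any $f\in L^2(\mathcal{G}_L)$ one has $|(w_n,f)_{L^2(\mathcal{G}_L)}|\leq \|w_{n,h}\|_{L^2}\,\|f_h\|_{L^2(n,2n)}=\|f_h\|_{L^2(n,2n)}\to 0$, so that $w_n\rightharpoonup 0$; in particular the normalized sequence $(w_n)$ admits no strongly convergent subsequence, and it is therefore genuinely singular. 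Weyl's criterion then yields $\lambda\in\sigma_{\operatorname{ess}}(\operatorname{H}_\gamma)$. I do not expect a serious obstacle here: the only point requiring attention is the cancellation of the $O(1)$ term, which is precisely the reason for the modulation $e^{ikx}$ with $k=\sqrt{\lambda}$; everything else is routine scaling and truncation.
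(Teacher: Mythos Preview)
Your proof is correct and follows the standard ``moving bump'' construction: a modulated, rescaled profile supported on $(n,2n)\subset h$, far from the vertex, so that membership in $D(\operatorname{H}_\gamma)$ is automatic and the cancellation $k^2=\lambda$ kills the leading term. The computations are accurate and the weak convergence argument via $\|f_h\|_{L^2(n,2n)}\to 0$ is clean.

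The paper takes a slightly different route. Instead of an arbitrary profile $\psi$, it starts from the actual generalized eigenfunction $v$ of the form \eqref{eqEigFunc} (which already satisfies the continuity and jump conditions at the vertex), truncates $v_h$ by $\eta(\cdot/n)$ with a smooth cutoff $\eta$, and then normalizes. Because the paper's sequence does not vanish near the vertex, it must rely on $\|v_n\|_{L^2}\to\infty$ to obtain weak convergence to zero, and it needs the explicit form \eqref{eqEigFunc}; this forces a separate treatment of the case $\sqrt{\lambda}L\in\pi\mathbb N$, which is handled via Lemma~\ref{lemMultPi} and a closure argument. Your construction is more elementary and uniform in $\lambda>0$, avoiding that case split entirely; the paper's approach, on the other hand, makes explicit contact with the generalized eigenfunctions of $\operatorname{H}_\gamma$, which is perhaps more informative about the spectral structure even if it costs a bit more bookkeeping.
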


\begin{proof}
    Let $\lambda > 0$ such that $\lambda$ is not a multiple a $\pi$. Let $v$ be a function of the form \eqref{eqEigFunc}. We consider a function $\eta \in C^\infty(\mathbb R^+)$ which satisfies
    \begin{align*}
        \eta (x) & =
        \left\{
        \begin{aligned}
            1 & \text{ for } x \in [0, 1], \\
            0 & \text{ for } x \in [2, \infty),
        \end{aligned}
        \right.
    \end{align*}
    and $\eta$ is decreasing for $x \in (1, 2)$. We consider the sequence $(v_n = (v_{n, c}, v_{n, h}))_{n \in \mathbb N}$ defined by
    \begin{equation*}
        \begin{aligned}
            v_{n, c} (x) & = v_c (x) & \text{ for } x & \in [-L, L], \\
            v_{n, h} (x) & = \eta \left( \frac{x}{n} \right) v_h (x) & \text{ for } x & \in [0, \infty).
        \end{aligned}
    \end{equation*}
    We have
    \[
        \| v_n \|_{L^2(\mathcal{G}_L)} \to \infty \text{ as } n \to \infty,
    \]
    so we introduce the sequence $(\tilde{v}_n)_{n \in \mathbb N}$ defined by
    \[
        \tilde{v}_n = \frac{v_n}{\| v_n \|_{L^2(\mathcal{G}_L)}}
    \]
    for $n \in \mathbb N$, so that
    \begin{equation} \label{eqWeyl1}
        \left\| \tilde{v}_n \right\|_{L^2(\mathcal{G}_L)} = 1.
    \end{equation}
    Let $\phi \in L^2(\mathcal{G}_L)$, we have
    \[
        (\phi, \tilde{v}_n)_{L^2(\mathcal{G}_L)} \leq \frac{\| \phi \|_{L^2(\mathcal{G}_L)} \| \eta v \|_{L^\infty(\mathcal{G}_L)}}{\| v_n \|_{L^2(\mathcal{G}_L)}}\to 0 \text{ as } n \to \infty,
    \]
    so
    \begin{equation} \label{eqWeyl2}
        \tilde{v}_n \rightharpoonup 0 \text{ weakly in $L^2(\mathcal{G}_L)$ as $n \to \infty$}.
    \end{equation}
    Furthermore, for $n \in \mathbb N$, we have
    \[
        \tilde{v}_{n,h}'' (x) = \frac{1}{\| v_n \|_{L^2(\mathcal{G}_L)}} \left( v_h''(x) \eta \left( \frac{x}{n} \right) + \frac{2 \eta' \left( \frac{x}{n} \right) v_h'(x)}{n} + \frac{\eta'' \left( \frac{x}{n} \right) v_h(x)}{n^2} \right)
    \]
    for $x \in [-L, L]$. As $\eta' v_c'$ and $\eta'' v_c$ are bounded, $\eta' \left( \frac{\cdot}{n} \right) v_c'$ and $\eta'' \left( \frac{\cdot}{n} \right) v_c$ are supported in $[0, 2n]$, and the fact that
    \[
        \| v_{n, h} \|_{L^2(\mathbb R^+)} \to \infty \text{ as } n \to \infty,
    \]
    we got that
    \[
        \frac{2}{n} \left\| \eta' \left( \frac{\cdot}{n} \right) v_h' \right\|_{L^2(\mathbb R^+)}
        \leq \frac{2 \| \eta' v_h' \|_{L^\infty(\mathbb R^+)}}{\| v_n \|_{L^2(\mathcal{G}_L)}}
        \to 0
    \]
    and
    \[
        \frac{1}{n^2} \left\| \eta'' \left( \frac{\cdot}{n} \right) v_h \right\|_{L^2(\mathbb R^+)}
        \leq \frac{2 \| \eta'' v_h \|_{L^\infty(\mathbb R^+)}}{n \| v_n \|_{L^2(\mathcal{G}_L)}}
        \to 0
    \]
    as $n \to \infty$. Then,
    \begin{equation} \label{eqWeyl3}
        \operatorname{H}_\gamma \tilde{v}_n - \lambda \tilde{v}_n \to 0 \text{ strongly in $L^2(\mathcal{G}_L)$ as $n \to \infty$.}
    \end{equation}
    As \eqref{eqWeyl1}, \eqref{eqWeyl2} and \eqref{eqWeyl3} hold, we conclude that $\lambda \in \sigma_{\operatorname{ess}} (\operatorname{H}_\gamma)$ by Weyl's criterion.

    Finally, let $\lambda$ such that $\sqrt{\lambda}L = k \pi$ for some $k \in \mathbb N^*$. By Lemma \ref{lemMultPi}, $\lambda$ is an eigenvalue of $\operatorname{H}_\gamma$. As it is not isolated, we have that $\lambda \in \sigma_{\operatorname{ess}}(\operatorname{H}_\gamma)$.
\end{proof}

We will now consider the case where $\lambda < 0$. Observe then that $\sqrt{\lambda} = i\sqrt{|\lambda|}$. We first determine the discrete spectrum of $\operatorname{H}_\gamma$.

\begin{lemma} \label{lemEigvals}
    The operator $\operatorname{H}_\gamma$ has a negative eigenvalue if and only if $\gamma < 0$. In this case, this eigenvalue is given by $\lambda_\gamma$, where $\lambda_\gamma$ has been defined in Proposition \ref{propSpectrum}.
\end{lemma}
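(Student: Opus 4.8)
The plan is to work directly in the case $\lambda<0$, where $\sqrt\lambda=i\sqrt{|\lambda|}$ is purely imaginary, so that $\sqrt\lambda L$ is not a (real) multiple of $\pi$ and any solution of \eqref{eqSpectrum} satisfying \eqref{eqVertexCont} and \eqref{eqDiffJump} has the form \eqref{eqEigFunc}. The one new ingredient beyond the essential-spectrum analysis is the requirement that an eigenfunction lie in $L^2(\mathcal G_L)$. On the half-line the two exponentials appearing in \eqref{eqEigFunc} become $e^{i\sqrt\lambda x}=e^{-\sqrt{|\lambda|}x}$ (decaying) and $e^{-i\sqrt\lambda x}=e^{\sqrt{|\lambda|}x}$ (growing), so $v_h\in L^2(0,\infty)$ forces the coefficient of the growing exponential in \eqref{eqEigFunc} to vanish.

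Next I would substitute $\cos(\sqrt\lambda L)=\cosh(\sqrt{|\lambda|}L)$, $\sin(\sqrt\lambda L)=i\sinh(\sqrt{|\lambda|}L)$ and $\gamma/(i\sqrt\lambda)=-\gamma/\sqrt{|\lambda|}$ into that coefficient. Assuming $A_c\neq0$ (otherwise $v\equiv0$) and dividing by $A_c\cosh(\sqrt{|\lambda|}L)$, the vanishing condition becomes $1+\gamma/\sqrt{|\lambda|}+2\tanh(\sqrt{|\lambda|}L)=0$, which after multiplication by $\sqrt{|\lambda|}$ is exactly equation \eqref{eqNegEigVal}. Conversely, any $\lambda<0$ solving \eqref{eqNegEigVal} yields, via \eqref{eqEigFunc}, a nontrivial $L^2$ function in $D(\operatorname H_\gamma)$, so the negative eigenvalues of $\operatorname H_\gamma$ are in bijection with the negative roots of \eqref{eqNegEigVal}.

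It then remains to analyse \eqref{eqNegEigVal}. Setting $t=\sqrt{|\lambda|}>0$, I would study $f(t)=-t\bigl(2\tanh(tL)+1\bigr)$. Since $\tanh(tL)\in(0,1)$ for $t>0$ we have $f(t)<0$, while $f(t)\to0$ as $t\to0^+$ and $f(t)\to-\infty$ as $t\to\infty$; differentiating gives $f'(t)=-\bigl(2\tanh(tL)+1\bigr)-2tL\operatorname{sech}^2(tL)<0$, so $f$ is a strictly decreasing continuous bijection from $(0,\infty)$ onto $(-\infty,0)$. Hence \eqref{eqNegEigVal} has a solution $t>0$ if and only if $\gamma<0$, and then it is unique; this produces the unique negative eigenvalue $\lambda_\gamma=-t^2$ when $\gamma<0$ and none when $\gamma\geq0$, which proves the lemma.

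The computation is essentially routine; the only step requiring genuine care is the last one, namely establishing that $f$ maps $(0,\infty)$ onto exactly $(-\infty,0)$ in a strictly monotone fashion, since this is what simultaneously delivers both existence (for every $\gamma<0$) and uniqueness of the eigenvalue, and rules out negative eigenvalues when $\gamma\geq0$.
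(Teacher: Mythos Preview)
Your proof is correct and follows essentially the same route as the paper: impose $v_h\in L^2(0,\infty)$ on the form \eqref{eqEigFunc}, read off the vanishing of the coefficient of the growing exponential, and identify the resulting condition with \eqref{eqNegEigVal}. The only difference is that you spell out the monotonicity analysis of $f(t)=-t(2\tanh(tL)+1)$ to justify existence and uniqueness of the root, whereas the paper simply asserts this; your version is therefore slightly more complete on that point.
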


\begin{proof}
    Let $\lambda < 0$ and $v$ be a function of the form \eqref{eqEigFunc}. For $\lambda$ to be an eigenvalue, $v$ must belong to $H^2(\mathcal{G}_L)$. In particular, $v_h$ must belong to $L^2(\mathbb R^+)$, which implies that
    \[
        \cos \left( \sqrt{\lambda}L \right) \left( 1 - \frac{\gamma}{i\sqrt{\lambda}} \right) - 2 i \sin \left( \sqrt{\lambda}L \right) = 0.
    \]
    This equation is equivalent to \eqref{eqNegEigVal}. It has a solution $\lambda_\gamma$, which is unique and negative, if and only if $\gamma < 0$. In this case, $\lambda_\gamma$ is an eigenvalue, of geometric multiplicity $1$. By self-adjointness of $\operatorname{H}_\gamma$, its algebraic multiplicity is also $1$, so $\lambda_\gamma \in \sigma_{\operatorname{dis}}(\operatorname{H}_\gamma)$.
\end{proof}

Finally, we exclude from the spectrum the case where $\lambda < 0$, and $\lambda \neq \lambda_\gamma$ if $\gamma < 0$. Our proof is based on direct calculations for the Green's function. For a proof based on Krein's formula, see \cite[Section 5]{Ex97}.

\begin{lemma} \label{lemNegLambda}
    Assume $\lambda < 0$ and $\lambda \neq \lambda_\gamma$ if $\gamma < 0$. Then, $\lambda$ belongs to the resolvent set $\rho(\operatorname{H}_\gamma)$. Furthermore, let $f \in L^2(\mathcal{G}_L)$, $u \in D(\operatorname{H}_\gamma)$ the unique solution of
    \begin{equation} \label{eqResolventODE}
        \operatorname{H}_\gamma u - \lambda u = f,
    \end{equation}
    and $G: \mathcal{G}_L \times \mathcal{G}_L \to \C^2$ be the Green's function associated with the equation. For $\xi \in \mathcal{G}_L$ fixed, the function $G(\cdot, \xi): \mathcal{G}_L \to \C$ is written as $(G_c (\cdot, \xi), G_h(\cdot, \xi))$ and is given by
    \begin{equation} \label{eqGreenC}
        G_c (x, \xi) =
        \begin{cases}
            A(\xi) e^{\sqrt{|\lambda|}x} + B(\xi) e^{-\sqrt{|\lambda|}x}
            & \text{ if } \xi \in c, -L \leq x < \xi \\
            & \text{ or } \xi \in l, x \in c, \\
            A(\xi) e^{\sqrt{|\lambda|}x} + B(\xi) e^{-\sqrt{|\lambda|}x} + \frac{1}{\sqrt{|\lambda|}} \sinh( \sqrt{|\lambda|} (\xi - x) )
            & \text{ if } \xi \in c,  \xi < x \leq L,
        \end{cases}
    \end{equation}
    and
    \begin{equation} \label{eqGreenL}
        G_h (x, \xi) =
        \begin{cases}
            C(\xi) e^{\sqrt{|\lambda|}x} + D(\xi) e^{-\sqrt{|\lambda|}x} + \frac{1}{\sqrt{|\lambda|}} \sinh( \sqrt{|\lambda|} (\xi - x) )
            & \text{ if } \xi \in l, \xi < x < \infty \\
            & \text{ or } \xi \in c, x \in l, \\
            C(\xi) e^{\sqrt{|\lambda|}x} + D(\xi) e^{-\sqrt{|\lambda|}x}
            & \text{ if } \xi \in l,  0 \leq x < \xi,
        \end{cases}
    \end{equation}
    where
    \begin{equation} \label{eqGreenCoeff}
        \begin{aligned}
            A(\xi) = B(\xi) = \frac{e^{-\sqrt{|\lambda|} \xi}}{2 \sqrt{|\lambda|} \cosh \left( \sqrt{\lambda} L \right)} \left( 1 - \frac{\gamma}{\gamma + 2 \sqrt{|\lambda|} \left( \tanh \left( \sqrt{|\lambda|} L \right) + 1 \right)} \right), \\
            C(\xi) = \frac{e^{-\sqrt{|\lambda|} \xi}}{2\sqrt{|\lambda|}},
            \quad D(\xi) = \frac{e^{-\sqrt{|\lambda|} \xi}}{2\sqrt{|\lambda|}} \left( 1 - \frac{2 \gamma}{\gamma + 2 \sqrt{|\lambda|} \left( \tanh \left( \sqrt{|\lambda|} L \right) + 1 \right)} \right),
        \end{aligned}
    \end{equation}
    for $\xi \in \mathcal{G}_L$. The solution $u$ is given by
    \begin{equation} \label{eqGreenSol}
        \begin{aligned}
            u_c (x) & = \int_{\mathcal{G}_L} G_c (x, \xi) f (\xi) d\xi & \text{ for } x & \in [-L, L], \\
            u_h (x) & = \int_{\mathcal{G}_L} G_h (x, \xi) f (\xi) d\xi & \text{ for } x & \in [0, \infty).
        \end{aligned}
    \end{equation}
\end{lemma}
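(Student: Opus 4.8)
The plan is to establish both assertions at once—that $\lambda\in\rho(\operatorname{H}_\gamma)$ and that the resolvent has the explicit kernel \eqref{eqGreenC}--\eqref{eqGreenCoeff}—by constructing the Green's function by hand and verifying that it solves \eqref{eqResolventODE}. Uniqueness comes essentially for free: $\operatorname{H}_\gamma$ is self-adjoint, hence closed, and by Lemma \ref{lemEigvals} its only possible negative eigenvalue is $\lambda_\gamma$ (which occurs only for $\gamma<0$); the standing hypothesis $\lambda\neq\lambda_\gamma$ therefore makes $\operatorname{H}_\gamma-\lambda$ injective on $D(\operatorname{H}_\gamma)$. Consequently it suffices to exhibit, for every $f\in L^2(\mathcal{G}_L)$, a solution $u\in D(\operatorname{H}_\gamma)$ of \eqref{eqResolventODE}: bijectivity of the closed operator $\operatorname{H}_\gamma-\lambda$ then forces boundedness of its inverse by the closed graph theorem, so that $\lambda\in\rho(\operatorname{H}_\gamma)$.

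To build $G$, I would fix a source point $\xi$ and seek $G(\cdot,\xi)$ solving the homogeneous equation $-\partial_{xx}w+|\lambda|w=0$ on each edge away from $\xi$ and from the vertex, so that on every piece $w$ is a combination of $e^{\pm\sqrt{|\lambda|}x}$ with $\xi$-dependent coefficients. The defining properties of a Green's function then impose three requirements. First, continuity of $G(\cdot,\xi)$ at the interior source together with a unit jump of its first derivative there; this is exactly encoded by the particular term $\frac{1}{\sqrt{|\lambda|}}\sinh(\sqrt{|\lambda|}(\xi-x))$ that appears in \eqref{eqGreenC}--\eqref{eqGreenL}. Second, square-integrability of $G_h(\cdot,\xi)$ as $x\to\infty$, which kills the coefficient of the growing exponential $e^{\sqrt{|\lambda|}x}$ in the far field and pins down $C(\xi)=e^{-\sqrt{|\lambda|}\xi}/(2\sqrt{|\lambda|})$. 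Third, the vertex conditions from $D(\operatorname{H}_\gamma)$, namely continuity $G_c(-L,\xi)=G_c(L,\xi)=G_h(0,\xi)$ and the $\delta$-jump \eqref{eqDiffJump} of the derivatives. When the source lies on the half-line, gluing the two loop endpoints to the common vertex forces the loop component to be even, i.e.\ $A(\xi)=B(\xi)$; in all cases, imposing these conditions closes a small linear system for the remaining coefficients, solved by \eqref{eqGreenCoeff}.

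The crux is the solvability of this vertex system. Its determinant is, up to harmless nonvanishing factors, a multiple of the eigenvalue expression appearing in \eqref{eqNegEigVal}, so that the denominators in \eqref{eqGreenCoeff} vanish precisely at $\lambda=\lambda_\gamma$; under the hypothesis $\lambda\neq\lambda_\gamma$ the coefficients $A,B,C,D$ are therefore well defined and \eqref{eqGreenCoeff} is justified. Once $G$ is in hand I would set $u=\int_{\mathcal{G}_L}G(\cdot,\xi)f(\xi)\,d\xi$ as in \eqref{eqGreenSol} and check directly that it is the sought solution: differentiating under the integral gives $-u''+|\lambda|u=f$ on each edge, the matching built into $G$ yields $u\in H^2(\mathcal{G}_L)$ together with the continuity and $\delta$-vertex conditions so that $u\in D(\operatorname{H}_\gamma)$, and a final substitution confirms \eqref{eqResolventODE}. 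Boundedness of the integral operator on $L^2(\mathcal{G}_L)$ would follow from the exponential decay of the kernel (via Schur's test), though, as noted above, it is not even needed to conclude.

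I expect the main obstacle to be the bookkeeping inherent in the piecewise construction: the formulas \eqref{eqGreenC}--\eqref{eqGreenL} distinguish whether the source $\xi$ sits on the loop $c$ or on the half-line $h$, and, within each case, where the observation point $x$ lies relative to $\xi$, so assembling consistent matching at both the interior source and the three-fold vertex—while respecting the self-adjoint $\delta$-condition \eqref{eqDiffJump}—demands care. The conceptual heart, by contrast, is simply the identification of the determinant of the vertex system with the eigenvalue function of \eqref{eqNegEigVal}, which is exactly what ties the resolvent construction to the spectral hypothesis $\lambda\neq\lambda_\gamma$.
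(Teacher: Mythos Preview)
Your proposal is correct and follows essentially the same route as the paper: construct $G(\cdot,\xi)$ edgewise from $e^{\pm\sqrt{|\lambda|}x}$, encode the source via the $\sinh$ term, kill the growing mode on $h$ to fix $C(\xi)$, and solve the vertex system for $A=B$ and $D$, noting that the denominator in \eqref{eqGreenCoeff} vanishes exactly at $\lambda_\gamma$. The only difference is cosmetic: the paper obtains boundedness of $(\operatorname{H}_\gamma-\lambda)^{-1}$ by a direct Cauchy--Schwarz estimate on the kernel, whereas you invoke the closed graph theorem (injectivity from Lemma~\ref{lemEigvals} plus surjectivity from the Green's function); both are valid and neither is materially shorter.
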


\begin{proof}
    Let $\lambda < 0$ and $\lambda \neq \lambda_\gamma$ if $\gamma < 0$. We will show that the resolvent operator $(\operatorname{H}_\gamma - \lambda \operatorname{Id})^{-1}$ is well-defined and bounded on $L^2(\mathcal{G}_L)$. To do so, we first compute the Green's function associated to the operator $(\operatorname{H}_\gamma - \lambda \operatorname{Id})$. We consider the following problem:
    \begin{align}
        & \frac{d^2}{dx^2} G_c (x, \xi) + \lambda G_c (x, \xi) =
        \begin{cases}
            \delta(x - \xi) & \text{ if } \xi \in c, \\
            0 & \text{ if } \xi \in l,
        \end{cases}
        & \text{ for } x & \in [-L,L], \label{eqGreenCircle} \\
        & \frac{d^2}{dx^2} G_h (x, \xi) + \lambda G_h (x, \xi) =
        \begin{cases}
            0 & \text{ if } \xi \in c, \\
            \delta(x - \xi) & \text{ if } \xi \in l,
        \end{cases}
        & \text{ for } x & \in (0,\infty), \label{eqGreenLine} \\
        & G_c(-L, \xi) = G_c(L, \xi) = G_h(0, \xi) & \text{ for } \xi & \in \mathcal{G}_L, \label{eqGreenContinuity} \\
        & \frac{d}{dx} G_c(-L, \xi) - \frac{d}{dx} G_c(L, \xi) + \frac{d}{dx} G_h(0, \xi) = \gamma G(\operatorname{v}, \xi) & \text{ for } \xi & \in \mathcal{G}_L. \label{eqGreenJump}
    \end{align}
    A solution $G_c(\cdot, \xi)$ of \eqref{eqGreenCircle} is of the form
    \begin{equation*}
        G_c(x, \xi) =
        \begin{cases}
            A(\xi) e^{\sqrt{|\lambda|}x} + B(\xi) e^{-\sqrt{|\lambda|}x}
            & \text{ if } \xi \in c, -L \leq x < \xi \text{ or } \xi \in l, x \in [-L, L], \\
            \tilde{A}(\xi) e^{\sqrt{|\lambda|}x} + \tilde{B}(\xi) e^{-\sqrt{|\lambda|}x}
            & \text{ if } \xi \in c,  \xi < x \leq L.
        \end{cases}
    \end{equation*}
    Let $\xi \in c$, then $G_c(\cdot, \xi)$ satisfies
    \begin{align*}
        G_c(\xi^-, \xi) - G_c(\xi^+, \xi) & = 0, \\
        \frac{d}{dx} G_c (\xi^+, \xi) - \frac{d}{dx} G_c (\xi^-, \xi) & = 1,
    \end{align*}
    which imposes that
    \begin{align*}
        \tilde{A}(\xi) & = A(\xi) - \frac{e^{-\sqrt{|\lambda|} \xi}}{2 \sqrt{|\lambda|}}, \\
        \tilde{B}(\xi) & = B(\xi) + \frac{e^{\sqrt{|\lambda|} \xi}}{2 \sqrt{|\lambda|}}.
    \end{align*}
    Thus, for $\xi \in \mathcal{G}_L$, the solution $G_c (\cdot, \xi)$ is of the form \eqref{eqGreenC}. Similar computations show that the solution $G_h (\cdot, \xi)$ is of the form \eqref{eqGreenL}.

    Let $\xi \in \mathcal{G}_L$. For $G_h(\cdot, \xi)$ to be in $L^2(\mathbb R^+)$, it must satisfy
    \[
        C(\xi) - \frac{e^{-\sqrt{|\lambda|} \xi}}{2 \sqrt{|\lambda|}} = 0.
    \]
    Furthermore, the first equality in \eqref{eqGreenContinuity} shows that
    \[
        A(\xi) = B(\xi),
    \]
    while the second equality shows that
    \[
        2 A(\xi) \cosh(\sqrt{|\lambda|L}) = D(\xi) + \frac{e^{-\sqrt{|\lambda|} \xi}}{2 \sqrt{|\lambda|}}.
    \]
    Finally, the condition \eqref{eqGreenJump} imposes that
    \[
        D(\xi) \left( \frac{\gamma}{\sqrt{|\lambda|}} + 2 \tanh(\sqrt{|\lambda|}L) + 1 \right)
        = \frac{e^{-\sqrt{|\lambda|} \xi}}{2 \sqrt{|\lambda|}} \left( -\frac{\gamma}{\sqrt{|\lambda|}} + 2 \tanh(\sqrt{|\lambda|}L) + 1 \right).
    \]
    Observe that $D(\xi)$ is well defined, as $\lambda \neq \lambda_\gamma$ for $\gamma < 0$. Thus, coefficients of $G_c (\cdot, \xi)$ and $G_h (\cdot, \xi)$ are given by \eqref{eqGreenCoeff}.
    
    Let $f$ be in $L^2(\mathcal{G}_L)$. Then, the equation
    \[
        \operatorname{H}_\gamma u - \lambda u = f
    \]
    has a $u \in D(\operatorname{H}_\gamma)$ given by \eqref{eqGreenSol}, which is unique by properties of the Green's function. Furthermore, using Hölder inequality, we have 
    \[
        \| u \|_{L^2(\mathcal{G}_L)}^2
        \leq \left( \int_{-L}^L \int_{\mathcal{G}_L} G_c(x, \xi)^2 d\xi dx + \int_0^\infty \int_{\mathcal{G}_L} G_h(x, \xi)^2 d\xi dx \right) \| f \|_{L^2(\mathcal{G}_L)}^2
        \leq C \| f \|_{L^2(\mathcal{G}_L)}^2
    \]
    for some $C > 0$. Thus, the operator $(\operatorname{H}_\gamma - \lambda \operatorname{Id})^{-1}$ is well-defined and bounded on $L^2(\mathcal{G}_L)$, so $\lambda \in \rho(\operatorname{H}_\gamma)$.
\end{proof}

We are finally able to prove Proposition \ref{propSpectrum}.

\begin{proof}[Proof of Proposition \ref{propSpectrum}]
    By self-adjointness of the operator, the spectrum $\sigma (\operatorname{H}_\gamma)$ is included in $\mathbb R$. The only remaining case to address is then $\lambda = 0$. From Lemmas \ref{lemMultPi}, \ref{lemPosLamb}, \ref{lemEigvals} and \ref{lemNegLambda}, we have that $(0, \infty) \subset \sigma_{\operatorname{ess}} (\operatorname{H}_\gamma)$ and $\sigma_{\operatorname{ess}} (\operatorname{H}_\gamma) \subset \mathbb R^+$. By closure of the essential spectrum, we have that $0 \in \sigma_{\operatorname{ess}} (\operatorname{H}_\gamma)$. This concludes the proof.
\end{proof}

\bibliographystyle{abbrv}
\bibliography{bibliography}
  
\end{document}